\documentclass[10.5pt]{amsart}

\usepackage{subfiles}
\usepackage{euscript}
\usepackage{tabu}
\usepackage[margin=1in]{geometry}
\usepackage[dvipsnames]{xcolor} 	
\usepackage{verbatim}

\usepackage{graphicx}			
\usepackage{amssymb}
\usepackage{mathrsfs}
\usepackage{amsthm}
\usepackage{amsmath}
\usepackage{stmaryrd}

\usepackage{tikz}
\usepackage{tikz-cd}
\usetikzlibrary{arrows,arrows.meta}
\usetikzlibrary{arrows.meta,        
                decorations.pathmorphing,
                matrix}%
\tikzset{
  commutative diagrams/.cd, 
  arrow style=tikz, 
  diagrams={>=cm to}
}

\usepackage{accents}
\usepackage{upgreek}
\usepackage{enumerate}
\usepackage{bm}
\usepackage{mathtools}
\usepackage[all]{xy}
\usepackage{caption}

\usepackage{url}
\usepackage{float}
\usepackage{todonotes}
\usepackage{bbm}
\usepackage{colonequals}
\usepackage{longtable}

\usepackage[full]{textcomp}
\usepackage[cal=cm]{mathalfa}
\usepackage{xparse}
\usepackage{comment}
\usepackage{cite}

\usepackage[normalem]{ulem}

\theoremstyle{definition}
\newtheorem{innercustomthm}{Theorem}
\newenvironment{customthm}[1]
  {\renewcommand\theinnercustomthm{#1}\innercustomthm}
  {\endinnercustomthm}
  
\theoremstyle{definition}
\newtheorem{innercustomcor}{Corollary}
\newenvironment{customcor}[1]
  {\renewcommand\theinnercustomcor{#1}\innercustomcor}
  {\endinnercustomcor}
  
\theoremstyle{definition}

\makeatletter
\def\@tocline#1#2#3#4#5#6#7{\relax
  \ifnum #1>\c@tocdepth 
  \else
    \par \addpenalty\@secpenalty\addvspace{#2}%
    \begingroup \hyphenpenalty\@M
    \@ifempty{#4}{%
      \@tempdima\csname r@tocindent\number#1\endcsname\relax
    }{%
      \@tempdima#4\relax
    }%
    \parindent\z@ \leftskip#3\relax \advance\leftskip\@tempdima\relax
    \rightskip\@pnumwidth plus4em \parfillskip-\@pnumwidth
    #5\leavevmode\hskip-\@tempdima
      \ifcase #1
       \or\or \hskip 1em \or \hskip 2em \else \hskip 3em \fi%
      #6\nobreak\relax
    \dotfill\hbox to\@pnumwidth{\@tocpagenum{#7}}\par
    \nobreak
    \endgroup
  \fi}
\makeatother

\makeatletter
\DeclareRobustCommand{\cev}[1]{%
  \mathpalette\do@cev{#1}%
}
\newcommand{\do@cev}[2]{%
  \fix@cev{#1}{+}%
  \reflectbox{$\m@th#1\vec{\reflectbox{$\fix@cev{#1}{-}\m@th#1#2\fix@cev{#1}{+}$}}$}%
  \fix@cev{#1}{-}%
}
\newcommand{\fix@cev}[2]{%
  \ifx#1\displaystyle
    \mkern#23mu
  \else
    \ifx#1\textstyle
      \mkern#23mu
    \else
      \ifx#1\scriptstyle
        \mkern#22mu
      \else
        \mkern#22mu
      \fi
    \fi
  \fi
}

\makeatother

\makeatletter
\let\@wraptoccontribs\wraptoccontribs
\makeatother

\usetikzlibrary{calc}
\usetikzlibrary{fadings}
\usetikzlibrary{decorations.pathmorphing}
\usetikzlibrary{decorations.pathreplacing}

\newcounter{marginnote}
\DeclareMathAlphabet{\mathpzc}{OT1}{pzc}{m}{it}

\usepackage[backref=page]{hyperref}
\hypersetup{
  colorlinks   = true,          
  urlcolor     = blue,          
  linkcolor    = blue,          
  citecolor   = blue             
}
\theoremstyle{definition}
\newtheorem{theorem}{Theorem}[section]

\newtheorem{corollary}[theorem]{Corollary}
\newtheorem{lemma}[theorem]{Lemma}
\newtheorem{proposition}[theorem]{Proposition}
\newtheorem{remark}[theorem]{Remark}

\newtheorem*{runningexample*}{Running example}

\newtheorem*{aside*}{Aside}

\newtheorem{definition}[theorem]{Definition}
\newtheorem{example}[theorem]{Example}

\newtheorem{notation}[theorem]{Notation}
\newtheorem{proposition-definition}[theorem]{Proposition-Definition}

\DeclareMathOperator{\Pic}{Pic}

\newcommand{\Exp}{\operatorname{Exp}}

\newcommand{\Gm}{\mathbb{G}_{\operatorname{m}}}

\newcommand{\ol}[1]{\overline{#1}}

\newcommand{\bcd}{\begin{center}\begin{tikzcd}}
\newcommand{\ecd}{\end{tikzcd}\end{center}}

\newcommand{\G}{\mathbb{G}}
\newcommand{\PP}{\mathbb{P}}

\newcommand{\A}{\mathbb{A}}

\newcommand{\Mcal}{\mathcal{M}}

\newcommand{\Pcal}{\mathcal{P}}

\newcommand{\Mbar}{\ol{\Mcal}}

\newcommand{\M}{\mathcal{M}}

\newcommand{\calE}{\mathcal{E}}

\newcommand{\Spec}{\operatorname{Spec}}

\newcommand{\Hilb}{\mathrm{Hilb}}
\newcommand{\Sym}{\mathrm{Sym}}

\newcommand{\ra}{\rightarrow}
\newcommand{\CH}{\mathrm{CH}}
\newcommand{\CHs}{\mathrm{CH}^\star}

\newcommand{\cZ}{\mathcal{Z}}
\newcommand{\cE}{\mathcal{E}}
\newcommand{\cP}{\mathcal{P}}
\newcommand{\cX}{\mathcal{X}}
\newcommand{\cO}{\mathcal{O}}
\newcommand{\cY}{\mathcal{Y}}
\newcommand{\cN}{\mathcal{N}}
\newcommand{\cL}{\mathcal{L}}
\newcommand{\cA}{\mathcal{A}}
\newcommand{\cB}{\mathcal{B}}
\newcommand{\bi}{\boldsymbol{i}}
\newcommand{\bZ}{\mathbb{Z}}

\newcommand{\cZri}{\cZ}
\newcommand{\cEri}{\cE}
\newcommand{\piri}{\pi}
\newcommand{\Himor}{\cY}
\newcommand{\Hi}{\cX}

\NewDocumentCommand{\compatibilitydatum}{m m m m m m O{} O{} O{}}{
\begin{equation*} \begin{tikzcd}[ampersand replacement=\&]
  \: \arrow{r} \& {#1} \arrow{r} \arrow{d}{#7} \& {#2} \arrow{r} \arrow{d}{#8} \& {#3} \arrow{r}{[1]} \arrow{d}{#9} \& \: \\
  \: \arrow{r} \& {#4} \arrow{r} \& {#5} \arrow{r} \& {#6} \arrow{r} \& \:
\end{tikzcd} \end{equation*}}

\NewDocumentCommand{\commutingsquare}{m m m m o O{} O{} O{} O{}}{
\begin{equation}\begin{tikzcd}[ampersand replacement=\&] \label{#5}
  #1 \arrow{r}{#6} \arrow{d}{#7} \& #2 \arrow{d}{#8} \\
  #3 \arrow{r}{#9} \& #4
\end{tikzcd}\IfValueTF{#5}{\label{#5}}{} \end{equation}}

\NewDocumentCommand{\Cartesiansquare}{m m m m O{} O{} O{} O{}}{
\begin{equation*}\begin{tikzcd}[ampersand replacement=\&]
  #1 \arrow{r}{#5} \arrow{d}{#6} \arrow[dr, phantom, "\square"] \& #2 \arrow{d}{#7} \\
  #3 \arrow{r}{#8} \& #4
\end{tikzcd} \end{equation*}}

\NewDocumentCommand{\Cartesiansquarelabel}{m m m m m O{} O{} O{} O{}}{
\begin{tikzcd}[ampersand replacement=\&]
  #1 \arrow{r}{#6} \arrow{d}{#7} \arrow[dr, phantom, "\square"] \& #2 \arrow{d}{#8} \\
  #3 \arrow{r}{#9} \& #4
\end{tikzcd}\IfValueTF{#5}{\label{#5}}{}
}

\NewDocumentCommand{\triangleofspaces}{m m m O{} O{} O{}}{
\begin{tikzcd} [ampersand replacement=\&]
#1 \arrow{r}{#4} \arrow[bend right]{rr}{#5} \& #2 \arrow{r}{#6} \& #3
\end{tikzcd}}

\definecolor{orange2}{RGB}{255,180,0}

\begin{document}
\title{Logarithmic Hilbert schemes of curves as weighted blow-ups and their integral Chow rings}

\author[V. Arena]{Veronica Arena}\address{Department of Pure Mathematics and Mathematical Statistics, University of Cambridge, Cambridge, CB3 0WA}\email{\url{va365@cam.ac.uk}}
\author[T. Song]{Terry Dekun Song}\address{Department of Pure Mathematics and Mathematical Statistics, University of Cambridge, Cambridge, CB3 0WA}\email{\url{ds2016@cam.ac.uk}}
\contrib[with an appendix by]{Dhruv Ranganathan}

\begin{abstract}
    The logarithmic Hilbert scheme of a logarithmic curve parametrizes subschemes on the expanded degenerations of the curve that are transverse to the boundary. We prove that the logarithmic Hilbert scheme of points on a smooth pointed curve is an iterated weighted blow-up of the symmetric product of the underlying curve. In doing so, we explicitly identify the blow-up centers, weights, and give them modular interpretations. As applications, we calculate their integral Chow rings in terms of those of the symmetric products. Key ingredients in our work include two recent results: the integral Chow ring formula of weighted blow-ups and a weighted analogue of Castelnuovo's criterion for blow-downs. We recover the folklore result the logarithmic Hilbert scheme of toric $\PP^1$ is a toric stack, and the Appendix by Dhruv Ranganathan outlines a complementary approach using Chow quotients.
\end{abstract}
\maketitle
\setcounter{tocdepth}{1}
\tableofcontents
\section{Introduction}
Let $C$ be a smooth, projective curve over a field $k$ of characteristic zero. Distinct closed points $p_1,\dots,p_\ell\in C$ give $C$ a divisorial logarithmic structure as a pair $(C| \sum_{i=1}^\ell p_i) = (C|D).$ The \emph{logarithmic Hilbert scheme} of $n$ points on $(C|D),$ denoted as $\Hilb^n(C|D),$ parametrizes pairs $(\tilde{C}, Z)$ where 
\begin{enumerate}
\item $\tilde{C}$ is a semistable model of $C$ obtained from iterated deformations to the normal cone along $D,$
\item $Z\subset \tilde{C}$ is a length-$n$ closed subscheme that is logarithmically flat and has finite automorphism {over $C$}.
\end{enumerate} The precise definitions are recalled in Section \ref{sec:background}. The space $\Hilb^n(C|D)$ is representable by a Deligne--Mumford stack with logarithmically smooth logarithmic structure {and non-trivial stacky structure}. It is also smooth as an algebraic stack with projective coarse moduli space that compactifies $\Hilb^n(C\setminus D).$ {Broader interests in general logarithmic Hilbert schemes come from studying enumerative geometry of curves and sheaves via degenerations \cite{lw15, MR20, logquot, MR25}, and $\Hilb^n(C|D)$ serves as a local model of some of the more general constructions.}

This work concretely describes the logarithmic Hilbert scheme of points on a curve via the geometry of \textit{weighted blow-ups} (c.f. \cite{quek-rydh-weighted-blow-up}). Given $(\tilde{C}, Z)\in \Hilb^n(C|D)$, taking the scheme-theoretic image along the collapsing map $\tilde{C}\to C$ induces a map $\Hilb^n(C|D)\to \Hilb^n(C)$ from the logarithmic moduli space to the classical one. Our main geometric result is as follows.

\begin{customthm}{\ref{thm:main}}
  The map $\Hilb^n(C|D)\to \Hilb^n(C) = \Sym^n(C)$ factors through a sequence of $\ell \cdot n$ weighted blow-ups $$\Hilb^n(C|D)\to \cdots  \to \Hilb^n(C|D)_{\leq \bi}  \to \cdots \to \Hilb^n(C)$$ where $\bi$ belongs to a totally ordered subset of all  $\bi$ satisfying $\mathbf{0}\leq \bi\leq \mathbf{n} = (n,\dots,n)$ entry-wise. Each $\Hilb^n(C|D)_{\leq \bi}$ is {a moduli space} of subschemes on expanded degenerations of $C$ along $D$ satisfying intermediate stability conditions.
\end{customthm}

The moduli spaces $\Hilb^n(C|D)_{\leq \bi}$ are constructed in Section \ref{sec:interm}. The blow-up centers and weights are explicitly described in Section \ref{sec:wbu}. The edge cases $\Hilb^n(C|D)_{\leq \mathbf{0}}$ and $\Hilb^n(C|D)_{\leq \mathbf{n}}$ recover $\Hilb^n(C|D)$ and $\Hilb^n(C)$ respectively. Interpolating between them are $\Hilb^n(C|D)_{\leq \bi}$ for $\mathbf{0}\lneq \bi\lneq \mathbf{n}$. Given $\bi_1\leq \bi_2,$ the map \[\Hilb^n(C|D)_{\leq \bi_2}\to \Hilb^n(C|D)_{\leq \bi_1}\] can be seen as improving the transversality of subschemes parametrized by $\Hilb^n(C|D)_{\leq \bi_1}$ at the cost of introducing more expanded degenerations in $\Hilb^n(C|D)_{\leq \bi_2}.$ 

Our proof of the theorem reduces to the case $D = p,$ which we now spell out. In this case, each non-trivial semistable model $\tilde{C}\to C$ is $C$ attached with a chain of rational curves bubbling off from $p\in C$. Let $\tilde{p}\in \tilde{C}$ be the proper transform of $p\in C.$ It is the unique smooth point with non-trivial logarithmic structure.


\begin{customthm}{\ref{thm:D=p}}
  For $0\leq i\leq n,$ define $\cZ_i\subset \mathrm{Hilb}^n(C|p)_{\leq i}$ as the closed substack parametrizing $(\tilde{C}, Z)$ where exactly $i$ points are supported on $\tilde{p}.$ Then $ \mathrm{Hilb}^n(C|p)_{\leq i-1}\to \mathrm{Hilb}^n(C|p)_{\leq i}$ is the weighted blow-up with center $\cZ_i$ and weights $(1,\dots,i).$
\end{customthm}

To prove this in Section \ref{sec:wbu}, we identify a divisor $\mathcal{E}_{i-1}\subset \Hilb^n(C|p)_{\leq i-1}$ and prove that it satisfies the smooth weighted blow-down criterion of \cite{arena2024criterion}.

\subsection{Toric geometry on $\mathbb{P}^1$}
We note that the geometry of the logarithmic Hilbert stacks is particularly concrete when $C = \PP^1$ and $D\leq 0+\infty.$ In this case,

\begin{customcor}{\ref{cor:P1}}
     The logarithmic Hilbert stacks $\Hilb^n(\PP^1|0)$ and $\Hilb^n(\PP^1|0+\infty)$ are toric stacks.
\end{customcor}

Indeed, the above logarithmic Hilbert stacks are weighted blow-ups of $\PP^n\cong \Sym^n(\PP^1)$ along torus-invariant strata. Tracking the weights in the blow-ups, we identify the toric fan of $\Hilb^n(\PP^1|0)$ as the successive star subdivisions of the fan of $\PP^n$ along the rays $(1,2,\dots,n)$, $(0,1,2,\dots, n-1)$,$\dots$, $(0,0,\dots,1,2).$ Each ray represents the weighted blow-up along the toric stratum of the zero locus of some symmetric functions on $\PP^1$, and the weights of the blow-up coincide with the degrees of the symmetric functions.

In particular, we describe the explicit construction of $\Hilb^2(\PP^1|0)$ as weighted blow-up in Section \ref{sec:2P10}. This section gives a self-contained and accessible illustration of the geometric picture. 

\subsection{Integral Chow ring calculation} After the geometric description of $\Hilb^n(C|D)$ given by Theorem \ref{thm:main}, we leverage the weighted Keel's formula in \cite{Arena2025}, which gives a presentation of the integral Chow rings of the moduli spaces $\CHs(\Hilb^n(C|D)_{\le \bf i})$ as generated by $\mathrm{CH}^\star(\mathrm{Sym}^n(C))$ and the exceptional divisors $[\cE_{\bi}]$ introduced at each weighted blow-up. 

To state our result, we set up the following notation. For $p_r\in D,$ define line bundle $\cL_{p_r}$ on $\Sym^n(C)$ as the normal bundle along the embedding $s_{n, n+1}^{(p_r)}:\Sym^n(C)\hookrightarrow \Sym^{n+1}(C)$ given by $Z\mapsto Z+p_r.$ The line bundles $\cL_{p_r}$ on $\Sym^n(C)$ are preserved under pullback maps $(s_{n,n+1}^{(p)})^*$ for any $p\in C.$

\begin{customthm}{\ref{thm:chowringD}}
    Let $\epsilon_{j}^{(r)}\in \CH^1(\Hilb^{n}(C|D)_{\leq \bi})$ be the exceptional divisor corresponding to $j$ points supported on the last bubble along $p_r.$ The Chow ring $\mathrm{CH}^\star(\Hilb^n(C|D)_{\leq \bi})$ has a presentation $$\mathrm{CH}^\star(\Hilb^n(C|D)_{\leq \bi}) = \frac{\mathrm{CH}^\star(\mathrm{Sym}^n(C))[\epsilon^{(r)}_{j}\mid i_r+1\leq j\leq n, r = 1,\dots, \ell]}{\left\{\begin{array}{c}
   \text{for each }j = n, \dots, i_r+1:\  Q_{n,j}^{(r)}(\epsilon^{(r)}_{n},\dots,\epsilon^{(r)}_{j}); \vspace{5pt}\\  \ker({(s^{(p_r)}_{(n-j), n})}^*) \cdot \epsilon^{(r)}_{j}; \vspace{5pt}\\  \text{for each } k = n-j, \dots, 1: \ Q_{n-j,k}^{(r)}(\epsilon^{(r)}_{n},\dots, \epsilon^{(r)}_{j+k})\cdot\epsilon^{(r)}_{n-j} 
  \end{array}\right\}_{r = 1,\dots, \ell}}$$ where $Q_{m,h}$ are multivariable polynomials (each corresponding to a top Chern class) with coefficients in $\langle c_1(\mathcal{L}_{p_r}): r = 1,\dots, \ell\rangle\subset\CH^\star(\Sym^n(C))$ defined as: \begin{enumerate}
      \item $Q_{m,h}(t_m, \dots, t_{h+1}, t):=\prod_{k=1}^h (kt + c_1(\mathcal{L})-\sum_{j=1}^{m-h}(k+j)t_{j+h})$ when $m>h>0,$
      \item $Q_{m,m}(t):=\prod_{k=1}^h (kt + c_1(\mathcal{L}))$ when $m = h,$
      \item formally $Q_{0, k} = 0$ for all $k.$
  \end{enumerate}
\end{customthm}

\subsection{Euler characteristics} In Section \ref{sec:chi}, we use the stratification of $\Hilb^n(C|D)$ to compute the generating function of the classes $[\Hilb^n(C|D)]\in K_0(\mathsf{Var}),$ the Grothendieck ring of varieties.
\begin{customthm}{\ref{thm:chi}}
    We have $$\sum_{n\geq 0}[\Hilb^n(C|D)] t^n = Z_C(t)\left(\frac{(1-\mathbb{L}t)(1-t)}{1-(\mathbb{L}+1)t}\right)^\ell,$$ where $Z_C(t)= \sum_{m\geq 0} [\mathrm{Sym}^m(C)] t^m\in K_0(\mathsf{Var})[\![t]\!]$ is the motivic Zeta function of $C.$
\end{customthm}

The calculation specializes to, for instance, the generating function of Euler characteristics $$\sum_{n\geq 0}\chi(\Hilb^n(C|D)) t^n = (1-t)^{2g-2} \left(\frac{(1-t)^2}{1-2t}\right)^{\ell}.$$

\subsection{Perspectives}
{While logarithmic moduli spaces are primarily motivated by applications in enumerative geometry, it is of independent interest to relate their geometry to the well-studied non-logarithmic moduli spaces. Our work gives a concrete first example in this direction.}

\label{subsec:modexp}
Let $\mathscr{C}\to \Exp(C|D)$ be the universal curve over the stack $\Exp(C|D)$ of expanded degenerations, and let $\Hilb^n(\mathscr{C})$ be the relative Hilbert scheme of length $n,$ so that the morphism $\Hilb^n(\mathscr{C})\to \Exp(C|D)$ is representable. While being universally closed, the stack $\Hilb^n(\mathscr{C})$ is neither quasi-compact nor separated and has positive dimensional stabilizers. 

The intermediate moduli stacks $\Hilb^n(C|D)_{\leq \bi}$ are open substacks of $\Hilb^n(\mathscr{C})$ that are smooth, proper, and contain $\Hilb^n(C\setminus D)$ as a dense open substack. {In other words, they are candidates to produce well-behaved moduli spaces from the stack $\Hilb^n(\mathscr{C}).$ From this point of view, the iterated weighted blow-up descriptions in Theorem \ref{thm:main} set up a VGIT-style wall-crossing between logarithmic and classical Hilbert scheme of points on a smooth marked curve, in which the $\mathbb{G}_m$-quotients arise from action by the rubber torus.} Instances of weighted blow-ups as wall-crossing morphisms include Shah's work on degree-2 K3 surfaces \cite{Shah} and its application to intersection theory \cite{COP}, K-moduli of plane curves \cite{ADL}, and moduli of stable genus one curves \cite[§7]{arena2024criterion}.

More generally, given a moduli stack $\mathscr{M}$ over (fine and saturated) logarithmic schemes, there often exists a piecewise linear space $\mathscr{S}$--this is a stack over fs log schemes that admit log modifications by Artin fans \cite{logquot}--and a morphism $\mathscr{M}\to \mathscr{S}$ that is representable by an algebaric stack. Producing proper open substacks of $\mathscr{M}$ and studying their wall-crossing morphisms is a systematic way to relate logarithmic moduli spaces and their non-logarithmic parallels.




\subsection{Related work}
{The general moduli problem of logarithmic Hilbert schemes is motivated by approaching enumerative geometry via degenerations: they are constructed for smooth pairs by Li--Wu \cite{lw15}, for curves in simple normal crossings pair three-folds by Maulik--Ranganathan \cite{MR20} and in full generality by Kennedy-Hunt \cite{logquot}. Examples and their concrete geometric descriptions point to connections to the toric geometry of secondary polytope by work of Kennedy-Hunt on linear systems of curves in toric surfaces \cite{kh21} and geometric invariant theory by work of Tschanz and Shafi--Tschanz on logarithmic Hilbert schemes of points on degenerations of K3 surfaces \cite{tschanz2023, ST25}.} We also acknowledge the forthcoming work of Suraj Dash, which gives a combinatorial classification of GIT quotients arising from the relative Hilbert schemes of points over the stack of expansions in a more general setting.

\subsection{Future directions}
As logarithmic Hilbert schemes are motivated by the study of moduli spaces via degeneration, it is natural to investigate logarithmic Hilbert and quot schemes of nodal curves {as well as the relative logarithmic moduli spaces of the universal curve over $\Mbar_{g,n}$}. {Analogous wall-crossing morphisms relating them to the ordinary moduli spaces can potentially lead to comparison formulas between enumerative invariants coming from the logarithmic and non-logarithmic moduli spaces.}


The weighted blow-up picture explored in this work seems to be applicable to general logarithmic symmetric products and has application to the theory of logarithmic 0-cycles. In the surfaces case, combining this approach with \cite[§4]{MR25} should produce a complete description of the cohomology and Chow ring of the logarithmic Hilbert schemes of points on surfaces. 

{Viewing smooth points as divisors on curves, it will be worthwhile to study the moduli of logarithmic hypersurfaces from the perspective of wall-crossing. For instance, it will be interesting to produce intermediate moduli spaces that interpolate between the logarithmic linear system of curves on toric surfaces studied by Kennedy-Hunt \cite{kh21} and the ordinary linear system.}

\subsection{Acknowledgments}
We are grateful to Dhruv Ranganathan for suggesting the project, helping us with many enlightening discussions and for the Appendix. We also thank Dan Abramovich, María Abad Aldonza, Samir Canning, Francesca Carocci, Suraj Dash, Andrea Di Lorenzo, Giovanni Inchiostro, Patrick Kennedy-Hunt, Siddarth Mathur, Denis Nesterov, Michele Pernice, and Thibault Poiret for helpful conversations.

Terry Song is supported by a Cambridge Trust international scholarship. Veronica Arena is supported by EPSRC Horizon Europe Guarantee EP/Y037162/1.

\subsection{Notes to the reader} In the following, the term `weights' is overloaded as it is both used to denote the weights of $\mathbb{G}_m$-action and the weights of Hassett spaces. Despite this, we expect no serious confusion to arise.

\section{Background on logarithmic Hilbert schemes}\label{sec:background}

In this section, we review basic facts and properties of the logarithmic Hilbert scheme that will be relevant for our constructions in the next section.

\subsection{Expanded degenerations}

Logarithmic Hilbert schemes of a curve parametrize subschemes on \emph{expanded degenerations} of the curve. Given a curve $C$ and a divisor $D,$ an expanded degeneration, or \emph{expansion} for short, is a semistable model of $C$ that destabilizes along the direction of $D.$ After defining it on a single curve, we proceed to recall facts on the moduli stack of expansions that we denote as $\mathrm{Exp}(C|D).$

\begin{definition}
    Let $C$ be a curve, and let $p\in C$ be a smooth point. The one-step expansion of $C$ along $p$ is the special fibre $\tilde{C}$ of the deformation to the normal cone construction of $p\in C.$ There is a proper contraction map $\pi: \tilde{C}\to C$ that is an isomorphism away from $p\in C,$ and $\pi^{-1}(p) \cong \mathbb{P}(N_{p/C}\oplus \mathbf{1})$ is called the \emph{bubble} of the one-step expansion.
\end{definition}

\begin{definition}\label{defn:singleexp}
    Let $C$ be a curve, and let $p_1,\dots,p_\ell\in C$ be distinct closed points with $D = \sum_{i=1}^\ell p_i.$ An \textbf{expansion} of $C$ along $D$ is a proper map $\tilde{C}\to C$ that factors through a sequence of one-step expansions along any $p_i\in C$ or their proper transforms\footnote{It refers to taking the proper transforms along the iterated blow-ups that construct the expansion stack $\mathrm{Exp}(C|p):$ See \cite[§1]{Li2001} or \cite[§1]{MT16}.} in previous one-step expansions. We denote $\tilde{p}_j\in \tilde{C}$ as the proper transform of $p_j\in C$ and call it the end point of $\tilde{C}$ along $p_j.$
       A \textit{bubble} in $\tilde{C}$ is any component of $\tilde{C}$ that is contracted along the map $\tilde{C}\to C.$ A \textit{chain of bubbles} is a union of bubbles that is connected. An \textit{automorphism} of an expansion $\tilde{C}\to C$ is an automorphism $\sigma\in \mathrm{Aut}(\tilde{C},\tilde{p}_1,\dots,\tilde{p}_\ell)$ that fits in the commutative diagram \[\begin{tikzcd}
	{(\tilde{C},\tilde{p}_1,\dots,\tilde{p}_\ell)} && {(\tilde{C},\tilde{p}_1,\dots,\tilde{p}_\ell)} \\
	& C
	\arrow["\sigma", from=1-1, to=1-3]
	\arrow[from=1-1, to=2-2]
	\arrow[from=1-3, to=2-2]
\end{tikzcd}\]
\end{definition}

\begin{remark}\label{rem:autgroups}
    Let $\pi: \tilde{C}\to C$ be the contraction map, and let $R^{(i)}\subset \tilde{C}$ be $\pi^{-1}(\tilde{p}_i),$ so that $R^{(i)}$ is a chain of rational curves. From Definition \ref{defn:singleexp}, we see that the automorphism group of the point $(\tilde{C},\tilde{p}_1,\dots,\tilde{p}_\ell)$ is a product of algebraic tori $\prod_{i=1}^\ell \mathbb{G}_m^{n_i},$ where $n_i$ is the number of rational components of each $R^{(i)}.$ On the other hand, let $C$ have arithmetic genus $g,$ then $(\tilde{C},\tilde{p}_1,\dots, \tilde{p}_\ell)$ defines a point in $\mathfrak{M}_{g,\ell},$ the stack of genus $g$ nodal curves with $\ell$ marked points, and the automorphism group of $(\tilde{C},\tilde{p}_1,\dots,\tilde{p}_\ell)$ as an expansion agrees with its automorphism group in the stack $\mathfrak{M}_{g,\ell}.$
\end{remark}

\begin{figure}[htp]
\resizebox{.15\textwidth}{!}{

\tikzset{every picture/.style={line width=0.75pt}} 

\begin{tikzpicture}[x=0.75pt,y=0.75pt,yscale=-1,xscale=1]

\draw [color={rgb, 255:red, 74; green, 144; blue, 226 }  ,draw opacity=1 ]   (93.04,15.97) -- (71.3,74.28) ;
\draw [color={rgb, 255:red, 74; green, 144; blue, 226 }  ,draw opacity=1 ]   (27.73,16) -- (11.17,60.33) ;
\draw [color={rgb, 255:red, 0; green, 0; blue, 0 }  ,draw opacity=1 ]   (17.94,23.84) -- (99.98,23.84) ;
\draw [color={rgb, 255:red, 74; green, 144; blue, 226 }  ,draw opacity=1 ]   (11.56,43.42) -- (28.17,79.33) ;
\draw  [fill={rgb, 255:red, 0; green, 0; blue, 0 }  ,fill opacity=1 ] (75.96,64.84) .. controls (75.96,64.02) and (75.3,63.36) .. (74.48,63.36) .. controls (73.66,63.36) and (73,64.02) .. (73,64.84) .. controls (73,65.66) and (73.66,66.32) .. (74.48,66.32) .. controls (75.3,66.32) and (75.96,65.66) .. (75.96,64.84) -- cycle ;
\draw  [fill={rgb, 255:red, 0; green, 0; blue, 0 }  ,fill opacity=1 ] (26.78,73.33) .. controls (26.78,72.51) and (26.12,71.85) .. (25.3,71.85) .. controls (24.48,71.85) and (23.82,72.51) .. (23.82,73.33) .. controls (23.82,74.15) and (24.48,74.82) .. (25.3,74.82) .. controls (26.12,74.82) and (26.78,74.15) .. (26.78,73.33) -- cycle ;

\draw (11,70.4) node [anchor=north west][inner sep=0.75pt]  [font=\footnotesize]  {$\tilde{p}_{1}$};
\draw (76,60) node [anchor=north west][inner sep=0.75pt]  [font=\footnotesize]  {${\tilde{p}}_{2}$};

\end{tikzpicture}

}
\caption{An expansion of $(C|p_1+p_2)$ with proper transforms $\tilde{p}_1$ and $\tilde{p}_2$.}
\label{fig:exp}
\end{figure} 

\begin{remark}
    A bubble in an expansion is always isomorphic to $\mathbb{P}(T_{\tilde{p}_i'}\tilde{C}'\oplus \mathbf{1})\cong \mathbb{P}^1,$ where $\tilde{C}'$ is an intermediate expansion and $\tilde{p}_i'\in C'$ is the proper transform of some marked point $p_i,$ hence a smooth point on $\tilde{C}'.$ The bubble is naturally endowed with a toric structure with $0 = \mathbb{P}(T_{\tilde{p}_i'}\tilde{C}'\oplus 0)$ and $\infty = \mathbb{P}(0\oplus \mathbf{1}).$\end{remark}

\subsection{The stack Exp}\label{subsec:stackexp}

We list the basic properties and facts about the stack of expanded degenerations $$\mathrm{Exp}(C|p) = \mathrm{Exp}(\mathbb{A}^1/\mathbb{G}_m)$$ that we will use for later. It is first introduced by J. Li in \cite[§1]{Li2001}. We also refer the reader to \cite[§2.5]{GraberVakil}, \cite[§2-3]{ACFW}, \cite[§1]{MT16}, and \cite[§3.3]{Oesinghaus18}.

\begin{definition}
    Let $\mathsf{Exp}^\bullet(\mathbb{R}_{\geq 0})$ be the following diagram of polyhedral cones $$\mathrm{pt} = \mathbb{R}_{\geq 0}^0\to\mathbb{R}_{\geq 0}^1 \rightrightarrows  \mathbb{R}_{\geq 0}^2 \substack{\rightarrow\\[-1em] \rightarrow \\[-1em] \rightarrow} \begin{tikzcd}{\mathbb{R}_{\geq 0}^3} & {\cdots} & {\mathbb{R}_{\geq 0}^{n}} & & {\mathbb{R}_{\geq 0}^{n+1}} & &\cdots
	\arrow["4"{description}, from=1-1, to=1-2]
    \arrow["n"{description}, from=1-2, to=1-3]
    \arrow["n+1"{description}, from=1-3, to=1-5]
    \arrow["n+2"{description}, from=1-5, to=1-7]
\end{tikzcd}$$ where each $\mathbb{R}_{\geq 0}^n$ has $n+1$ maps into $\mathbb{R}_{\geq 0}^{n+1}$ given by mapping the $n$ coordinates to $\mathbb{R}_{\geq 0}^{n+1}$ via the $n+1$ ordered injections $\{1,\dots, n\}\hookrightarrow \{1,\dots, n+1\}$ and extending by zero.
\end{definition}

For instance, the two injections $\mathbb{R}_{\geq 0}^1\rightrightarrows \mathbb{R}_{\geq 0}^2$ are given by $x\mapsto (0,x)$ and $x\mapsto (x,0).$ We think of $(x_1,\dots, x_n)\in \mathbb{R}_{\geq 0}^n$ as parametrizing the edge lengths of $n$ ordered edges and think of $\mathsf{Exp}^\bullet(\mathbb{R}_{\geq 0})$ as the combinatorial moduli space of subdividing the ray $\mathbb{R}_{\geq 0}.$ The following figure indicates the case of $(x_1,x_2)\in \mathbb{R}_{\geq 0}^2.$ The meaning of the ray will be explained momentarily.

\[\begin{tikzpicture}[x=0.75pt,y=0.75pt,yscale=-1,xscale=1]

\draw   (23.13,27.04) .. controls (23.13,24.81) and (24.94,23) .. (27.17,23) .. controls (29.4,23) and (31.2,24.81) .. (31.2,27.04) .. controls (31.2,29.27) and (29.4,31.07) .. (27.17,31.07) .. controls (24.94,31.07) and (23.13,29.27) .. (23.13,27.04) -- cycle ;
\draw   (89.45,27.04) .. controls (89.45,24.81) and (91.25,23) .. (93.48,23) .. controls (95.71,23) and (97.52,24.81) .. (97.52,27.04) .. controls (97.52,29.27) and (95.71,31.07) .. (93.48,31.07) .. controls (91.25,31.07) and (89.45,29.27) .. (89.45,27.04) -- cycle ;
\draw   (172.34,27.04) .. controls (172.34,24.81) and (174.15,23) .. (176.38,23) .. controls (178.61,23) and (180.42,24.81) .. (180.42,27.04) .. controls (180.42,29.27) and (178.61,31.07) .. (176.38,31.07) .. controls (174.15,31.07) and (172.34,29.27) .. (172.34,27.04) -- cycle ;
\draw    (31.2,27.04) -- (89.45,27.04) ;
\draw    (97.52,27.04) -- (172.34,27.04) ;
\draw    (180.42,27.04) -- (323,27.04) ;
\draw    (31.14,39.93) -- (89.8,39.93) ;
\draw [shift={(92.8,39.93)}, rotate = 180] [fill={rgb, 255:red, 0; green, 0; blue, 0 }  ][line width=0.08]  [draw opacity=0] (8.93,-4.29) -- (0,0) -- (8.93,4.29) -- cycle    ;
\draw [shift={(28.14,39.93)}, rotate = 0] [fill={rgb, 255:red, 0; green, 0; blue, 0 }  ][line width=0.08]  [draw opacity=0] (8.93,-4.29) -- (0,0) -- (8.93,4.29) -- cycle    ;
\draw    (93.48,31.07) -- (93.48,49.74) ;
\draw    (27.17,31.07) -- (27.17,49.74) ;
\draw    (176.38,31.07) -- (176.38,49.74) ;
\draw    (95.8,39.93) -- (174.44,39.93) ;
\draw [shift={(177.44,39.93)}, rotate = 180] [fill={rgb, 255:red, 0; green, 0; blue, 0 }  ][line width=0.08]  [draw opacity=0] (8.93,-4.29) -- (0,0) -- (8.93,4.29) -- cycle    ;
\draw [shift={(92.8,39.93)}, rotate = 0] [fill={rgb, 255:red, 0; green, 0; blue, 0 }  ][line width=0.08]  [draw opacity=0] (8.93,-4.29) -- (0,0) -- (8.93,4.29) -- cycle    ;

\draw (52,43.4) node [anchor=north west][inner sep=0.75pt]    {$x_{1}$};
\draw (126,43.4) node [anchor=north west][inner sep=0.75pt]    {$x_{2}$};
\end{tikzpicture}\]

We recall that the cone $\mathbb{R}_{\geq 0}^n$ has the Artin fan $[\mathbb{A}^n/\mathbb{G}_m^n] = [\mathbb{A}^1/\mathbb{G}_m]^n$ via the language of \cite{ccuw}. Because the maps between $\mathbb{R}_{\geq 0}^n$ described above are maps of polyhedral cone complexes, they induce algebraic morphisms between their Artin fans.
\begin{definition}
    Let $\mathrm{Exp}^{\bullet}(\mathbb{A}^1/\mathbb{G}_m)$ be the diagram of Artin fans of the cones $$\mathrm{Spec}(k) \to \mathbb{A}^1/\mathbb{G}_m \rightrightarrows 
	\begin{tikzcd}{[\mathbb{A}^1/\mathbb{G}_m]^2} & {\cdots} & {[\mathbb{A}^1/\mathbb{G}_m]^n} & & {[\mathbb{A}^1/\mathbb{G}_m]^{n+1}} & &\cdots 
	\arrow["3"{description}, from=1-1, to=1-2]
    \arrow["n"{description}, from=1-2, to=1-3]
    \arrow["n+1"{description}, from=1-3, to=1-5]
    \arrow["n+2"{description}, from=1-5, to=1-7]
\end{tikzcd} $$
such that each map is the open immersion of Artin stacks corresponding to the maps of the polyhedral cones defined above.
\end{definition}

\begin{lemma}
    The stack $\mathrm{Exp}(C|p) = \mathrm{Exp}(\mathbb{A}^1/\mathbb{G}_m)$ is the colimit of the diagram $\mathrm{Exp}^{\bullet}(\mathbb{A}^1/\mathbb{G}_m)$.
\end{lemma}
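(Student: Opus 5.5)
We will prove the lemma by comparing functors of points on both sides. The colimit is taken in stacks over fs log schemes, or equivalently in the 2-category of algebraic stacks. The diagram consists of open immersions, so we will build the colimit concretely as an increasing union of open substacks.

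\textbf{Step 1: a map from each term to $\mathrm{Exp}(C|p)$.} We start from Li's description of $\mathrm{Exp}(C|p)$ \cite[§1]{Li2001}. A family of expansions of length at most $n$ over a base $S$ is pulled back from $[\mathbb{A}^n/\mathbb{G}_m^n]$: the coordinate $x_j$ is the smoothing parameter of the $j$-th node of the chain, and the torus rescales these parameters. Iterating the deformation to the normal cone over $[\mathbb{A}^1/\mathbb{G}_m]^n$ produces a universal expansion $\mathscr{C}_n \to [\mathbb{A}^1/\mathbb{G}_m]^n$. This gives a morphism $\phi_n\colon [\mathbb{A}^1/\mathbb{G}_m]^n\to \mathrm{Exp}(C|p)$, whose image is the open substack of expansions with at most $n$ bubbles. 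We then check that $\phi_{n+1}\circ \iota_k \cong \phi_n$ for each of the $n+1$ face inclusions $\iota_k$. Geometrically, $\iota_k$ sets one edge length to be generic, i.e. invertible. Tropically, this inserts a vertex of length zero in position $k$, which merges two adjacent bubbles. The compatible 2-isomorphisms come from the fact that the universal curves agree on overlaps.

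\textbf{Step 2: the induced map is an isomorphism.} The $\phi_n$ assemble into a morphism $\Phi\colon \operatorname{colim}\to \mathrm{Exp}(C|p)$. We show that $\Phi$ is étale and bijective on isotropy groups and geometric points.
\begin{itemize}
\item On points: a point of $[\mathbb{A}^1/\mathbb{G}_m]^n$ is a subset of coordinates that vanish. Modulo the face identifications, the only invariant is the number $m\le n$ of vanishing coordinates, and this number is exactly the number of bubbles. Hence geometric points of the colimit correspond bijectively to the isomorphism classes of expansions.
\item On stabilizers: the point $0\in[\mathbb{A}^1/\mathbb{G}_m]^m$ has stabilizer $\mathbb{G}_m^m$. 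By Remark \ref{rem:autgroups}, this matches the automorphism group of an expansion with $m$ bubbles.
\item Étaleness: $\phi_n$ is smooth, in fact an open immersion onto its image. This is the content of Li's construction, or equivalently of the description of $\mathrm{Exp}$ as the stack of tropical subdivisions of $\mathbb{R}_{\ge0}$ in the sense of \cite{ACFW, ccuw}. The colimit of open immersions is covered by the opens $[\mathbb{A}^1/\mathbb{G}_m]^n$, so $\Phi$ is étale.
\end{itemize}
A representable étale morphism that is bijective on points and isomorphic on stabilizers is an isomorphism.

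\textbf{Main obstacle.} The main difficulty is the claim that the $\phi_n$ are open immersions, and not merely étale surjections onto their images. The point is that $[\mathbb{A}^1/\mathbb{G}_m]^n$ remembers an ordering of the edges, while an expansion only remembers its chain. We must check that the ordering is canonically determined by the chain, with position measured from $C$ toward $\tilde p$, so that no extra automorphisms permute edges. We will verify this on $S$-points by using the tropicalization map to $\mathsf{Exp}^\bullet(\mathbb{R}_{\ge0})$. An $S$-point of $\mathrm{Exp}$ gives, étale-locally on $S$, a subdivision of $\mathbb{R}_{\ge0}$ with ordered edges and lengths in $\overline{M}_S$. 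Such data lifts uniquely, up to unique isomorphism, to $[\mathbb{A}^1/\mathbb{G}_m]^n$ by the universal property of Artin fans \cite{ccuw}. Uniqueness of the lift is what gives full faithfulness.
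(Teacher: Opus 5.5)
The paper states this lemma without proof, recalling it as a standard fact about Li's stack of expansions and pointing to the references cited just before it. Your argument therefore has to stand on its own, and as written it has a genuine gap.

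Your Step 2 rests on the claim that $\phi_n\colon[\mathbb{A}^1/\mathbb{G}_m]^n\to\mathrm{Exp}(C|p)$ is an open immersion. Equivalently, it assumes the colimit is an increasing union of the charts $[\mathbb{A}^1/\mathbb{G}_m]^n$. This is false for every $n\ge 2$.

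Isomorphism classes of geometric points of $[\mathbb{A}^1/\mathbb{G}_m]^n$ are indexed by the $2^n$ subsets of vanishing coordinates. By contrast, there are only $n+1$ expansion types with at most $n$ bubbles. For instance, $(0,1)$ and $(1,0)$ in $[\mathbb{A}^2/\mathbb{G}_m^2]$ both map to the one-bubble expansion. Put differently, the two face maps $[\mathbb{A}^1/\mathbb{G}_m]\rightrightarrows[\mathbb{A}^1/\mathbb{G}_m]^2$ have different open images, yet they become 2-isomorphic after composing with $\phi_2$, which is impossible for a monomorphism. Your own ``on points'' bullet already uses exactly this collapse.

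Because level $n+1$ of the diagram receives $n+1$ parallel arrows from level $n$, the colimit glues each chart to itself. So the structure maps $[\mathbb{A}^1/\mathbb{G}_m]^n\to\operatorname{colim}$ are not open immersions. Consequently, the sentence ``the colimit of open immersions is covered by the opens, so $\Phi$ is étale'' has no support. You have not shown that the colimit is algebraic, nor that $\coprod_n[\mathbb{A}^1/\mathbb{G}_m]^n\to\operatorname{colim}$ is an étale cover. That is exactly what you need in order to descend étaleness from the $\phi_n$ to $\Phi$.

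The uniqueness claimed in your last paragraph fails for the same reason. A subdivision with $m<n$ edges of nonzero length lifts to $[\mathbb{A}^1/\mathbb{G}_m]^n$ in $\binom{n}{m}$ non-isomorphic ways, according to where the zero-length edges sit, so $\phi_n$ is not fully faithful. Relatedly, $\iota_k$ inserts an \emph{edge} of length zero, i.e.\ it smooths a node. For $k=1$ it merges the first bubble into the core, not two bubbles.

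What is true, and what a repair should use, is the following. Each $\phi_n$ is étale, with image the open substack $\mathrm{Exp}_{\le n}$ of expansions with at most $n$ bubbles. Moreover, $\phi_m$ hits the $m$-bubble expansion only at the origin and induces an isomorphism of stabilizers $\mathbb{G}_m^m$ there. This is where the canonical ordering from $C$ towards $\tilde p$ genuinely enters.

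One then computes the étale groupoid $[\mathbb{A}^1/\mathbb{G}_m]^n\times_{\mathrm{Exp}}[\mathbb{A}^1/\mathbb{G}_m]^n\rightrightarrows[\mathbb{A}^1/\mathbb{G}_m]^n$. It is covered by open substacks isomorphic to $[\mathbb{A}^1/\mathbb{G}_m]^m$, one for each pair of order-preserving injections $[m]\hookrightarrow[n]$, embedded by the corresponding pair of face maps. This is precisely the gluing data of the diagram, closed under composites. Hence $\mathrm{Exp}_{\le n}$ is the colimit of the diagram truncated at level $n$. Only after that does one take an increasing union, and it is a union of the opens $\mathrm{Exp}_{\le n}$, not of the charts.

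Your observation about orderings is the right ingredient, but its role is here. An order-preserving injection is determined by its image, so the gluing creates no extra automorphisms and the stabilizers remain $\mathbb{G}_m^m$.
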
 

\begin{remark}
    The diagrams above give the combinatorial moduli space of subdivisions of $\mathbb{R}_{\geq 0},$ which we denote as $\mathsf{Exp}(\mathbb{R}_{\geq 0}),$ a well-defined cone complex structure as the colimit of the diagram $\mathsf{Exp}^\bullet(\mathbb{R}_{\geq 0})$. In general, choices are required to give moduli spaces of subdivisions of higher dimensional polyhedral cones structures of cone complexes. This corresponds to the fact that $\mathrm{Exp}(\mathbb{A}^1/\mathbb{G}_m)$ is a well-defined algebraic stack with logarithmic structure, while its higher dimensional analogues are not algebraic stacks in general.
\end{remark}

We relate the stack to the moduli problem of expanded degenerations. Let $x_i$ be a coordinate in the cone $\mathbb{R}_{\geq 0}^n.$ It corresponds to a factor of $\mathbb{A}^1/\mathbb{G}_m$ in the Artin fan $[\mathbb{A}^1/\mathbb{G}_m]^n,$ and the underlying topology of $\mathbb{A}^1/\mathbb{G}_m$ is given by the specialization $\mathrm{Spec}(k)\rightsquigarrow B\mathbb{G}_m.$ The dense point $\mathrm{Spec}(k)$ corresponds not performing any expansion, so the pair $(C,p)$ tropicalizes to:
\[\begin{tikzpicture}[x=0.75pt,y=0.75pt,yscale=-1,xscale=1]

\draw   (15.34,23.04) .. controls (15.34,20.81) and (17.15,19) .. (19.38,19) .. controls (21.61,19) and (23.42,20.81) .. (23.42,23.04) .. controls (23.42,25.27) and (21.61,27.07) .. (19.38,27.07) .. controls (17.15,27.07) and (15.34,25.27) .. (15.34,23.04) -- cycle ;
\draw    (23.42,23.04) -- (166,23.04) ;
\end{tikzpicture}\]
The closed point $B\mathbb{G}_m$ corresponds to a one-step expansion of the curve $C$ along $p$, and the stabilizer group of $\mathbb{G}_m$ refers to the automorphism group of the bubble relative to the two marked points. The tropicalization of the expanded pair $(\tilde{C},\tilde{p})$ is given as follows, in which the vertex next to the ray corresponds to the bubble:

\[\begin{tikzpicture}[x=0.75pt,y=0.75pt,yscale=-1,xscale=1]

\draw   (13.45,13.04) .. controls (13.45,10.81) and (15.25,9) .. (17.48,9) .. controls (19.71,9) and (21.52,10.81) .. (21.52,13.04) .. controls (21.52,15.27) and (19.71,17.07) .. (17.48,17.07) .. controls (15.25,17.07) and (13.45,15.27) .. (13.45,13.04) -- cycle ;
\draw   (96.34,13.04) .. controls (96.34,10.81) and (98.15,9) .. (100.38,9) .. controls (102.61,9) and (104.42,10.81) .. (104.42,13.04) .. controls (104.42,15.27) and (102.61,17.07) .. (100.38,17.07) .. controls (98.15,17.07) and (96.34,15.27) .. (96.34,13.04) -- cycle ;
\draw    (21.52,13.04) -- (96.34,13.04) ;
\draw    (104.42,13.04) -- (247,13.04) ;
\draw    (17.48,17.07) -- (17.48,35.74) ;
\draw    (100.38,17.07) -- (100.38,35.74) ;
\draw    (19.8,25.93) -- (98.44,25.93) ;
\draw [shift={(101.44,25.93)}, rotate = 180] [fill={rgb, 255:red, 0; green, 0; blue, 0 }  ][line width=0.08]  [draw opacity=0] (8.93,-4.29) -- (0,0) -- (8.93,4.29) -- cycle    ;
\draw [shift={(16.8,25.93)}, rotate = 0] [fill={rgb, 255:red, 0; green, 0; blue, 0 }  ][line width=0.08]  [draw opacity=0] (8.93,-4.29) -- (0,0) -- (8.93,4.29) -- cycle    ;

\draw (50,29.4) node [anchor=north west][inner sep=0.75pt]    {$x_{i}$};

\end{tikzpicture}\]

The modular interpretation of the Artin fans $[\mathbb{A}^1/\mathbb{G}_m]^n$ now follows from taking product across all coordinates in $\mathbb{R}_{\geq 0}^n.$

\begin{definition}\label{defn:scrCtilp}
    Let $\mathscr{C}_p\to \mathrm{Exp}(C|p)$ be the universal curve over the expansion stack. From our discussion above, it has open substack $C\subset \mathscr{C}.$ 
    The \emph{endpoint} along $p$ defines a section $\tilde{p}: \mathrm{Exp}(C|p)\to \mathscr{C}.$ 
\end{definition}

\begin{remark}
    In the literature \cite[§1.1]{Li2001} \cite[§1.4]{MT16}, the family $\mathscr{C}_p\to \mathrm{Exp}(C|p)$ is constructed as follows. {First, performing iterated blow-ups of the constant family $C\times \mathbb{A}^n\to \mathbb{A}^n$ produces a family $\mathbb{C}_n\to \mathbb{A}^n:$ for instance, the family $\mathbb{C}_1\to \mathbb{A}^1$ is given by $\mathrm{Bl}_{p\times 0}(C\times \mathbb{A}^1)\to \mathbb{A}^1,$ and the expansion with a single bubble arises as the special fiber. Observing that $\mathbb{C}_n\to \mathbb{A}^n$ are $\mathbb{G}_m^n$-equivariant, the universal family is then the colimit of $[\mathbb{C}_n/\mathbb{G}_m^n]\to [\mathbb{A}^n/\mathbb{G}_m^n]$ along the morphisms described earlier in this section.}

    From this point of view, the section $\tilde{p}$ is the proper transform of the constant section $p: \mathbb{A}^n\to C\times \mathbb{A}^n.$
\end{remark}

\begin{definition}
  Given $D = \sum_{j=1}^\ell p_j,$ we define $\mathrm{Exp}(C|D) = \prod_{j=1}^{\ell} \mathrm{Exp}(C|p_j),$ so that $\mathrm{Exp}(C|D)$ admits the product cone complex structure.
\end{definition}

 Recall that each universal expanded curve along $p_j$ admits a natural map $\mathscr{C}_j\to C\times \mathrm{Exp}(C,p_j).$ The universal expanded curve along $D$ is their fibre product.

 \begin{definition}
  The \emph{universal expanded curve along} $D$ is the fiber product  over $C$ of each $\mathscr{C}_{p_j}\to C\times \mathrm{Exp}(C|p_j)$: \[\mathscr{C}:= \mathscr{C}_1\times_C \cdots\times_C \mathscr{C}_\ell\to C\times \left(\prod_{j=1}^\ell \mathrm{Exp}(C,p_j)\right) = C\times \mathrm{Exp}(C|D).\]
 \end{definition}

Concretely, an object in $\mathscr{C}_S$ is a family of curves that is constant on the "core curve $C$", or more precisely, an expansion $\tilde C$ over a scheme $S$ along a divisor $D$ over $S$ is a triangle \[\begin{tikzcd}
	{\tilde{C}} && {{C \times S}} \\
	& S
	\arrow[from=1-1, to=1-3]
	\arrow[from=1-1, to=2-2]
	\arrow[from=1-3, to=2-2]
\end{tikzcd}\] such that each geometric fiber, $\tilde C_s \ra C$ is an expansion along $D$ in the sense of Definition \ref{defn:singleexp}. 

\begin{definition}
    Let $\tilde{C}\xrightarrow{\pi} C$ be an expansion over a scheme $S$ of the curve $C$ along $D=p_1+\dots+ p_\ell$.
    Then there are $\ell$ sections $\tilde p_j: S \ra \tilde C$ obtained via the proper transform of the sections $p_j$ as in \ref{defn:scrCtilp}. For each geometric point $s \in S$ we define the \textit{endpoint} of $\tilde C_s$ along $p_j$ as $\tilde p_j(s)$.
\end{definition}


     

\subsection{Logarithmic Hilbert scheme}

\begin{definition}
  Let $\mathrm{Hilb}^n(\mathscr{C})$ be the stack $\mathsf{Sch}_{/\mathrm{Exp}(C|D)}^{\mathrm{op}}\to \mathsf{Grpd}$ defined by $$(\varphi: S\to \mathrm{Exp}(C|D))\mapsto \{\text{length }n \text{ zero-dimensional subschemes on } \mathscr{C}_S\},$$ where isomorphisms on the right hand side are induced by automorphisms of $\mathscr{C}_S$ relative to $S.$
\end{definition}

\begin{remark}
    By construction, the map $\mathrm{Hilb}^n(\mathscr{C})\to \mathrm{Exp}(C|D)$ is representable, so $\mathrm{Hilb}^n(\mathscr{C})$ is algebraic. Therefore, all the stabilizer groups in $\mathrm{Hilb}^n(\mathscr{C})$ are subgroups of the stabilizers of $\mathrm{Exp}(C|D),$ which are algebraic tori. In particular, the finite stabilizer groups in $\mathrm{Hilb}^n(\mathscr{C})$ are products of groups of roots of unity.
\end{remark}

Concretely, a geometric point in $\mathrm{Hilb}^n(\mathscr{C})$ is given by a pair $(\tilde{C},Z),$ where $\pi: \tilde{C}\to C$ is an expanded degeneration along $D,$ and $Z\subset \tilde{C}$ is a length $n$ closed subscheme. The stabilizer group of $(\tilde{C},Z)$ is the subgroup of the rubber torus associated to $\pi: \tilde{C}\to C$ that preserves $Z\subset\tilde{C}.$ 

\begin{notation}
    Given a curve with logarithmic structure, we may use the term \emph{interior} to denote the open subset of points with trivial characteristic monoid relative to the base and the term \emph{special point} to denote a point on the curve with non-trivial logarithmic structure - this can either be the boundary divisor of a smooth curve, a node, or the toric boundary of a bubble.
\end{notation}

\begin{definition}
  A point $(\tilde{C},Z)\in \mathrm{Hilb}^n(\mathscr{C})$ is \textit{logarithmically flat} if $Z$ has no support on any special point of $\tilde{C}$. $(\tilde{C}\xrightarrow{\pi} C,Z)$ is \textit{stable} if each connected component of the interior supports $Z$.

  The \emph{logarithmic Hilbert scheme} of $(C,D),$ denoted as $\Hilb^n(C|D),$ is defined as the open substack of $\mathrm{Hilb}^n(\mathscr{C})$ consisting of stable, logarithmically flat subschemes.
\end{definition}

\section{Construction of $\Hilb^n(C|D)_{\le {\text{\textbf{\textit{i}}}}}$}\label{sec:interm}

\subsection{Intermediate stability conditions} We will construct the intermediate moduli spaces $\Hilb^n(C|D)_{\le \bi}$ by specifying how many points can be supported on the \textit{endpoints} of the bubbles, namely the sections $\tilde{p}_i$ from Definition \ref{defn:scrCtilp}.

\begin{definition}
    Let $\bi=(i_1,\dots, i_\ell) \in \mathbb{Z}_{\geq 0}^\ell.$ We define $\Hilb^n(C|D)_{\le \bi}$ as the subfunctor of the relative Hilbert scheme $\Hilb^n(\mathscr{C}/\mathrm{Exp}(C|D))$ that sends $(f: S\to \mathrm{Exp}(C|D))$ to the groupoid of length $n$ closed subschemes $Z\subset \mathscr{C}_S$ such that: \begin{enumerate}
        \item the subgroup given by $\{\sigma\in \mathrm{Aut}_S(\mathscr{C}_S\to C\times S)\mid \sigma(Z) = Z\}$ is finite,
        \item the open subscheme $Z\smallsetminus \tilde{p}(\mathrm{Exp}(C|D))_S$ is supported away from the nodes\footnote{Equipping $Z$ with the pullback log structure from $\mathscr{C}\to \mathrm{Exp}(C|D),$ this is the same as asking that $Z$ is logarithmically flat over $S.$},
        \item over each geometric point $s\in S,$ $Z_s\subset \mathscr{C}_s$ has multiplicity at most $i_j$ at the endpoint $\tilde{p}_j(s),$
        \item over each geometric point $s\in S,$ let $\mathscr{C}^{(j)}_s\subset \mathscr{C}_s$ be the component of $\mathscr{C}_s$ that contains $\tilde{p}_j(s),$ then $Z_s\cap \mathscr{C}^{(j)}_s\subset \mathscr{C}_s$ has length greater or equal to $i_j+1.$
    \end{enumerate}
The stack $\Hilb^n(C|D)_{\leq \bi}$ admits a universal object as a pair $(\mathscr{C}_{\le \bi}, \mathscr{Z}_{\le \bi})$ with natural maps $\mathscr{Z}_{\le \bi}\hookrightarrow \mathscr{C}_{\le \bi}\to \mathscr{C}$ over $\Exp(C|p).$
\end{definition}

\begin{figure}[htp]
\resizebox{.45\textwidth}{!}{

\tikzset{every picture/.style={line width=0.75pt}} 

\begin{tikzpicture}[x=0.75pt,y=0.75pt,yscale=-1,xscale=1]

\draw [line width=0.75]    (102,50) -- (406,50) ;
\draw [color={rgb, 255:red, 74; green, 144; blue, 226 }  ,draw opacity=1 ]   (151,39) -- (100,121) ;
\draw [color={rgb, 255:red, 74; green, 144; blue, 226 }  ,draw opacity=1 ]   (101,90) -- (150,171) ;
\draw [color={rgb, 255:red, 74; green, 144; blue, 226 }  ,draw opacity=1 ]   (250,40) -- (199,122) ;
\draw [color={rgb, 255:red, 74; green, 144; blue, 226 }  ,draw opacity=1 ]   (380,40) -- (329,122) ;
\draw  [color={rgb, 255:red, 0; green, 0; blue, 0 }  ,draw opacity=1 ][fill={rgb, 255:red, 0; green, 0; blue, 0 }  ,fill opacity=1 ] (145,160.5) .. controls (145,161.33) and (144.33,162) .. (143.5,162) .. controls (142.67,162) and (142,161.33) .. (142,160.5) .. controls (142,159.67) and (142.67,159) .. (143.5,159) .. controls (144.33,159) and (145,159.67) .. (145,160.5) -- cycle ;
\draw  [color={rgb, 255:red, 0; green, 0; blue, 0 }  ,draw opacity=1 ][fill={rgb, 255:red, 0; green, 0; blue, 0 }  ,fill opacity=1 ] (208,110.5) .. controls (208,111.33) and (207.33,112) .. (206.5,112) .. controls (205.67,112) and (205,111.33) .. (205,110.5) .. controls (205,109.67) and (205.67,109) .. (206.5,109) .. controls (207.33,109) and (208,109.67) .. (208,110.5) -- cycle ;
\draw  [color={rgb, 255:red, 208; green, 2; blue, 27 }  ,draw opacity=1 ][fill={rgb, 255:red, 208; green, 2; blue, 27 }  ,fill opacity=1 ] (128.47,126.73) -- (129.27,127.53) -- (126.3,130.5) -- (129.27,133.47) -- (128.47,134.27) -- (125.5,131.3) -- (122.53,134.27) -- (121.73,133.47) -- (124.7,130.5) -- (121.73,127.53) -- (122.53,126.73) -- (125.5,129.7) -- cycle ;
\draw  [color={rgb, 255:red, 208; green, 2; blue, 27 }  ,draw opacity=1 ][fill={rgb, 255:red, 208; green, 2; blue, 27 }  ,fill opacity=1 ] (341.27,109.91) -- (341.28,111.04) -- (337.07,111.06) -- (337.09,115.27) -- (335.96,115.28) -- (335.94,111.07) -- (331.73,111.09) -- (331.72,109.96) -- (335.93,109.94) -- (335.91,105.73) -- (337.04,105.72) -- (337.06,109.93) -- cycle ;
\draw  [color={rgb, 255:red, 208; green, 2; blue, 27 }  ,draw opacity=1 ][fill={rgb, 255:red, 208; green, 2; blue, 27 }  ,fill opacity=1 ] (339.42,106.67) -- (340.22,107.47) -- (337.29,110.49) -- (340.27,113.47) -- (339.48,114.28) -- (336.51,111.31) -- (333.58,114.33) -- (332.78,113.53) -- (335.71,110.51) -- (332.73,107.53) -- (333.52,106.72) -- (336.49,109.69) -- cycle ;
\draw  [fill={rgb, 255:red, 0; green, 0; blue, 0 }  ,fill opacity=1 ] (338,110.5) .. controls (338,109.67) and (337.33,109) .. (336.5,109) .. controls (335.67,109) and (335,109.67) .. (335,110.5) .. controls (335,111.33) and (335.67,112) .. (336.5,112) .. controls (337.33,112) and (338,111.33) .. (338,110.5) -- cycle ;

\draw  [color={rgb, 255:red, 208; green, 2; blue, 27 }  ,draw opacity=1 ][fill={rgb, 255:red, 208; green, 2; blue, 27 }  ,fill opacity=1 ] (209.47,106.73) -- (210.27,107.53) -- (207.3,110.5) -- (210.27,113.47) -- (209.47,114.27) -- (206.5,111.3) -- (203.53,114.27) -- (202.73,113.47) -- (205.7,110.5) -- (202.73,107.53) -- (203.53,106.73) -- (206.5,109.7) -- cycle ;
\draw  [fill={rgb, 255:red, 0; green, 0; blue, 0 }  ,fill opacity=1 ] (208,110.5) .. controls (208,109.67) and (207.33,109) .. (206.5,109) .. controls (205.67,109) and (205,109.67) .. (205,110.5) .. controls (205,111.33) and (205.67,112) .. (206.5,112) .. controls (207.33,112) and (208,111.33) .. (208,110.5) -- cycle ;

\draw  [color={rgb, 255:red, 208; green, 2; blue, 27 }  ,draw opacity=1 ][fill={rgb, 255:red, 208; green, 2; blue, 27 }  ,fill opacity=1 ] (360.45,73.45) -- (361.25,74.25) -- (358.27,77.23) -- (361.25,80.2) -- (360.45,81) -- (357.47,78.03) -- (354.5,81) -- (353.7,80.2) -- (356.67,77.23) -- (353.7,74.25) -- (354.5,73.45) -- (357.47,76.42) -- cycle ;
\draw  [color={rgb, 255:red, 208; green, 2; blue, 27 }  ,draw opacity=1 ][fill={rgb, 255:red, 208; green, 2; blue, 27 }  ,fill opacity=1 ] (228.27,77.23) -- (229.08,78.03) -- (226.1,81) -- (229.08,83.97) -- (228.27,84.77) -- (225.3,81.8) -- (222.33,84.77) -- (221.53,83.97) -- (224.5,81) -- (221.53,78.03) -- (222.33,77.23) -- (225.3,80.2) -- cycle ;
\draw  [color={rgb, 255:red, 208; green, 2; blue, 27 }  ,draw opacity=1 ][fill={rgb, 255:red, 208; green, 2; blue, 27 }  ,fill opacity=1 ] (139.5,57.95) -- (140.3,58.75) -- (137.33,61.73) -- (140.3,64.7) -- (139.5,65.5) -- (136.53,62.53) -- (133.55,65.5) -- (132.75,64.7) -- (135.73,61.73) -- (132.75,58.75) -- (133.55,57.95) -- (136.53,60.92) -- cycle ;
\draw  [color={rgb, 255:red, 208; green, 2; blue, 27 }  ,draw opacity=1 ][fill={rgb, 255:red, 208; green, 2; blue, 27 }  ,fill opacity=1 ] (125.5,80) -- (126.3,80.8) -- (123.33,83.77) -- (126.3,86.75) -- (125.5,87.55) -- (122.53,84.58) -- (119.55,87.55) -- (118.75,86.75) -- (121.73,83.77) -- (118.75,80.8) -- (119.55,80) -- (122.53,82.97) -- cycle ;
\draw  [color={rgb, 255:red, 208; green, 2; blue, 27 }  ,draw opacity=1 ][fill={rgb, 255:red, 208; green, 2; blue, 27 }  ,fill opacity=1 ] (188.5,45.95) -- (189.3,46.75) -- (186.33,49.73) -- (189.3,52.7) -- (188.5,53.5) -- (185.53,50.53) -- (182.55,53.5) -- (181.75,52.7) -- (184.73,49.73) -- (181.75,46.75) -- (182.55,45.95) -- (185.53,48.92) -- cycle ;
\draw  [color={rgb, 255:red, 0; green, 0; blue, 0 }  ,draw opacity=1 ][fill={rgb, 255:red, 0; green, 0; blue, 0 }  ,fill opacity=1 ] (146,49.5) .. controls (146,50.33) and (145.33,51) .. (144.5,51) .. controls (143.67,51) and (143,50.33) .. (143,49.5) .. controls (143,48.67) and (143.67,48) .. (144.5,48) .. controls (145.33,48) and (146,48.67) .. (146,49.5) -- cycle ;
\draw  [color={rgb, 255:red, 0; green, 0; blue, 0 }  ,draw opacity=1 ][fill={rgb, 255:red, 0; green, 0; blue, 0 }  ,fill opacity=1 ] (246,49.5) .. controls (246,50.33) and (245.33,51) .. (244.5,51) .. controls (243.67,51) and (243,50.33) .. (243,49.5) .. controls (243,48.67) and (243.67,48) .. (244.5,48) .. controls (245.33,48) and (246,48.67) .. (246,49.5) -- cycle ;
\draw  [color={rgb, 255:red, 0; green, 0; blue, 0 }  ,draw opacity=1 ][fill={rgb, 255:red, 0; green, 0; blue, 0 }  ,fill opacity=1 ] (375,50.5) .. controls (375,51.33) and (374.33,52) .. (373.5,52) .. controls (372.67,52) and (372,51.33) .. (372,50.5) .. controls (372,49.67) and (372.67,49) .. (373.5,49) .. controls (374.33,49) and (375,49.67) .. (375,50.5) -- cycle ;
\draw  [color={rgb, 255:red, 208; green, 2; blue, 27 }  ,draw opacity=1 ][fill={rgb, 255:red, 208; green, 2; blue, 27 }  ,fill opacity=1 ] (306.47,46.73) -- (307.27,47.53) -- (304.3,50.5) -- (307.27,53.47) -- (306.47,54.27) -- (303.5,51.3) -- (300.53,54.27) -- (299.73,53.47) -- (302.7,50.5) -- (299.73,47.53) -- (300.53,46.73) -- (303.5,49.7) -- cycle ;
\draw  [fill={rgb, 255:red, 0; green, 0; blue, 0 }  ,fill opacity=1 ] (305,50.5) .. controls (305,49.67) and (304.33,49) .. (303.5,49) .. controls (302.67,49) and (302,49.67) .. (302,50.5) .. controls (302,51.33) and (302.67,52) .. (303.5,52) .. controls (304.33,52) and (305,51.33) .. (305,50.5) -- cycle ;

\draw (135,24) node [anchor=north west][inner sep=0.75pt]  [font=\normalsize] [align=left] {$\displaystyle p_{1}$};
\draw (235,24) node [anchor=north west][inner sep=0.75pt]   [align=left] {$\displaystyle p_{2}$};
\draw (295,24) node [anchor=north west][inner sep=0.75pt]   [align=left] {$\displaystyle p_{3}$};
\draw (363,24) node [anchor=north west][inner sep=0.75pt]   [align=left] {$\displaystyle p_{4}$};

\end{tikzpicture}
}
\caption{An example of a point in $\Hilb^{10}(C|D)_{\le \boldsymbol{i}}$} with $\boldsymbol{i}=(0,1,1,2)$. Note that it would not be an example for $\boldsymbol{i}=(1,1,1,2)$ or $\boldsymbol{i}=(0,2,1,2)$.
\label{fig:intHilb}
\end{figure} 

\begin{remark}
When $\bi=(0,\dots, 0)$ we recover the logarithmic Hilbert scheme. For any $\bi\ge (n,\dots ,n)$, we recover the classical Hilbert scheme $\Hilb^n(C)$.
\end{remark}

\begin{remark}
    $\Hilb^n(C|D)_{\leq \bi}$ is an open substack of $\Hilb^n(\mathscr{C}/\Exp(C|D))$ defined by the above conditions. The condition $(4)$ ensures separatedness of the moduli space. 
\end{remark}

\subsection{$\textbf{\text{Hilb}}^{\boldsymbol{n}}{(\boldsymbol{C}|\boldsymbol{D})}_{\leq \bi}$ and Hassett spaces}\label{subsec:Hilbquot}

In this section, we relate $\Hilb^n(C|D)_{\le \bi}$ to quotient stacks of Hassett spaces with heavy-light weights by the action of the symmetric group.

We will consider the Hassett space $\Mbar_{g,\mathcal{A}}$ with $\mathcal{A} \in \mathbb{Q}_{\ge 0}^{\ell + n},$ where the endpoints will be the first $\ell$ marked points ("heavy points"), the other $n$ marked points ("light points") are permuted by $S_n.$ An $S_n$-orbit in $\Mbar_{g, \cA}$ corresponds to a geometric point in $\Hilb^n(C|D)_{\leq \bi}.$

\begin{definition}\label{defn:wtAbi}
    Let $\bi=(i_1,\dots,i_\ell) \in (\mathbb{Z}\cap [0,n])^\ell$ and define the set of weights \begin{equation} \label{hassettweights}\mathcal{A}:=\left(w_1, \dots ,w_\ell, \delta_n, \dots, \delta_n\right) \in (\mathbb{Q} \cap (0,1])^{\ell +n}\end{equation} with $w_j=1-i_j\delta_n$ and $0<\delta_n<\frac{1}{2n}.$ 
\end{definition}

Note that $\delta_n$ is small enough that: \begin{enumerate}
    \item $n\delta_n < 1$, which implies all the light points can collide away from the heavy points;
    \item $w_j + w_k>1 \ $ for each $  1\le j<k\le \ell$, which implies no two heavy points are allowed to collide;
    \item $i_j\delta_n+w_j=1$ and $(i_j+1)\delta_n + w_j>1$, which implies at most $i_j$ many light points will be allowed to lie on the $j$-th heavy point.
\end{enumerate}

 Let $\mathcal{A'}:=\left(w_1, \dots , w_\ell \right) \in (\mathbb{Q} \cap (0,1])^{\ell}$ be the vector consisting of only heavy weights. 

\begin{remark}
    The weights $\cA$ and $\cA'$ depend on the vector $\bi.$ We omit this from the notation when there is no risk of confusion.
\end{remark}

By \cite[Theorem 4.3]{hassett} there exists a forgetful morphism of stacks $$\phi_{\mathcal{A}, \mathcal{A}'}: \Mbar_{g,\mathcal{A}} \ra \Mbar_{g,\mathcal{A}'}$$ where given a point $(\tilde C,\tilde p_1 \dots \tilde p_\ell, q_1,\dots,q_n) \in \Mbar_{g, \cA}$, the curve $\phi_{\mathcal{A}, \mathcal{A}'}(\tilde C,\tilde p_1 \dots \tilde p_\ell, q_1,\dots,q_n)=(C, p_1,\dots,p_l)$ is obtained by successively collapsing components of $\tilde{C}$ along which $K_{\tilde C}+ w_1\tilde p_1+\dots w_\ell \tilde p_\ell$ fails to be ample.

Moreover, by \cite[Proposition 5.1]{hassett}, the weights $\mathcal{A}'$ and $\mathbf{1} = (1,\dots,1)\in \mathbb{Q}^{\ell}$ are in the same chamber, so there is an isomorphism $\Mbar_{g,\mathcal{A'}} \cong \Mbar_{g,\ell}$. We will denote with $\phi_{\mathcal{A}}: \Mbar_{g,\mathcal{A}} \ra \Mbar_{g,\ell}$ the induced morphism.

Let us focus our attention on the preimage of $(C,p_1,\dots,p_\ell)$. 

\begin{lemma}\label{lem:Aexp}
    Let $(\tilde C,\tilde p_1, \dots, \tilde p_\ell, q_1,\dots,q_n) \in \phi_{\mathcal{A}}^{-1}(C,p_1,\dots, p_\ell)$ for $(C,p_1\dots,p_\ell) \in \M_{g,\ell}$. Then the underlying curve $(\tilde C, \tilde p_1, \dots, \tilde p_\ell)$ is an expansion in $\Exp(C|D)$ with $D=p_1+\dots p_\ell$.
\end{lemma}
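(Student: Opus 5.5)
We plan to analyze the structure of the reduction map $\phi_{\mathcal{A}}$ on a single geometric fibre. Write $(\tilde C,\tilde p_1,\dots,\tilde p_\ell,q_1,\dots,q_n)$ for an $\mathcal{A}$-stable curve mapping to $(C,p_1,\dots,p_\ell)$ with $C$ smooth. By \cite[Theorem 4.3]{hassett}, $\phi_{\mathcal{A}}$ first forgets the light points. It then stabilizes with respect to $\mathcal{A}'$, successively contracting the components on which $K_{\tilde C}+\sum_j w_j\tilde p_j$ fails to be ample. This yields a birational contraction $\pi:\tilde C\to C$ carrying $\tilde p_j$ to $p_j$.

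\textbf{Step 1: the contracted components.} Since $C$ is smooth, it is irreducible. Hence exactly one component $C_0\subset\tilde C$ maps isomorphically onto $C$, and every other component is contracted. We show that each contracted component $E$ is a smooth rational curve. The arithmetic genus is preserved and $C_0$ already has genus $g$, so the dual graph of $\tilde C$ is a tree and every other component is smooth rational. The same genus count rules out self-nodes.

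\textbf{Step 2: the shape of the tree.} Consider a contracted subtree $T$ attached to $C_0$. It must contract to a single point of $C$, and after contraction the only special points of $(C,p_1,\dots,p_\ell)$ are the $p_j$. We argue that the image point must be some $p_j$. If a subtree contained no heavy point, then each leaf component $E$ of it has only one node and carries only light points. The $\mathcal{A}$-stability condition $\deg(K_{\tilde C}+\sum \text{weights})|_E>0$ requires total weight $>1$ on $E$ beyond the node, but $n\delta_n<1$, so this is impossible. Hence every leaf of every contracted subtree carries a heavy point. Because heavy points cannot collide ($w_j+w_k>1$), and because the image of a heavy point $\tilde p_j$ is $p_j$, a subtree containing two heavy points $\tilde p_j,\tilde p_k$ would map both to the same point of $C$. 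That would force $p_j=p_k$, a contradiction. So each subtree contains exactly one heavy point $\tilde p_j$, lies over $p_j$, and has exactly one leaf. It is therefore a chain of $\mathbb{P}^1$'s: the end of the chain carries $\tilde p_j$, and the intermediate components have exactly two nodes. At most one such chain lies over each $p_j$.

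\textbf{Step 3: identification with an expansion.} A tree of this shape, namely $C_0\cong C$ with a chain of rational curves glued at $p_j$ and $\tilde p_j$ a smooth point on the last component, is precisely what iterated one-step expansions along $p_j$ produce (Definition~\ref{defn:singleexp}). Each additional bubble is the deformation to the normal cone at the current endpoint. To make the isomorphism over $C$ explicit, we will check that every chain of $\mathbb{P}^1$'s with two marked boundary points per component is isomorphic to the standard chain, compatibly with the contraction. This holds because each component with its two special points (node/node or node/endpoint) is $\mathbb{P}^1$ with $0,\infty$, and the identification with $\mathbb{P}(N\oplus\mathbf 1)$ is unique up to the rubber torus.

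The main obstacle we expect is Step 2: ruling out branching, and ruling out a chain lying over a non-marked point. We will handle it via the leaf-weight inequality $n\delta_n<1$ together with the fact that the contraction maps heavy points to the $p_j$. The remaining identification is standard.
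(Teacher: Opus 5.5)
Your proof is correct and follows essentially the same route as the paper: a genus count shows that the contracted part is a union of rational trees, each attached to $C_0\cong C$ at a single node. Because the images $p_j$ are distinct, each tree contains exactly one heavy point, and the weights then force each tree to be a chain. Where the paper applies Losev--Manin combinatorics to $R^{(j)}$, treating the attaching node as a second weight-one point, you prove the chain property directly with the leaf inequality $n\delta_n<1$. That inequality also explicitly rules out contracted trees lying over unmarked points, a case the paper leaves implicit.
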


\begin{proof} 
Consider $R^{(i)}:=\phi_{\mathcal{A}}^{-1}(p_i)\subset \tilde{C}.$ This is a connected subcurve of $\tilde{C}$ that gets contracted to $p_i$ under the map $\phi_{\mathcal{A}}.$ Because both $\tilde{C}$ and $C$ have the same arithmetic genus, $R^{(i)}$ must be a tree of rational curves. To show that $\tilde{C}\to C$ is an expansion along $D,$ it suffices to show that each $R^{(i)}$ is a chain of rational curves. If this is true, then the curve $(\tilde{C},\tilde{p}_1,\dots,\tilde{p}_\ell)$ defines a point in $\mathrm{Exp}(C|D).$

If the subcurve $R^{(i)}$ is connected to the rest of $\tilde{C}$ via more than one point, then the dual graph of $\tilde{C}$ contains a loop, and so the arithmetic genus of $\tilde{C}$ is strictly greater than $g,$ contradiction. Thus, let $\overline{p}_i\in R^{(i)}$ be the unique node that connects $R^{(i)}$ with the rest of $\tilde{C}.$ The curve $R^{(i)}$ with the marked points of $\tilde{C}$ that are on $R^{(i)}$ and $\overline{p}_i$ is a weighted stable curve, where $\overline{p}_i$ is assigned weight 1. By construction, $\phi_{\mathcal{A}}^{-1}(\tilde{p}_i)$ are disjoint for distinct $i,$ so among the marked points of $\tilde{C}$ that are on $R^{(i)},$ the only heavy point is $\tilde{p}_i.$ Therefore, the only two heavy points on the weighted stable curve $R^{(i)}$ are $\tilde{p}_i$ and $\overline{p}_i,$ and the rest are light points. Now the combinatorics of the Losev--Manin spaces implies that $R^{(i)}$ must be a chain of rational curves.
\end{proof}

\begin{remark}\label{rem:Aexp}
    We explain how the point-wise statement in the lemma lifts to the level of moduli stacks, namely that there is a well-defined map $\phi^{-1}_{\mathcal{A}}(C, p_1,\dots,p_\ell)\to \mathrm{Exp}(C|D).$ Indeed, let $\mathfrak{M}_{g, \ell}$ be the stack of nodal curves of arithmetic genus $g$ and $\ell$ marked points and let $\phi:\mathfrak{M}_{g,\ell}\to \Mbar_{g, \ell}$ be the stabilization map. Because $(C,p_1,\dots, p_\ell) \in \mathcal{M}_{g,\ell}$,  the fiber $\Mbar_C:=\phi^{-1}(C,p_1,\dots, p_\ell)$ is the stack of all nodal genus $g$ curves that are given by attaching trees of rational components (possibly unstable) to $C$ along the points $p_i$. Let $\Mbar_C^{\mathrm{path}}\subset \Mbar_{C}$ be the locally closed substack where each union of rational components is a path. By construction, $\Mbar_C^{\mathrm{path}}$ parametrizes expansions of $C$ along $D,$ and their automorphism groups match by Remark \ref{rem:autgroups}, so $\Mbar_C^{\mathrm{path}}\cong \mathrm{Exp}(C|D).$ On the other hand, by definition the preimage $\phi_{\mathcal{A}}^{-1}(C, p_1,\dots,p_\ell)$ maps to $\Mbar_{C},$ and the above lemma proves that the map to $\Mbar_{C}$ lands in $\Mbar_{C}^{\mathrm{path}},$ hence there is a map of stacks $\phi_{\mathcal{A}}^{-1}(C, p_1,\dots,p_\ell)\to \mathrm{Exp}(C|D).$
    
    We note that a more straightforward way of defining the morphism is to use the characterization of $\mathrm{Exp}(\mathbb{A}^1|0)$ as an open substack in $\mathfrak{M}_{0,3}^{ss}$ and natural forgetful morphisms between moduli stacks of curves \cite[§2.5]{GraberVakil} \cite[§3.1]{ACFW}.
\end{remark}

\begin{remark}
    Note that in \cite[Therem 4.3]{hassett}, Hassett defines the forgetful maps $\phi_{\cA, \cA'}$ only for $2g-2+\ell>0$. Nonetheless, such maps can be defined even in the case of $C = \mathbb{P}^1$ and $|D| = 1,2$. In this case $\Mbar_{g,\mathcal{A}}$ and $\Mbar_{g,\ell}$ should be replaced by appropriate Artin stacks, and the forgetful map is still well-defined and representable, with fibers being schemes.
\end{remark}

\begin{proposition} \label{prop:quotient} Let $\bi=(i_1,\dots,i_\ell) \in (\mathbb{Z}\cap [0,n])^\ell$ and $\mathcal{A}$ defined as in (\ref{hassettweights}). Then there is a natural map \[[\phi_{\mathcal{A}}^{-1}(C,p_1,\dots,p_\ell)/S_n]\to \Hilb^n(C|D)_{\leq \bi}\]
over $\mathrm{Exp}(C|D)$ that is a relative coarse moduli space map\footnote{This is also termed `coarsening morphism' by David Rydh \cite[Appendix A]{ATWR}.}.
\end{proposition}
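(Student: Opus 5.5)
We first construct the map. Set $\mathcal{F}:=\phi_{\mathcal{A}}^{-1}(C,p_1,\dots,p_\ell)$. By Remark \ref{rem:Aexp} there is a morphism $\mathcal{F}\to \mathrm{Exp}(C|D)$, and the universal curve over $\mathcal{F}$ is the pullback of $\mathscr{C}$. Given a family $(\tilde C\to S,\tilde p_j,q_1,\dots,q_n)$ in $\mathcal{F}(S)$, we take the relative effective Cartier divisor $Z:=q_1+\dots+q_n$. This is a length $n$ closed subscheme of $\tilde C$, flat over $S$, and it lies in the smooth locus because the $q_k$ avoid nodes. Equivalently, $Z$ is the image of $\tilde C^{\times_S n}\supset (q_1,\dots,q_n)$ under the map to $\Sym^n_S(\tilde C^{\mathrm{sm}})=\Hilb^n_S(\tilde C^{\mathrm{sm}})$. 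This construction is $S_n$-invariant, so it descends to a morphism $\Psi:[\mathcal{F}/S_n]\to \Hilb^n(\mathscr{C})$ over $\mathrm{Exp}(C|D)$.

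Next we check that $\Psi$ lands in $\Hilb^n(C|D)_{\le\bi}$, which can be done on geometric points. Condition (2) holds because light points avoid nodes. Condition (3) holds because $i_j\delta_n+w_j=1$ while $(i_j+1)\delta_n+w_j>1$, so at most $i_j$ light points meet $\tilde p_j$. For (4) and (1), we use Hassett stability: $K_{\tilde C}+\sum w_j\tilde p_j+\delta_n\sum q_k$ is ample. On the bubble containing $\tilde p_j$, which has one node, this ampleness gives $-1+w_j+\delta_n m>0$, where $m$ is the number of light points on that bubble. This forces $m\delta_n>i_j\delta_n$, so $m\ge i_j+1$, which is (4). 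If $\tilde p_j$ lies on the core $C$, condition (4) holds vacuously or must be read as concerning the core component; we will treat that case separately. On a middle bubble, which has two nodes, ampleness forces at least one light point. This means every $\Gm$ factor of the rubber torus acts nontrivially on $Z$, so the stabilizer is finite, which is (1). Conversely, every point of $\Hilb^n(C|D)_{\le\bi}$ lies off the nodes and the reduced points of its support can be ordered, so the same inequalities run backwards show that $\Psi$ is surjective on geometric points.

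To prove that $\Psi$ is a relative coarse moduli map over $\mathrm{Exp}(C|D)$, we compare both sides étale locally over $\mathrm{Exp}(C|D)$. Over a chart $T\to \mathrm{Exp}(C|D)$ with universal expansion $\tilde C_T$, the stack $\mathcal{F}_T$ is the open substack of $(\tilde C_T^{\mathrm{sm}})^{n}$ cut out by the Hassett inequalities. It is a scheme, because the Hassett space is relative to $\mathrm{Exp}$: the automorphisms of the underlying expansion are already accounted for on the base. Similarly, $\Hilb^n(C|D)_{\le\bi}\times T$ is the open subscheme of $\Hilb^n_T(\tilde C_T^{\mathrm{sm}})=\Sym^n_T(\tilde C_T^{\mathrm{sm}})$ cut out by conditions (1)--(4). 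The key claim is that the first open set is exactly the preimage of the second under the quotient map $(\tilde C^{\mathrm{sm}}_T)^n\to \Sym^n_T(\tilde C^{\mathrm{sm}}_T)$, which is the pointwise comparison from the previous paragraph. Since $\Sym^n_T(X)=X^n/S_n$ is the coarse space of $[X^n/S_n]$ for $X$ smooth over $T$, and coarse spaces are compatible with restriction to $S_n$-stable saturated opens and with flat base change on $T$, the map $[\mathcal{F}_T/S_n]\to \Hilb^n(C|D)_{\le\bi}\times T$ is a coarse space map. Compatibility with base change lets these local statements glue.

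The main obstacle is identifying $\mathcal{F}\times_{\mathrm{Exp}}T$ with the open subscheme of $(\tilde C_T^{\mathrm{sm}})^n$, that is, showing that $\mathcal{F}\to\mathrm{Exp}(C|D)$ is representable with the expected fibers. This requires that Remark \ref{rem:Aexp}'s map is compatible with the universal curves, and that Hassett's openness of stability matches our open conditions in families rather than only on geometric points. We would handle this by writing down an inverse: an ordered tuple of sections on an expansion satisfying the inequalities is, by the ampleness computation above, a Hassett-stable curve whose stabilization is $C$. This gives $\mathcal{F}_T$ as that open subscheme.
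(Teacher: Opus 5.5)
Your proposal is correct and follows essentially the paper's route: you build the map from the universal family via $Z=q_1+\dots+q_n$, descend it by $S_n$-invariance, and check the coarse-space property after pulling back to a chart of $\mathrm{Exp}(C|D)$, where it becomes $[X^n/S_n]\to \Sym^n X$ for $X=\tilde C^{\mathrm{sm}}_T$. Your version is more careful than the paper's, since you restrict to an $S_n$-saturated open and explicitly verify conditions (1)--(4). One small nit: $\mathrm{Exp}(C|D)$ has $\mathbb{G}_m$ stabilizers and so admits no étale atlas by schemes, so you should work on the smooth chart $\prod_{i}\bigsqcup_n\mathbb{A}^n$ as the paper does; your flat-base-change argument already covers that chart.
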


\begin{proof}
    The target $\Hilb^n(C|D)_{\leq \bi}$ represents the moduli problem of pairs $(\tilde{C},p_1,\dots,p_\ell, Z),$ where: \begin{enumerate}
        \item  $\tilde{C}$ is a nodal curve of arithmetic genus $g,$
        \item $p_1,\dots,p_\ell\in \tilde{C}$ are distinct smooth points,
        \item $Z$ is a length-$n$ divisor supported on the smooth locus of $\tilde{C}$ \end{enumerate} such that $(\tilde{C}, p_1,\dots,p_\ell, Z)$ satisfies the symmetrized $\mathcal{A}$-stability condition--the weights $\mathcal{A}$ set up in Section \ref{subsec:Hilbquot} is $S_n$-invariant. Therefore, the universal family over $\phi_{\mathcal{A}}^{-1}(C,p_1,\dots,p_\ell)$ defines a map to $\Hilb^n(C|D)_{\leq \bi}.$ Since the map is $S_n$-invariant, it descends to a map from the $S_n$-quotient stack $[\phi_{\mathcal{A}}^{-1}(C,p_1,\dots,p_\ell)/S_n]\to \Hilb^n(C|D)_{\leq \bi}$ that induces a bijection on geometric points.

        To show that this map of stacks is a relative coarse moduli space, for any morphism $S\to \Exp(C|D),$ we pull back the universal family $\mathscr{C}\to \mathrm{Exp}(C|D)$ to get a family of curves $\mathscr{C}_S\to S.$ Let $\mathscr{C}_S^{\mathrm{sm}}\subset \mathscr{C}_S$ be the smooth locus of the family. Using $(\mathscr{C}^{\mathrm{sm}})^n_S$ resp. $\mathrm{Sym}^{n}\mathscr{C}^{\mathrm{sm}}_S$ to denote the $n$-th fiber power resp. symmetric power relative to $S,$ the base change of the morphism $[\phi_{\mathcal{A}}^{-1}(C,p_1,\dots,p_\ell)/S_n]\to \Hilb^n(C|D)_{\leq \bi}$ is the relative coarse moduli space morphism $[(\mathscr{C}^{\mathrm{sm}})^n_S/S_n]\to \mathrm{Sym}^{n}\mathscr{C}^{\mathrm{sm}}_S.$ Applying the argument to the smooth chart $\prod_{i=1}^{\ell}(\bigsqcup_{n}\mathbb{A}^n)\to \Exp(C|D)$ (see for instance \cite[§1]{MT16}), we see that the morphism satisfies the definition of a coarse moduli space map given in \cite[Theorem 3.1]{AOV}.

\end{proof}

    

We recall the following notation for the weight reduction maps that will be revisited in Section \ref{sec:inducstep}.

\begin{definition}[Reduction maps]
    Let $\bi_1, \bi_2\in (\mathbb{Z}\cap [0,n])^\ell$ such that $\bi_1\leq \bi_2,$ and let $\mathcal{A}_{\bi_1}, \mathcal{A}_{\bi_2}$ be the associated weights from Definition \ref{defn:wtAbi}. Denote $$\rho_{\mathcal{A}_{\bi_1},\mathcal{A}_{\bi_2}}: \Mbar_{g, \mathcal{A}_{\bi_1}}\to \Mbar_{g, \mathcal{A}_{\bi_2}}$$ as the weight reduction map of Hassett spaces \cite[Theorem 4.1]{hassett}. Abusing notation, we also denote its restriction to $\phi_{\mathcal{A}_{\bi_1}}^{-1}(C, p_1,\dots,p_\ell)\to \phi_{\mathcal{A}_{\bi_2}}^{-1}(C, p_1,\dots,p_\ell)$ as $\rho_{\mathcal{A}_{\bi_1},\mathcal{A}_{\bi_2}}$ or its shorthand $\rho.$

    The symmetric group $S_n$ acts on both $\phi_{\mathcal{A}_{\bi_i}}^{-1}(C, p_1,\dots,p_\ell)$ ($i = 1,2$) by permuting the light points, and $\rho$ is $S_n$-equivariant, so the map descends to a map of the $S_n$-quotient stacks and then to their relative coarse moduli spaces $\Hilb^n(C|D)_{\leq \bi_1}\to \Hilb^n(C|D)_{\leq \bi_2}.$
\end{definition}

\subsection{$\textbf{\text{Hilb}}^{\boldsymbol{n}}{(\boldsymbol{C}|\boldsymbol{D})}_{\leq \bi}$ is a smooth Deligne-Mumford stack}

\begin{proposition}\label{cor:Hilbsm}
    The intermediate moduli spaces $\Hilb^n(C| D)_{\leq \bi}$ are smooth DM stacks of dimension $n.$
\end{proposition}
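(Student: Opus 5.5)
We prove that $\Hilb^n(C|D)_{\leq \bi}$ is a smooth DM stack of dimension $n$ by working relative to $\mathrm{Exp}(C|D)$. We treat three properties in turn: Deligne--Mumford, smooth, and dimension $n$.

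\emph{Deligne--Mumford.} The map $\Hilb^n(C|D)_{\leq \bi}\to \mathrm{Exp}(C|D)$ is representable, being the restriction of the representable map $\Hilb^n(\mathscr{C})\to \mathrm{Exp}(C|D)$ to an open substack. Hence every stabilizer is a subgroup of a product of tori $\prod_j \mathbb{G}_m^{n_j}$. Condition (1) in the definition forces these stabilizers to be finite, so they are finite diagonalizable groups. In characteristic zero such groups are étale, and therefore the stack is Deligne--Mumford.

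\emph{Smoothness.} We use the smooth atlas $\prod_{j=1}^{\ell}\bigl(\bigsqcup_m \mathbb{A}^m\bigr)\to \mathrm{Exp}(C|D)$ with the families $\mathbb{C}_m\to\mathbb{A}^m$. After base change, $\Hilb^n(C|D)_{\leq \bi}$ becomes an open subscheme of the relative Hilbert scheme $\Hilb^n(\mathbb{C}_{\underline m}/\mathbb{A}^{\underline m})$. By condition (2) and the stability conditions (3) and (4), every subscheme in this open locus is supported away from the nodes of the fibres. It may meet the endpoint sections $\tilde p_j$, but those are smooth points of the fibres. So the open locus lies inside $\Hilb^n(\mathbb{C}^{\mathrm{sm}}/\mathbb{A}^{\underline m})=\mathrm{Sym}^n(\mathbb{C}^{\mathrm{sm}}/\mathbb{A}^{\underline m})$. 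Because $\mathbb{C}^{\mathrm{sm}}\to\mathbb{A}^{\underline m}$ is a smooth relative curve, its relative symmetric power is smooth over the base of relative dimension $n$; this is étale-locally the statement $\mathrm{Sym}^n\mathbb{A}^1\cong\mathbb{A}^n$. Hence the chart is smooth over $\mathbb{A}^{\underline m}$, and since $\mathrm{Exp}(C|D)$ is smooth, $\Hilb^n(C|D)_{\leq \bi}$ is smooth. Proposition~\ref{prop:quotient} gives an alternative route: the source there is a smooth stack, being an open part of a fibre of $\phi_{\mathcal{A}}$, and the coarsening of the $S_n$-action is fibrewise $[(\mathscr{C}^{\mathrm{sm}})^n/S_n]\to\mathrm{Sym}^n\mathscr{C}^{\mathrm{sm}}$.

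\emph{Dimension.} The relative dimension over $\mathrm{Exp}(C|D)$ is $n$, but $\mathrm{Exp}(C|D)$ has dimension $0$. Indeed, each chart $[\mathbb{A}^m/\mathbb{G}_m^m]$ has dimension $m-m=0$. Therefore $\dim \Hilb^n(C|D)_{\leq \bi}=n$. As a consistency check, this matches the dense open substack $\Hilb^n(C\setminus D)$, which is $n$-dimensional.

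The main obstacle is to make rigorous that the support of $Z$ avoids the singular locus of the whole family $\mathbb{C}_m\to\mathbb{A}^m$, not just the nodes of closed fibres, so that the open substack really sits inside the smooth locus. This is needed because the singular locus of $\mathbb{C}_m\to\mathbb{A}^m$ is exactly the union of the nodes in the fibres. It should then follow from condition (2) applied fibrewise, together with the fact that the endpoints $\tilde p_j$ lie in the smooth locus.
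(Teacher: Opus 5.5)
Your proof is correct and follows the paper's route. The paper likewise reduces to smoothness of $\Hilb^n(C|D)_{\leq \bi}\to\mathrm{Exp}(C|D)$ over the smooth stack $\mathrm{Exp}(C|D)$, and observes that every base change is an open substack of the relative symmetric power $\mathrm{Sym}^n\mathscr{C}^{\mathrm{sm}}_S$; your explicit Deligne--Mumford argument (finite subgroups of tori are \'etale in characteristic zero) and your dimension count ($\dim\mathrm{Exp}(C|D)=0$ plus relative dimension $n$) fill in points the paper leaves implicit, and the ``obstacle'' you flag is immediate because the family is flat, so its smooth locus is detected fibrewise and condition (2) alone suffices.
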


\begin{proof}

    Since $\prod_{i=1}^{\ell}(\bigsqcup_{n}\mathbb{A}^n)\to \Exp(C|D)$ form a smooth chart of $\mathrm{Exp}(C|D),$ the stack $\mathrm{Exp}(C|D)$ is smooth. It suffices to show that the map $\Hilb^n(C| D)_{\leq \bi}\to \mathrm{Exp}(C|D)$ is smooth. The smoothness of the morphism follows from the description in the proof of Proposition \ref{prop:quotient} that given any morphism $S\to \mathrm{Exp}(C|D),$ the pullback is an open immersion of $\mathrm{Sym}^{n}\mathscr{C}^{\mathrm{sm}}_S\to S,$ which is smooth.
\end{proof}

\section{The example case of $\Hilb^2(\PP^1|0)$}\label{sec:2P10}

This section introduces the concrete example of two points on the pair $(\PP^1|0).$ Complementary to our strategy of proving the main results, we have chosen to perform weighted blow-up on $\Hilb^2(\PP^1)$ in this section, as it motivates the presence of the weights and illustrates the toric structure of the moduli space more effectively.

\subsection{$\textbf{\text{Hilb}}^{\boldsymbol{2}}{\boldsymbol{(\PP^1)}}$ as a toric variety}
There is an isomorphism $\Hilb^2(\PP^1)\cong\PP^2$ given by degree two homogeneous polynomials on $\PP^1$ with a basis of symmetric polynomials:
$$\{[a_1: b_1], [a_2: b_2]\}\mapsto [a_1a_2:-(a_1b_2+a_2b_1): b_1b_2].$$

The toric boundary of $\PP^2$ corresponds to the following loci in $\Hilb^2(\PP^1)$: 
{The toric boundary of $\PP^2$ corresponds to the loci where one point lies on $0,$ one point lies on $\infty,$ and the anti-diagonal $a_1b_2 + a_2b_1 = 0.$}

The toric strata and their specializations are illustrated in Figure \ref{fig:P2}. 
\input{P2}

\subsection{First weighted blow-up}

{By definition, the moduli space $\Hilb(\PP^1|0)_{\le 1}$ parametrizes points on expansions of $\PP^1$, where we allow at most one point to be supported on the endpoint. Examples are pictured in Figure \ref{fig:pointsmoduli}. We recall that the points supported on the bubble are considered up to the action of the rubber torus action, which corresponds to the dense torus of the bubble $\PP^1$.}
\begin{figure}[htp]
\resizebox{.7\textwidth}{!}{

\tikzset{every picture/.style={line width=0.75pt}} 

\begin{tikzpicture}[x=0.75pt,y=0.75pt,yscale=-1,xscale=1]

\draw [color={rgb, 255:red, 74; green, 144; blue, 226 }  ,draw opacity=1 ]   (106,26) -- (84,85) ;
\draw [color={rgb, 255:red, 0; green, 0; blue, 0 }  ,draw opacity=1 ]   (88,40) -- (171,40) ;
\draw  [fill={rgb, 255:red, 0; green, 0; blue, 0 }  ,fill opacity=1 ] (163,39.5) .. controls (163,38.67) and (162.33,38) .. (161.5,38) .. controls (160.67,38) and (160,38.67) .. (160,39.5) .. controls (160,40.33) and (160.67,41) .. (161.5,41) .. controls (162.33,41) and (163,40.33) .. (163,39.5) -- cycle ;
\draw  [fill={rgb, 255:red, 0; green, 0; blue, 0 }  ,fill opacity=1 ] (102,40.5) .. controls (102,39.67) and (101.33,39) .. (100.5,39) .. controls (99.67,39) and (99,39.67) .. (99,40.5) .. controls (99,41.33) and (99.67,42) .. (100.5,42) .. controls (101.33,42) and (102,41.33) .. (102,40.5) -- cycle ;
\draw  [color={rgb, 255:red, 208; green, 2; blue, 27 }  ,draw opacity=1 ][fill={rgb, 255:red, 208; green, 2; blue, 27 }  ,fill opacity=1 ] (97.97,50.92) -- (98.77,51.73) -- (95.8,54.7) -- (98.77,57.67) -- (97.97,58.47) -- (95,55.5) -- (92.03,58.47) -- (91.23,57.67) -- (94.2,54.7) -- (91.23,51.73) -- (92.03,50.92) -- (95,53.9) -- cycle ;
\draw [color={rgb, 255:red, 74; green, 144; blue, 226 }  ,draw opacity=1 ]   (208,26) -- (186,85) ;
\draw  [color={rgb, 255:red, 208; green, 2; blue, 27 }  ,draw opacity=1 ][fill={rgb, 255:red, 208; green, 2; blue, 27 }  ,fill opacity=1 ] (199.97,50.92) -- (200.77,51.73) -- (197.8,54.7) -- (200.77,57.67) -- (199.97,58.47) -- (197,55.5) -- (194.03,58.47) -- (193.23,57.67) -- (196.2,54.7) -- (193.23,51.73) -- (194.03,50.92) -- (197,53.9) -- cycle ;
\draw [color={rgb, 255:red, 0; green, 0; blue, 0 }  ,draw opacity=1 ]   (190,40) -- (273,40) ;
\draw  [fill={rgb, 255:red, 0; green, 0; blue, 0 }  ,fill opacity=1 ] (265,39.5) .. controls (265,38.67) and (264.33,38) .. (263.5,38) .. controls (262.67,38) and (262,38.67) .. (262,39.5) .. controls (262,40.33) and (262.67,41) .. (263.5,41) .. controls (264.33,41) and (265,40.33) .. (265,39.5) -- cycle ;
\draw  [fill={rgb, 255:red, 0; green, 0; blue, 0 }  ,fill opacity=1 ] (204,40.5) .. controls (204,39.67) and (203.33,39) .. (202.5,39) .. controls (201.67,39) and (201,39.67) .. (201,40.5) .. controls (201,41.33) and (201.67,42) .. (202.5,42) .. controls (203.33,42) and (204,41.33) .. (204,40.5) -- cycle ;
\draw  [color={rgb, 255:red, 208; green, 2; blue, 27 }  ,draw opacity=1 ][fill={rgb, 255:red, 208; green, 2; blue, 27 }  ,fill opacity=1 ] (194.47,66.42) -- (195.27,67.23) -- (192.3,70.2) -- (195.27,73.17) -- (194.47,73.97) -- (191.5,71) -- (188.53,73.97) -- (187.73,73.17) -- (190.7,70.2) -- (187.73,67.23) -- (188.53,66.42) -- (191.5,69.4) -- cycle ;
\draw [color={rgb, 255:red, 0; green, 0; blue, 0 }  ,draw opacity=1 ]   (290,40) -- (373,40) ;
\draw  [fill={rgb, 255:red, 0; green, 0; blue, 0 }  ,fill opacity=1 ] (365,39.5) .. controls (365,38.67) and (364.33,38) .. (363.5,38) .. controls (362.67,38) and (362,38.67) .. (362,39.5) .. controls (362,40.33) and (362.67,41) .. (363.5,41) .. controls (364.33,41) and (365,40.33) .. (365,39.5) -- cycle ;
\draw  [color={rgb, 255:red, 208; green, 2; blue, 27 }  ,draw opacity=1 ][fill={rgb, 255:red, 208; green, 2; blue, 27 }  ,fill opacity=1 ] (305.27,39.91) -- (305.28,41.04) -- (301.07,41.06) -- (301.09,45.27) -- (299.96,45.28) -- (299.94,41.07) -- (295.73,41.09) -- (295.72,39.96) -- (299.93,39.94) -- (299.91,35.73) -- (301.04,35.72) -- (301.06,39.93) -- cycle ;
\draw  [color={rgb, 255:red, 208; green, 2; blue, 27 }  ,draw opacity=1 ][fill={rgb, 255:red, 208; green, 2; blue, 27 }  ,fill opacity=1 ] (303.42,36.67) -- (304.22,37.47) -- (301.29,40.49) -- (304.27,43.47) -- (303.48,44.28) -- (300.51,41.31) -- (297.58,44.33) -- (296.78,43.53) -- (299.71,40.51) -- (296.73,37.53) -- (297.52,36.72) -- (300.49,39.69) -- cycle ;
\draw  [fill={rgb, 255:red, 0; green, 0; blue, 0 }  ,fill opacity=1 ] (302,40.5) .. controls (302,39.67) and (301.33,39) .. (300.5,39) .. controls (299.67,39) and (299,39.67) .. (299,40.5) .. controls (299,41.33) and (299.67,42) .. (300.5,42) .. controls (301.33,42) and (302,41.33) .. (302,40.5) -- cycle ;

\draw  [color={rgb, 255:red, 126; green, 211; blue, 33 }  ,draw opacity=1 ][fill={rgb, 255:red, 126; green, 211; blue, 33 }  ,fill opacity=1 ] (259.43,68.57) -- (266,58) -- (260,70) -- (256,66.86) -- (259.43,68.57) -- cycle ;
\draw  [color={rgb, 255:red, 208; green, 2; blue, 27 }  ,draw opacity=1 ][fill={rgb, 255:red, 208; green, 2; blue, 27 }  ,fill opacity=1 ] (352.16,62.1) -- (356.17,64.59) -- (363.6,60.8) -- (357.32,65.38) -- (360.21,68.33) -- (356.62,65.85) -- (352.02,67.86) -- (355.63,65.19) -- cycle ;
\draw  [color={rgb, 255:red, 126; green, 211; blue, 33 }  ,draw opacity=1 ][fill={rgb, 255:red, 126; green, 211; blue, 33 }  ,fill opacity=1 ] (147.43,68.57) -- (154,58) -- (148,70) -- (144,66.86) -- (147.43,68.57) -- cycle ;
\draw  [color={rgb, 255:red, 208; green, 2; blue, 27 }  ,draw opacity=1 ][fill={rgb, 255:red, 208; green, 2; blue, 27 }  ,fill opacity=1 ] (89.47,73.73) -- (90.27,74.53) -- (87.3,77.5) -- (90.27,80.47) -- (89.47,81.27) -- (86.5,78.3) -- (83.53,81.27) -- (82.73,80.47) -- (85.7,77.5) -- (82.73,74.53) -- (83.53,73.73) -- (86.5,76.7) -- cycle ;
\draw  [fill={rgb, 255:red, 0; green, 0; blue, 0 }  ,fill opacity=1 ] (88,77.5) .. controls (88,76.67) and (87.33,76) .. (86.5,76) .. controls (85.67,76) and (85,76.67) .. (85,77.5) .. controls (85,78.33) and (85.67,79) .. (86.5,79) .. controls (87.33,79) and (88,78.33) .. (88,77.5) -- cycle ;

\end{tikzpicture}
}
\caption{Examples of allowed and disallowed positions for the pair of points in $\Hilb^n(\mathbb{P}^1|0)_{\le 1}$.}
\label{fig:pointsmoduli}
\end{figure} 

Consider the locus in $\Hilb^2(\PP^1|0)_{\leq 1}$ where both points are supported on the bubble $\PP^1$. We claim that this locus is a divisor in $\Hilb(\PP^1|0)_{\le 1}$ isomorphic to the weighted projective stack $\Pcal(1,2)$.
Indeed, the new locus parametrizes two points supported the smooth locus of the bubble and not both supported on the endpoint $\tilde 0$. This is the same as a point in $\Sym^2(\PP^1\setminus \infty)\setminus \{2\cdot 0\} = \A^2_{s_1+s_2, s_1s_2} \setminus 0.$ The action of the rubber torus on the bubble induces an action of $\Gm$ on $\A^2_{s_1+s_2, s_1 s_2} \setminus 0$ with weights $1$ and $2$ on the two coordinates. Our claim follows from taking the $\Gm$-quotient $[(\A^2_{s_1+s_2, s_1 s_2} \setminus 0)/\Gm]\cong \cP(1,2).$ We have effectively described performing a weighted blow-up centered at the intersection of the toric boundary divisors $V(a_1b_2+a_2b_1)$ and $V(b_1b_2)$ with weights $1$ and $2$ respectively. 

By blowing up a toric variety at a torus invariant locus, we obtain a toric stack, its cones and specializations are illustrated in Figure \ref{fig:P2B1}.

\input{P2B1}


\subsection{The isomorphism $\textbf{\text{Hilb}}^{\boldsymbol{2}}{(\boldsymbol{\PP^1}|\boldsymbol{0})}_{\leq \mathbf{1}}\cong\textbf{\text{Hilb}}^{\boldsymbol{2}}{(\boldsymbol{\PP^1}|\boldsymbol{0})}$}

Going from $\Hilb^2(\PP^1|0)_{\le 1}$ to $\Hilb^2(\PP^1|0)$ consists of replacing the locus where one point lies on the endpoint by placing the point on a further bubble (cf. Figure \ref{fig:P2B2}). Even though we are changing the moduli problem (i.e. the universal families), we have an induced isomorphism of moduli spaces since the newly introduced bubble with one point is stabilized by $\Gm$. 

\input{P2B2}

\subsection{Stanley--Reisner ring calculation}\label{subsec:SR}
Extending the above discussion, we describe the Chow ring of $\Hilb^n(\mathbb{P}^1|0)_{\leq i}$ using the following corollary of Theorem \ref{thm:main} and Corollary \ref{cor:P1}.  Let us consider the standard lattice and fan for $\PP^n \cong \Sym^n(\PP_{[a:b]}^1)$. Then the ray generated by $e_k$ corresponds to the zero locus of the the symmetric polynomial of degree $k$ in $t=a/b$.

\begin{corollary}[Toric stack description]
    The iterated star subdivision of the fan of $\PP^n$ along the rays $(1,2,\dots, n),$ $(1,2,\dots, n-1,0),\dots,(1,2,\dots,i,\dots, 0)$ gives the fan of $\Hilb^n(\mathbb{P}^1|0)_{\leq i}$ as a toric stack. Following \cite[Example 5.4.2]{quek-rydh-weighted-blow-up}, the stacky structure is therefore given by the map $\beta: \bZ^{\Sigma(1)} \to \bZ^n$ mapping each ray to its primitive ray generator.
\end{corollary}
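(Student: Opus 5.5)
The plan is to induct downward on $i$, starting from $\Hilb^n(\PP^1|0)_{\leq n} = \Hilb^n(\PP^1) = \Sym^n(\PP^1)\cong \PP^n$ with its standard fan. We take the coordinates so that the ray $e_k$ corresponds to the divisor $V(\sigma_k)$, where $\sigma_k$ is the $k$-th elementary symmetric function in $t = a/b$.

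For the inductive step, Theorem \ref{thm:D=p} (with $D=p=0$) identifies $\Hilb^n(\PP^1|0)_{\leq j-1}\to \Hilb^n(\PP^1|0)_{\leq j}$ as the weighted blow-up along $\cZ_j$ with weights $(1,\dots,j)$. The first task is to make this toric. The locus $\cZ_j$, where exactly $j$ points sit on the endpoint $\tilde 0$, is cut out by the vanishing of the lowest $j$ coefficients in $t$ of the relevant symmetric polynomial. In the chart away from the previously introduced exceptional divisors, these are $\sigma_n,\sigma_{n-1},\dots,\sigma_{n-j+1}$, which vanish to orders $1,\dots,j$ under the rubber scaling, exactly as in the $\Hilb^2$ computation where $s_1+s_2$ and $s_1s_2$ have weights $1,2$. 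After reindexing the coordinates (in the statement's convention $e_1$ corresponds to the degree-$1$ function), this center is the torus-invariant stratum $V(\sigma_1,\dots,\sigma_j)$, the function $\sigma_k$ carries weight $k$, and the proper transform of the stratum is again torus-invariant.

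By \cite[Example 5.4.2]{quek-rydh-weighted-blow-up}, a weighted blow-up of a toric stack along the torus-invariant stratum attached to a cone $\tau$, with weights $w_k$ on the ray generators $u_k$ of $\tau$, is the toric stack whose fan is the star subdivision of $\tau$ at the ray $\sum_k w_k u_k$. For the center $\cZ_j$ this ray is $(1,2,\dots,j,0,\dots,0)$. The stacky structure is the map $\beta$ that sends each ray to the listed vector, which is primitive because its first coordinate is $1$. Iterating for $j=n,n-1,\dots,i+1$ yields the stated sequence of subdivisions, and Corollary \ref{cor:P1} confirms that the result is a toric stack.

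The main obstacle is checking that, at each stage, the center $\cZ_j$ is precisely the closure of the torus orbit for the cone spanned by $e_1,\dots,e_j$ in the already subdivided fan, and not some other cone. I would address this by identifying $\cZ_j$ with its generic point in the dense torus chart of the stratum and checking, in local coordinates, that its ideal is generated by the proper transforms of the $\sigma_k$ with the correct weights. The containment of the previous new ray $(1,\dots,j+1,0,\dots)$ in the span of $e_1,\dots,e_{j+1}$ shows that the relevant cone survives the earlier subdivisions.
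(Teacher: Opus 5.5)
Your overall route is the paper's: Theorem~\ref{thm:D=p} gives the blow-ups along $\cZ_j$ with weights $(1,\dots,j)$, the centers are torus-invariant as in Corollary~\ref{cor:P1}, and Quek--Rydh turn a toric weighted blow-up into a star subdivision. However, the ``reindexing'' step is a genuine error.

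\textbf{Why the reindexing fails.} In your first paragraph you fixed the basis in which $\mathbf{e}_k$ is the ray of $V(\sigma_k)$. This is already the statement's convention, and in it you correctly found $\cZ_j=V(\sigma_{n-j+1},\dots,\sigma_n)$. You cannot then relabel this as $V(\sigma_1,\dots,\sigma_j)$. All the blow-ups subdivide one fan in one lattice, but the relabeling $\sigma_{n-j+k}\mapsto\sigma_k$ changes with $j$. For $n\geq 3$, no automorphism of the fan of $\PP^n$ does it for all $j$ at once. Geometrically, the ray of $V(\sigma_n)$ lies in every center, with weight $j$ in $\cZ_j$. Along the list $(1,\dots,j,0,\dots,0)$, by contrast, every ray keeps a fixed weight.

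\textbf{The literal list is false for $n\geq 3$.} Star subdividing the fan of $\PP^3$ at $(1,2,3)$ and then at $(1,2,0)$ creates the cone $\langle \mathbf{e}_1,(1,2,0),(1,2,3)\rangle$ of index $6$. That gives a point with stabilizer of order $6$. But a stabilizer in $\Hilb^3(\PP^1|0)_{\leq 1}$ is a product over bubbles of cyclic groups whose orders divide the number of points on that bubble, so its order is at most $3$.

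\textbf{What your computation actually gives.}
\[\rho_j=\sum_{k=1}^{j}k\,\mathbf{e}_{n-j+k}=(0,\dots,0,1,2,\dots,j),\qquad j=n,n-1,\dots,i+1.\]
This is the list in the introduction, and it is the vector the paper assigns to $\rho_j$ in the definition right after this corollary. For $n=3$ the second ray is $(0,1,2)$, and the cone indices are $1,1,1,1,2,2,2,3$. So the statement's $(1,2,\dots,n-1,0),\dots$ is a mis-indexing, and so is its last entry, which should correspond to $j=i+1$ as in your iteration range. You should have derived $\rho_j$ and flagged the discrepancy rather than reproduced it.

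\textbf{Two further corrections once the basis is kept fixed.}

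First, the weights pair the other way from how you listed them: $\sigma_{n-j+k}$ has weight $k$, so $\sigma_n$ has weight $j$. Take a general point of $\cZ_j$, let $s$ be the $j$ points near $0$, and let $B$ be the remaining points, all in $\mathbb{G}_m$. Then
\[\sigma_{n-j+k}=\sigma_{n-j}(B)\,\sigma_k(s)+\bigl(\text{terms }\sigma_a(s)\,\sigma_{n-j+k-a}(B),\ a>k\bigr),\]
with $\sigma_{n-j}(B)$ invertible. This triangularity identifies the weighted center itself, not just its support, as the toric one generated by $\sigma_{n-j+k}$ in degree $k$. That is exactly what Quek--Rydh's star-subdivision description requires. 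Checking near a general point suffices, because the graded pieces of a smooth weighted center are primary to the irreducible center $\cZ_j$.

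Second, the cone $\tau_j=\langle\mathbf{e}_{n-j+1},\dots,\mathbf{e}_n\rangle$ survives the earlier subdivisions for a different reason than the containment you cite. The star subdivision at $\rho_{j'}$ alters only cones containing $\tau_{j'}$, and $\rho_{j'}\notin\tau_j$ for $j'>j$.
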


Therefore, the Chow ring of $\Hilb^n(\mathbb{P}^1|0)_{\leq i}$ can be calculated by the Stanley-Reisner ring of the fan.

\begin{definition}
   We fix coordinates $[a:b]\in \mathbb{P}^1$ as before and set $t=b/a.$ 
   
   For $j=1,\dots,n,$ let $\sigma_{j}$ be the ray corresponding to the coordinate hyperplanes on $\Sym^n(\mathbb{P}^1)\cong \mathbb{P}^n$ associated to the elementary symmetric polynomials $\sum_{\substack{J\subset \{1,\dots,n\}\\ |J|=j}}\prod_{k\in J}t_{k},$ and let $\tau$ be the remaining coordinate hyperplane (associated to $1/\prod_{i=1}^n t_i).$ We assign primitive vectors $\mathbf{e}_j$ to $\sigma_j$ and $-\sum_{i=1}^n \mathbf{e}_i$ to $\tau.$

   For $j = i+1,\dots, n,$ we let $\rho_j$ be the ray with primitive vector $\sum_{k = n-j+1}^n (k+j-n)\mathbf{e}_{k},$ so that $\rho_j$ is the ray introduced at the weighted blow-up $\Hilb^n(\mathbb{P}^1|0)_{\leq j}\to \Hilb^n(\mathbb{P}^1|0)_{\leq (j+1)}.$
\end{definition}

\begin{lemma}
    The Chow ring of $\Hilb^n(\mathbb{P}^1|0)_{\leq i})$ is given by $$\mathrm{CH}^\star(\Hilb^n(\mathbb{P}^1|0)_{\leq i})) = \frac{\mathbb{Z}[\sigma_1,\dots,\sigma_n, \tau, \rho_{i+1},\dots, \rho_{n}]}{\left\{\begin{array}{c}\sigma_{j} + \sum_{k = n-j+1}^{n}(k+j-n)\rho_k-\tau: j = 1,\dots, n, \\  \sigma_j\rho_{n-j}: j = 1,\dots, n-i+1, \\ \tau\rho_n, \sigma_{n-i}\sigma_{n-i+1}\dots\sigma_{n}\end{array}\right\}}$$
    In this isomorphism, each $\sigma_j,\tau\in \mathrm{CH}^1(\Hilb^n(\mathbb{P}^1|0)_{\leq i})$ is the proper transform of the coorindate hyperplane on $\Sym^n(\mathbb{P}^1),$ and each $\rho_j\in \mathrm{CH}^1(\Hilb^n(\mathbb{P}^1|0)_{\leq i})$ is the exceptional divisor introduced by the intermediate weighted blow-up $\Hilb^n(\mathbb{P}^1|0)_{\leq j}.$ 
\end{lemma}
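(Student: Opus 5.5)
The plan is to compute the Chow ring as the integral Chow ring of a smooth toric Deligne--Mumford stack, using the toric stack description in the preceding Corollary. By Borisov--Chen--Smith, or the integral version for smooth toric stacks with simplicial fan and stacky structure given by $\beta$ (cf.\ \cite[Example 5.4.2]{quek-rydh-weighted-blow-up}), $\mathrm{CH}^\star$ is the Stanley--Reisner ring. Concretely, it is the polynomial ring on the ray divisors, modulo two kinds of relations. The \emph{linear relations} are $\sum_{\rho}\langle m,v_\rho\rangle D_\rho$ for $m$ in the dual lattice. The \emph{monomial relations} are $\prod_{\rho\in S}D_\rho$ for every set $S$ of rays not spanning a cone. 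Integrality holds because the stack is a global quotient $[U/G]$ with $G$ a torus; here $\beta$ is injective on the rays and the fan is complete and simplicial. So the proof reduces to listing the rays and determining the non-faces.

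For the linear relations, I take $m=\mathbf{e}_j^\vee$ for $j=1,\dots,n$. The pairing with $\sigma_k$ is $\delta_{jk}$ and with $\tau$ it is $-1$. The ray $\rho_k$ has vector $\sum_{l=n-k+1}^n (l+k-n)\mathbf{e}_l$, so its pairing with $\mathbf{e}_j^\vee$ is $j+k-n$ when $j\ge n-k+1$ and $0$ otherwise. This gives exactly $\sigma_j+\sum_{k=n-j+1}^{n}(k+j-n)\rho_k-\tau$, restricted to the $k\ge i+1$ that actually occur.

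For the monomial relations, I track how the non-faces change under each star subdivision. The fan of $\PP^n$ has a single minimal non-face, namely $\{\sigma_1,\dots,\sigma_n,\tau\}$. Star subdividing along the ray $\rho_k$ has two effects. First, the minimal non-face $\{\sigma_{n-k+1},\dots,\sigma_n\}$ appears; this is the support of $\rho_k$ in the current fan, i.e.\ the blow-up center $\cZ_k$. Second, $\rho_k$ together with the complement rays of its star becomes a non-face.

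I will carry out the subdivisions in order $k=n,n-1,\dots,i+1$ and verify inductively the following. The subdivision along $\rho_n$ kills $\{\sigma_1,\dots,\sigma_n\}$. It also gives $\tau\rho_n=0$, because $\rho_n$ lies in the interior of the cone $\langle\sigma_1,\dots,\sigma_n\rangle$, whose star does not contain $\tau$. At each later step, the minimal non-face introduced is $\{\sigma_{n-k+1},\dots,\sigma_n\}$ for $k=i+1$, which yields $\sigma_{n-i}\cdots\sigma_n$ after reindexing. The relations $\sigma_j\rho_{n-j}$ come from the fact that $\rho_{n-j}$ lies in the interior of the cone spanned by $\sigma_{j+1},\dots,\sigma_n$, so it is not adjacent to $\sigma_j$. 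I will then verify that all remaining products, such as $\rho_k\rho_{k'}$ and $\rho_k\sigma_j$ for other index pairs, are either faces or generated by these relations. To do this I will use that each $\rho_k$ lies in the relative interior of the cone spanned by $\sigma_{n-k+1},\dots,\sigma_n$ together with the earlier ray $\rho_{k+1}$.

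The main obstacle is this combinatorial bookkeeping of minimal non-faces after iterated star subdivisions. The issue is that later rays $\rho_k$ lie in cones already containing earlier $\rho$'s, so the primitive collections can a priori grow. I would first test the cases $n=2,3$ against Figures \ref{fig:P2B1} and \ref{fig:P2B2} to confirm the list. Then I would prove the general case by induction on the number of subdivisions, describing the maximal cones explicitly.
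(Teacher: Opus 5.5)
Your route is the paper's. You realize $\Hilb^n(\PP^1|0)_{\le i}$ as the toric stack of the iterated star subdivision and read off the integral Stanley--Reisner ring; the paper gives no more than this. Your linear relations are correct.

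The gap is in the step you defer, which is the whole content, and two of your reasons for it are wrong. Put $C_k=\{\sigma_{n-k+1},\dots,\sigma_n\}$.
\begin{itemize}
\item That $\rho_{n-j}$ lies in the relative interior of $\langle C_{n-j}\rangle$ does not make it non-adjacent to $\sigma_j$. The same reasoning would make it non-adjacent to $\tau$ and to $\sigma_{j-1}$, and it is adjacent to both.
\item $\rho_k$ does not lie in the relative interior of the cone on $C_k\cup\{\rho_{k+1}\}$: $\rho_{k+1}$ has $\mathbf{e}_{n-k}$-coefficient $1$, while $\rho_k$ has $0$. It lies in the relative interior of $\langle C_k\rangle$, which is still a cone because every cone removed so far contains $C_{k+1}$. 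In particular, primitive collections do not grow.
\end{itemize}
The missing tool is the link rule for star subdivision: $\{\rho_k\}\cup F$ is a face iff $C_k\not\subseteq F$ and $F\cup C_k$ is a face of the current fan.

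With it, induct on $m$. The claim is that after subdividing at $\rho_n,\dots,\rho_m$, the minimal non-faces are exactly $C_m$, $\{\rho_n,\tau\}$, and $\{\sigma_j,\rho_{n-j}\}$ for $1\le j\le n-m$. In the step to $m-1$:
\begin{itemize}
\item $C_m$ stops being minimal because it contains $C_{m-1}$.
\item The old two-element non-faces survive. They are disjoint from $C_{m-1}$ (as $j\le n-m$), so they create no new non-faces through $\rho_{m-1}$.
\item The only ray $x\notin C_{m-1}$ with $C_{m-1}\cup\{x\}$ a non-face is $\sigma_{n-m+1}$, since $C_{m-1}\cup\{\sigma_{n-m+1}\}=C_m$ was destroyed. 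This gives the single new non-face $\{\sigma_{n-m+1},\rho_{m-1}\}$.
\end{itemize}

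Three further points.
\begin{itemize}
\item Taking $m=i+1$ gives the relations $\sigma_j\rho_{n-j}$ only for $1\le j\le n-i-1$. The stated range $j\le n-i+1$ has two extra terms involving $\rho_i$ and $\rho_{i-1}$, which are not generators. You must say how that range is to be read.
\item The case $i=0$ needs separate treatment. Here $\rho_1=\mathbf{e}_n$ is the ray of $\sigma_n$, so the last blow-up is the weight-$1$ blow-up of a divisor, hence an isomorphism. The fan then has only $2n$ rays, and your claim that $\beta$ is injective on the listed generators fails. The stated presentation is still correct because $C_1=\{\sigma_n\}$ becomes a non-face (the proper transform of $\sigma_n$ is empty). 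So $\sigma_{n-i}\cdots\sigma_n$ reads $\sigma_n=0$, and the $j=n$ linear relation identifies $\rho_1$ with the former $\sigma_n$.
\item $G$ is a torus because the ray generators span $\mathbb{Z}^n$ (they include $\mathbf{e}_1,\dots,\mathbf{e}_n$), not because of injectivity on rays or completeness. Integrality then follows from excision on $U\subset\mathbb{A}^{\Sigma(1)}$. The complement of $U$ is a union of $G$-invariant coordinate subspaces, with equivariant classes $\prod_{\rho\in S}D_\rho$.
\end{itemize}
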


\begin{remark}
    In the above, the proper transforms of $\tau$  and $\rho_j$ in the weighted blow-ups agree with their pullback.
\end{remark}

{We simplify the isomorphism as}$$\mathrm{CH}^\star(\Hilb^n(\mathbb{P}^1|0)_{\leq i})) = \frac{\mathbb{Z}[\tau, \rho_{i+1},\dots, \rho_{n}]}{\left\{\begin{array}{c} \left(\tau-\sum_{k = n-j+1}^{n}(k+j-n)\rho_k\right)\rho_{n-j}: j = 1,\dots, n-i+1, \\ \tau\rho_n, \prod_{j = n-i}^n (\tau-\sum_{k = n-j+1}^{n}(k+j-n)\rho_k) \end{array}\right\}}$$

\section{Logarithmic Hilbert schemes via weighted blow-ups}\label{sec:wbu}

\subsection{Relevant results for weighted blow-ups}
In this subsection we will assume the basic definitions of weighted blow-ups following the work of Quek--Rydh \cite{quek-rydh-weighted-blow-up}. A shorter introduction to weighted blow-ups can be found in \cite{arena2024criterion}.

Intuitively, weighted blow-ups can be thought of as birational transformations replacing a chosen center $\cZ$ by a divisor $\cE$, where $\cE \to \cZ$ has the structure of a bundle of weighted projective stacks $\cP(a_1,\dots,a_d)$. 

Choosing the (strictly positive integer) weights $a_1,\dots,a_d$ can be a subtle task, as it requires to assign $\Gm$-weights to local trivializations of the normal bundle of $\cZ$ in $\cX$ that are compatible across the open cover. In this paper we will restrict our attention to weighted blow-ups with extra regularity properties, namely what Quek--Rydh defined as regular weighted blow-ups.

\begin{remark}
    As a weighted blow-up is obtained as a quotient of the deformation to the weighted normal cone via the action of $\Gm,$ the weighted normal cone is at the heart of understanding the geometry and intersection theory of weighted blow-ups. In our setting, the weighted blow-up centers are regularly embedded, and we will study their weighted normal bundles both in this section (to prove the weighted blow-up construction) and in Section \ref{subsec: topChernClass}, where we compute its top Chern class to apply the weighted analogue of Keel's formula. It is important to remark that the weighted normal bundle is a twisted weighted vector bundle (using the definition of \cite{quek-rydh-weighted-blow-up}) and it is not necessarily isomorphic to the normal bundle.
\end{remark}

The main goal of this section is to prove the maps of Theorem \ref{thm:main} are, indeed, weighted blow-ups. Constructing weighted blow-ups from the base is a rather complicated endeavor. However, as long as the objects in questions are smooth Deligne-Mumford stacks, checking whether or not a given map is a weighted blow-up is a significantly easier matter, thanks to the following weighted blow-down criterion.

\begin{theorem}[\cite{arena2024criterion}, Theorem 1.1]\label{thm:blowdown} Let $\cY$ and $\cZ$ be smooth separated Deligne-Mumford stacks over a field of characteristic 0, and let $g: \cE \ra \cZ$ be a fibration of positive dimensional weighted projective stacks $\cP(a_1,\dots,a_n)$. Assume that there is a closed embedding $\cE \hookrightarrow \cY$ with $\cE$ a Cartier divisor in $\cY$, such that on every fiber $z \in \cZ,$ we have $\cO_{\cY}(\cE)|_{\cE_z} \cong \cO_{\cE_z}(-1)$.
Then there is a smooth, separated and tame Deligne-Mumford stack $\cX$, with two maps $\iota: \cZ \ra \cX$ and $f: \cY \ra \cX$ such that $\iota$ is a closed embedding and $f$ is a weighted blow-up with reduced center $\cZ$. Moreover, the resulting square 
$$\begin{tikzcd}
  \cE \arrow[hook]{r} \arrow{d}{g} & \cY \arrow{d}{f} \\
  \cZ \arrow[hook]{r}{\iota} & \cX
\end{tikzcd} $$ is a pushout in algebraic stacks.
\end{theorem}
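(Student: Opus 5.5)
The plan is to build $\cX$ as a pushout of the diagram $\cZ \xleftarrow{g} \cE \hookrightarrow \cY$, show it is smooth by a local model, and then recognize $f$ as a weighted blow-up. The step I expect to be hardest is the comparison of formal neighborhoods in the second paragraph.

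First, I would show that the pushout exists as an algebraic stack. I would invoke an existence theorem for pushouts along closed immersions and affine morphisms, in the style of Rydh or Artin contractions. For this I need $g$ to be proper and cohomologically flat, with $R g_*\cO_\cE = \cO_\cZ$. That identity follows from the standard cohomology vanishing on the weighted projective stacks $\cP(a_1,\dots,a_n)$: for $m\ge 0$ one has $H^{>0}(\cO(m))=0$ and $H^0(\cO)=k$. The pushout $\cX$ is then constructed étale locally on $\cZ$. A priori it is only an algebraic space or stack with possibly bad singularities, and the content of the theorem is that it is smooth and that $f$ is a weighted blow-up.

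Second, I would identify the formal neighborhood of $\cE$ in $\cY$, étale locally on $\cZ$. The model is the exceptional divisor of the weighted blow-up of $\cZ\times\A^n$ along $\cZ\times 0$ with weights $(a_1,\dots,a_n)$. In that model the exceptional divisor is $\cE$, with normal bundle $\cO_\cE(-1)$, which is exactly the hypothesis $\cO_\cY(\cE)|_{\cE_z}\cong\cO(-1)$.

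I would compare the two thickenings order by order. The obstructions to extending an isomorphism from the $k$-th to the $(k+1)$-st infinitesimal neighborhood, and the ambiguities in doing so, lie in groups of the form $H^1(\cE_z, T\otimes\cO(k))$ and $H^1(\cE_z,\cO(k))$, together with $H^2$ terms. I need all of these to vanish, and $H^0$ to be controlled.

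Since $\cE\to\cZ$ is only a fibration, the normal bundle data is a twisted weighted bundle rather than a vector bundle. The cleanest route may be to show instead that the graded algebra $\bigoplus_{m\ge 0} g_*\cO_\cY(-m\cE)$, or its completion along $\cE$, is locally isomorphic to the weighted Rees algebra of $\cZ\times 0\subset\cZ\times\A^n$. I would try this first, using the vanishing of $R^1g_*\cO_\cE(m)$ for $m\ge0$ to lift sections of $\cO_{\cE_z}(a_i)$ to functions $x_i$ on $\cY$ that vanish to order $a_i$ along $\cE$. This needs care when the $a_i$ are not coprime, or when several $a_i$ coincide, because the lifts must be chosen compatibly.

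Third, with the functions $x_i$ in hand, they descend to functions on $\cX$ by the pushout property and cut out $\cZ$ there. A Rees algebra comparison then shows that $\widehat{\cO}_{\cX,\cZ}$ is formally smooth over $\cZ$, so $\cX$ is smooth along $\iota(\cZ)$. Away from $\iota(\cZ)$, $\cX$ is isomorphic to $\cY\setminus\cE$, so $\cX$ is smooth everywhere.

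Separatedness and tameness of $\cX$ come from properness of $g$ and from characteristic $0$. Finally, $f^{-1}(\cZ)=\cE$ is a Cartier divisor, and the ideals $f^{-1}(\ldots)$ form a weighted filtration. By the universal property of weighted blow-ups in Quek--Rydh, $f$ factors through $\mathrm{Bl}_{\cZ,(a_i)}\cX\to\cX$. That factorization is an isomorphism by the local computation, which proves that $f$ is the weighted blow-up with reduced center $\cZ$.
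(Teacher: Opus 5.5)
The paper does not prove this statement itself. It quotes Theorem 1.1 of \cite{arena2024criterion}, and its only argument is the Remark that follows, which converts the fibrewise hypotheses into the original global ones. Setting $\cL:=\cO_\cY(-\cE)|_\cE$, cohomology and base change makes $\cN:=\Spec_{\cO_\cZ}\bigoplus_{k\ge 0}g_*\cL^{\otimes k}$ a twisted weighted vector bundle. The Quek--Rydh relative Proj together with Zariski's main theorem then gives $\cE\cong\cP(\cN)$, with $\cL$ the tautological $\cO(1)$.

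You instead try to reprove the criterion from scratch, and your first step fails.
\begin{itemize}
\item The pinching results you have in mind (Ferrand, Rydh) need the leg $g$ to be finite or at least affine. Here $g$ is proper with positive-dimensional fibres.
\item Artin's contraction theorem needs a formal contraction as input, which you have not yet built.
\item Properness, flatness and $Rg_*\cO_\cE=\cO_\cZ$ do not produce any contraction. A line $L\subset\PP^2$ mapping to $\Spec k$ satisfies all three. Yet since $L^2=1>0$, no morphism from $\PP^2$ to an algebraic space or DM stack contracts $L$ to a point while being an isomorphism elsewhere.
\end{itemize}
So the hypothesis $\cO_\cY(\cE)|_{\cE_z}\cong\cO(-1)$ is what makes $\cX$ exist at all, not merely what makes it smooth. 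Hence $\cX$ is not available before your second step. Nor can a pushout be built ``\'etale locally on $\cZ$'' at that stage: before $f$ exists, only formal neighbourhoods of $g^{-1}(U)$ in $\cY$ lie over $U$.

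The repair is to reverse the order.
\begin{enumerate}
\item Run your section-lifting argument on $\cE$ itself. This is exactly the Remark. It is also what justifies your unstated assumption that $\cE\to\cZ$ is \'etale locally $U\times\cP(a_1,\dots,a_n)$ with $\cO_\cY(-\cE)|_\cE\cong\cO(1)$.
\item Extend $g$ over the infinitesimal neighbourhoods of $\cE$. The obstructions lie in $H^1(g^{-1}U, g^*T_\cZ\otimes\cO_\cE(m))$ for $m\ge 1$, which vanishes for affine \'etale $U\to\cZ$.
\item Lift the coordinates. This gives the formal contraction $\widehat\cX=(\cZ,\varprojlim_m g_*(\cO_\cY/\cO_\cY(-m\cE)))$, which is \'etale locally $\mathrm{Spf}\,\cO_U[[x_1,\dots,x_n]]$.
\item Only now apply an algebraization theorem for formal modifications (Artin's, or a version of it for tame DM stacks) to obtain $\cX$ and $f$.
\end{enumerate}
After that, your third step and the pushout property can be checked.

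Your last step is also incorrect as stated for non-trivial weights. In general $f^{-1}(\cZ)$ is not $\cE$ scheme-theoretically, and $I_m\cO_\cY\neq\cO_\cY(-m\cE)$. For weights $(1,2)$, the pullback of $I_\cZ$ is not invertible at the $\mu_2$-point of $\cE$, because $\cO_{\cP(1,2)}(1)$ is not globally generated. So the factorization through the weighted blow-up must come from the universal property of the stacky Proj. Concretely, you need a graded map $\bigoplus_m I_m\cO_\cY\to\bigoplus_m\cO_\cY(-m\cE)$ that is surjective in some positive degree at every point, not an equality of filtrations.
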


In this paper we will leverage the weighted blow-up description to compute the Chow rings of the intermediate moduli spaces $\Hilb(C|D)_{\le \bi}$ by iteratively applying the weighted Keel's formula below.

\begin{remark}
    The original hypothesis of the theorem requires $\cE \to \cZ$ to be a weighted projective stack bundle such that $\cO_\cY(\cE)|_{\cE} \cong \cO_\cE(-1) \otimes g^*\cL$ with $\cL$ a line bundle over $\cZ$. We argue below why the two hypothesis are equivalent. 
    
    Let $\cL:=\cO_\cY(-\cE)|_{\cE},$ the construction $\Spec_{\cO_\cZ}(\bigoplus_{k\geq 0} \pi_*\cL^{\otimes k}):=\cN$ defines a twisted weighted vector bundle $\cN \to \cZ$ with the $\mathbb{G}_m$-action coming from the grading: this follows from a fiber-wise calculation of $R^j(\pi|_{\pi^{-1}(z)})_* ({\cL}|_{\pi^{-1}(z)})^{\otimes k})$ together with cohomology and base change theorem for Deligne--Mumford stacks \cite{Hall}.

    We are left to prove that $$\cE \cong [\Spec_{\cO_\cZ}(\bigoplus_{k\ge0} g_*\cL^{\otimes k}) \smallsetminus V(\bigoplus_{k>0} g_*\cL^{\otimes k})]/\Gm]=\cP(\cN).$$ The map $\pi: \cE \ra \cZ$ and $\cL$ satisfy the hypothesis of \cite[Section 1.7.1]{quek-rydh-weighted-blow-up}. Indeed for each $\varepsilon \in \cE$ there exists a positive integer $N$ such that $\pi^*\pi_*\cL^{\otimes N} \to \cL^{\otimes N}$ is surjective at $\varepsilon$, since it is true over the fibers $\cE_z=\pi^{-1}(z)$.
This implies that there is a map $\cE \to \cP(\cN)$ and the map is an isomorphism by Zariski's main theorem.

\end{remark}

\begin{theorem}\cite[Corollary 6.5]{Arena2025} \label{keel}
    Assume $i^*:\CHs(\cX)\to \CH^\star(\cZ)$ is surjective, then the integral Chow ring of the weighted blow-up $\cY\to \cX$ along $\cZ$ is given by
$$\CH^\star(\cY)\cong \frac{\CH^\star(\cX)[t]}{(t\cdot \ker(i^*), Q(t))},$$
where $Q(t)\in \CHs(\cX)[t]$ is any polynomial that pulls back to $c_{\mathrm{top}}^{\G_m}(\mathcal{N}_\cZ \cX)(t)\in \CHs(\cZ)[t]$ and such that $Q(0)=[\cZ] \in \CH^\star(\cX)$.
\end{theorem}

\subsection{Intermediate blow-ups}\label{sec:inducstep}

Let us start by constructing the sequence in Theorem \ref{thm:main}.

\begin{definition}\label{defn:EpsilonI}
    Fix $1\leq r\leq \ell$, let $\bi=(i_1,\dots,i_r,\dots, i_\ell)$, define $\bi-1_r:=\bi-\mathbf{e}_{r}=(i_1, \dots i_r-1, \dots, i_\ell).$ Let $\cE_{\bi}^{(r)} \subset \Hilb^n(C|D)_{\le \bi-1_j}$ be the closed substack parameterizing $(\tilde{C}, Z)$ where exactly $i_r$ points are supported on the bubble of $\tilde C$ containing $\tilde p_r$. Let $\cZ^{(r)}_{\bi} \subset \Hilb^n(C|D)_{\le \bi}$ be the closed substack where exactly $i_r$ points are supported on $\tilde{p}_r.$
\end{definition}

\begin{figure}[htp]
\resizebox{.2\textwidth}{!}{

\begin{tikzpicture}[x=0.75pt,y=0.75pt,yscale=-1,xscale=1]

\draw [color={rgb, 255:red, 74; green, 144; blue, 226 }  ,draw opacity=1 ]   (310,115) -- (288,174) ;
\draw  [color={rgb, 255:red, 208; green, 2; blue, 27 }  ,draw opacity=1 ][fill={rgb, 255:red, 208; green, 2; blue, 27 }  ,fill opacity=1 ] (301.97,139.92) -- (302.77,140.73) -- (299.8,143.7) -- (302.77,146.67) -- (301.97,147.47) -- (299,144.5) -- (296.03,147.47) -- (295.23,146.67) -- (298.2,143.7) -- (295.23,140.73) -- (296.03,139.92) -- (299,142.9) -- cycle ;
\draw [color={rgb, 255:red, 0; green, 0; blue, 0 }  ,draw opacity=1 ]   (292,129) -- (375,129) ;
\draw  [fill={rgb, 255:red, 0; green, 0; blue, 0 }  ,fill opacity=1 ] (367,129.5) .. controls (367,128.67) and (366.33,128) .. (365.5,128) .. controls (364.67,128) and (364,128.67) .. (364,129.5) .. controls (364,130.33) and (364.67,131) .. (365.5,131) .. controls (366.33,131) and (367,130.33) .. (367,129.5) -- cycle ;
\draw  [fill={rgb, 255:red, 0; green, 0; blue, 0 }  ,fill opacity=1 ] (306,129.5) .. controls (306,128.67) and (305.33,128) .. (304.5,128) .. controls (303.67,128) and (303,128.67) .. (303,129.5) .. controls (303,130.33) and (303.67,131) .. (304.5,131) .. controls (305.33,131) and (306,130.33) .. (306,129.5) -- cycle ;
\draw  [color={rgb, 255:red, 208; green, 2; blue, 27 }  ,draw opacity=1 ][fill={rgb, 255:red, 208; green, 2; blue, 27 }  ,fill opacity=1 ] (292.47,164.73) -- (293.27,165.53) -- (290.3,168.5) -- (293.27,171.47) -- (292.47,172.27) -- (289.5,169.3) -- (286.53,172.27) -- (285.73,171.47) -- (288.7,168.5) -- (285.73,165.53) -- (286.53,164.73) -- (289.5,167.7) -- cycle ;
\draw  [fill={rgb, 255:red, 0; green, 0; blue, 0 }  ,fill opacity=1 ] (291,168.5) .. controls (291,167.67) and (290.33,167) .. (289.5,167) .. controls (288.67,167) and (288,167.67) .. (288,168.5) .. controls (288,169.33) and (288.67,170) .. (289.5,170) .. controls (290.33,170) and (291,169.33) .. (291,168.5) -- cycle ;

\draw  [color={rgb, 255:red, 208; green, 2; blue, 27 }  ,draw opacity=1 ][fill={rgb, 255:red, 208; green, 2; blue, 27 }  ,fill opacity=1 ] (284.74,169.14) -- (284.72,168.01) -- (288.92,167.94) -- (288.86,163.74) -- (289.99,163.72) -- (290.06,167.92) -- (294.26,167.86) -- (294.28,168.99) -- (290.08,169.06) -- (290.14,173.26) -- (289.01,173.28) -- (288.94,169.08) -- cycle ;
\draw  [fill={rgb, 255:red, 0; green, 0; blue, 0 }  ,fill opacity=1 ] (288.42,167.46) .. controls (287.85,168.05) and (287.86,169) .. (288.46,169.58) .. controls (289.05,170.15) and (290,170.14) .. (290.58,169.54) .. controls (291.15,168.95) and (291.14,168) .. (290.54,167.42) .. controls (289.95,166.85) and (289,166.86) .. (288.42,167.46) -- cycle ;

\end{tikzpicture}}
\caption{A point in $\mathcal{Z}_{2}^{(0)}\cap \mathcal{E}_{3}^{(0)}\subset \Hilb^{3}(\PP^1|0+\infty)_{\leq (2,2)}.$ }
\label{fig:E3Z2}
\end{figure} 

\begin{remark}Notice that $\cE_{\bi}^{(r)}$ defines a codimension $1$ locus in $\Hilb^n(C|D)_{\le {{\bi}-1_r}}$, and it is dependent on the index $r$ via the difference $\bi- (\bi-1_r) = 1_r.$ Similarly, $\cZ^{(r)}_{\bi}$ has codimension $i_r.$
\end{remark}

\begin{remark}
    Let $\bi$ be any vector that has $r$-th entry equal to $i_r,$ and consider the morphism $$f_{i_r}:\Hilb^n(C|D)_{\leq \bi}\to \Hilb^n(C|p_r)_{\leq i_r}$$ that contracts bubbles from all markings but $p_r.$ Then the divisor $\calE_{\bi}^{(r)}$ on $\Hilb^n(C|D)_{\leq \bi}$ is pulled back from $\calE_{i_r}$ on $\Hilb^n(C|p_r)_{\leq i_r}.$ Therefore, we are justified in simplifying the notation $\calE_{\bi}^{(r)}$ to $\calE_{i_r}^{(r)}$ as a divisor on $\Hilb^n(C|D)_{\leq \bi}.$

    Let $\bi\leq \bi'$ with $r$-th entries $i_r, i_r'$ respectively such that $i_r< i_r'.$ Then the pullback of the divisor $\cE_{i_r'}^{(r)}$ along  $\Hilb^n(C|D)_{\leq \bi}\to \Hilb^n(C|D)_{\leq \bi'}$ is transverse to $\cE_{i_r}^{(r)}.$ Similarly, for any $r_1\neq r_2,$ and any $j_1,j_2$ the pullback of the divisor $\calE_{j_1}^{(r)}$ is transverse to the pullback of $\calE_{j_2}^{(r')}$ on $\Hilb^n(C|D)_{\leq \bi},$ whenever both are defined.
\end{remark}

\begin{customthm}{B}\label{thm:D=p}
  The natural map $$ \mathrm{Hilb}^n(C|D)_{\leq \bi-1_r}\to \mathrm{Hilb}^n(C|D)_{\leq \bi}$$ is a weighted blow-down with exceptional divisor $\cE_{i_r}^{(r)}$, center $\cZ^{(r)}_{i_r}$ and weights $(1,\dots,i_r).$
\end{customthm}

We prove Theorem \ref{thm:D=p} by verifying the criterion for weighted blow-downs in Theorem \ref{thm:blowdown}. Lemmas \ref{lem:TWPB}, \ref{lem:Cart}, and \ref{lem:O(-1)} will provide all the necessary ingredients.

{To simplify the notation, in Lemmas \ref{lem:TWPB}, \ref{lem:Cart}, and \ref{lem:O(-1)} we will use $\cZ,\ \cE,\ \cX,\ \cY$ to denote the spaces $\cZ^{(r)}_{\bi}, \ \cE^{(r)}_{\bi}, \ \Hilb^n(C|D)_{\bi},\ \Hilb^n(C|D)_{\le \bi-1_r}$ respectively.} {We will also suppress the dependency on the index of the marking $r$ and use $i$ to denote $i_r$ whenever there is no risk of ambiguity.}

\begin{lemma}\label{lem:TWPB}
    With the notation from Definition \ref{defn:EpsilonI}, for each geometric point $z\in \cZri,$ the fiber of $\piri:\cEri \to \cZri$ is a weighted projective stack $\cP(1,\dots, i).$
\end{lemma}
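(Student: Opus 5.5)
The plan is to describe the map $\piri:\cEri\to\cZri$ explicitly on geometric points and then compute a single fiber as a quotient of an affine space by the rubber torus. The map $\piri$ is the restriction of the reduction map $\cY=\Hilb^n(C|D)_{\le \bi-1_r}\to\cX=\Hilb^n(C|D)_{\le\bi}$. A point of $\cEri$ is a pair $(\tilde C,Z)$ in which the last bubble $B\cong\PP^1$ along $p_r$ (the component containing $\tilde p_r$) carries exactly $i$ points of $Z$. Its image contracts $B$ onto the node $\bar p$ joining it to the rest of $\tilde C$; this node becomes the new endpoint, and the $i$ points collapse onto it.

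First I will check that the image really lies in $\cZri$. Exactly $i$ points now sit on the endpoint, which is allowed by condition (3) for $\bi$. Condition (4) for the new last component requires length at least $i+1$ on it. This comes from finiteness of stabilizers in $\cY$ when that component is a bubble, since it must then carry a point off its endpoint, or from the interior when it is the core curve $C$; a small case check settles this.

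Next I will describe the fiber over a geometric point $z=(\tilde C',Z')\in\cZri$. Write $Z'=Z'_0+i\,\tilde p_r'$, where $Z'_0$ is the residual subscheme of length $n-i$ supported away from $\tilde p_r'$. The preimages of $z$ are obtained by attaching one new bubble $B$ at $\tilde p_r'$ and replacing $i\,\tilde p_r'$ by a length-$i$ subscheme $W\subset B$, keeping $Z'_0$ unchanged. This $W$ must satisfy three conditions:
\begin{enumerate}
\item By logarithmic flatness (condition (2)), $W$ avoids the node $\infty\in B$. Hence $W\in\Sym^i(B\setminus\infty)=\Sym^i(\A^1)\cong\A^i$, with coordinates the elementary symmetric functions $s_1,\dots,s_i$ in the coordinate $t$ of $B\setminus\infty$.
\item By condition (3) for $\bi-1_r$, $W$ has multiplicity at most $i-1$ at $\tilde p_r=0$. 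This removes exactly the point $W=i\cdot 0$, which is the origin $s_1=\dots=s_i=0$.
\item Condition (4), requiring length at least $i$ on $B$, is then automatic.
\end{enumerate}
The rubber torus $\Gm$ of $B$ acts by $t\mapsto\lambda t$, so it acts on $s_k$ with weight $k$. Isomorphisms of such objects over $z$ are exactly this rubber action. The fiber is therefore $[(\A^i\setminus 0)/\Gm]=\cP(1,\dots,i)$, generalizing the computation of the divisor $\cP(1,2)$ for $\Hilb^2(\PP^1|0)$ in Section \ref{sec:2P10}.

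The main obstacle is to make this identification stack-theoretic: the fiber must be computed as the $2$-fiber product $\operatorname{Spec}k\times_{\cZri}\cEri$, not just on geometric points. Two issues need care. First, the automorphism group of $z$ is a finite subgroup of the other rubber tori. Second, one must check that this group does not interact with the new factor. To handle both, I would work in the smooth chart $[\A^m/\Gm^m]$ of $\Exp(C|D)$ from Section \ref{subsec:stackexp}. In this chart, adding the last bubble along $p_r$ corresponds to one extra factor $[\A^1/\Gm]$, and the relative Hilbert scheme near $\cEri$ is a product of the part over $\tilde C'$ with the part recording $W$ on $B$. Using the description in Proposition \ref{prop:quotient} of the Hilbert stack as $\Sym^n$ of the smooth locus of the universal curve, the fiber over $z$ should then split off as the quotient of the $W$-data by the new $\Gm$ factor alone. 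This yields the statement of Lemma \ref{lem:TWPB}.
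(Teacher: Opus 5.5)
Your proposal is correct and follows essentially the same route as the paper. The paper's proof also identifies the fiber over $z$ with the data of $i$ unordered points on a new bubble attached at the endpoint, giving $[(\Sym^i(\A^1)\setminus 0)/\Gm]$ with weights $(1,\dots,i)$; it argues only at the level of geometric points and does not address the $2$-fiber-product issues you raise (one small aside: in your check that the image lies in $\cZri$, condition (4) must be read as constraining only bubbles, since otherwise $\bi=\mathbf{n}$ could not recover $\Hilb^n(C)$, so no appeal to points on the interior of the core curve $C$ is needed, and in general none is available).
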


\begin{proof}
    Let $z\in \cZri,$ let $C_z$ be the expanded curve over $z,$ and let $\tilde{p}_i\in C_z$ be the endpoint along $p_i.$ The fiber $\piri^{-1}(z)$ is the data of attaching $i$ unordered points on the $\PP^1$-bubble attached to $\tilde{p}_i.$ Therefore, we have the isomorphism $$\piri^{-1}(z)\cong [(\mathrm{Sym}^i(\mathbb{A}^1)\setminus 0)/\mathbb{G}_m],$$ where $\mathbb{G}_m$ acts on $\mathrm{Sym}^i(\mathbb{A}^1)$ with weights $(1,\dots, i)$ specified by monomial functions on $\mathbb{A}^1.$ Thus, $$\piri^{-1}(z)\cong \mathcal{P}(1,\dots, i).$$
    \end{proof}

\begin{lemma}\label{lem:Cart}
$\cEri$ is a smooth Cartier divisor of $\Hilb^n(C|p)_{\leq \bi -1_r}.$
\end{lemma}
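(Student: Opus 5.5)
The plan is to prove that $\cE$ is smooth and Cartier by working locally over the expansion stack. The map $\cY=\Hilb^n(C|D)_{\le \bi-1_r}\to \Exp(C|D)$ is smooth; this was shown in the proof of Proposition \ref{cor:Hilbsm}, where the base change to a chart is an open substack of $\mathrm{Sym}^n\mathscr{C}^{\mathrm{sm}}_S$. So $\cY$ is a smooth Deligne--Mumford stack, and on a smooth stack every reduced divisor is Cartier. What remains is to show that $\cE$ is a \emph{smooth} codimension-one closed substack. I expect to exhibit it, étale or smooth locally, as the pullback of a boundary divisor of $\Exp(C|D)$, namely $V(x_k)$ in a chart $\mathbb{A}^m\to\Exp(C|D)$, together with an extra open condition.

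\textbf{Step 1: local description.} Take a point $y=(\tilde C,Z)\in\cE$ and let $m$ be the number of bubbles in the chain over $p_r$. The chart $\mathbb{A}^m\to\Exp(C|p_r)$ has coordinates $x_1,\dots,x_m$, and $x_m$ is the smoothing parameter of the node joining the last bubble (the one containing $\tilde p_r$) to the rest of the chain. Near $y$, the stability condition (4) for $\cY$ forces at least $i_r$ points on the last bubble. Log flatness, condition (2), keeps every point away from the nodes.

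I claim that near $y$ the locus $\cE$ coincides with the pullback of $V(x_m)$. On $V(x_m)$ the last bubble persists. The $i_r$ points on it stay there, because they avoid the node and the support of $Z$ is locally constant on the components of the fiber of the smooth family $\mathscr{C}^{\mathrm{sm}}_S$. Conversely, off $V(x_m)$ the last node is smoothed, so that bubble merges with the previous component. I have to check two things here: that the condition ``exactly $i_r$ points on the last bubble'' is open and closed inside the locus where the bubble exists, and that a general deformation off $x_m=0$ leaves $\cE$. Both follow from the universal curve being the iterated blow-up $\mathbb{C}_m\to\mathbb{A}^m$.

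\textbf{Step 2: pull back and conclude.} The composite $\cY\to\Exp(C|D)\to$ (the $m$-th factor $\mathbb{A}^1/\mathbb{G}_m$) is smooth near $y$. Hence the preimage of the divisor $B\mathbb{G}_m$ is a smooth Cartier divisor: locally it is cut out by the single function $x_m$, whose differential is nonzero. This preimage is $\cE$, and $\cE$ is reduced since it is a smooth pullback of a reduced divisor. It has codimension one, which matches the remark following Definition \ref{defn:EpsilonI}.

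\textbf{Main obstacle.} The delicate point is in Step 1: identifying $\cE$ scheme-theoretically, not just set-theoretically, with the pullback of the boundary divisor. In particular I need the index $k=m$ to be well defined uniformly under the colimit gluings of $\Exp$. This works because the last bubble is canonically the one containing the section $\tilde p_r$. I would also verify that in a neighborhood of $y$ no other boundary divisor $V(x_k)$ with $k<m$ enters the description of $\cE$. The reason is that points on inner bubbles remain on the inner part regardless of the value of $x_m$.
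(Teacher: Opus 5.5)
Your proposal is correct and is essentially a fleshed-out version of the paper's one-line proof. Both rest on the smoothness of $\cY\to\Exp(C|D)$ coming from the relative-symmetric-product description in Proposition \ref{cor:Hilbsm}. The paper's phrase ``$\cE\to\Exp(C|D)$ is smooth'' can only mean smoothness over the boundary divisor where the last node persists, since $\cE$ lands in a proper closed locus of $\Exp(C|D)$. Your identification of $\cE$ with the pullback of $V(x_m)$ is exactly what makes this precise, and it also gives Cartier-ness directly.

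One small tightening in Step 1: you need \emph{every} point near $y$ with $x_m\neq 0$ to lie outside $\cE$, not just a general one. This uses condition (1) of the definition (finite automorphisms): the penultimate bubble carries at least one point of $Z$. So once the last node is smoothed, the new last component is either the core or has length $>i_r$.
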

\begin{proof}
    The map $\cEri\to \Exp(C|D)$ is smooth for the same argument as in Corollary 3.11, in which we show that $\Hilb^n(C|D)_{\leq \bi}$ smooth.
\end{proof}

\begin{lemma} \label{lem:O(-1)}
    For all geometric point $z\in \cZri,$ we have $$\cO_{\Himor}(\cEri)|_{\piri^{-1}(z)} \cong \cO_{\piri^{-1}(z)}(-1).$$ 
\end{lemma}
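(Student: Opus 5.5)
Fix a geometric point $z\in\cZ$ and write $F:=\pi^{-1}(z)\cong[(\mathrm{Sym}^i(\mathbb{A}^1)\setminus 0)/\mathbb{G}_m]\cong\cP(1,\dots,i)$ as in Lemma \ref{lem:TWPB}. Since $\Pic(\cP(1,\dots,i))\cong\mathbb{Z}$ is generated by $\cO(1)$, it suffices to determine one integer. We identify $\cO_{\cY}(\cE)|_F$ with the normal bundle $N_{\cE/\cY}|_F$, using that $\cE$ is a smooth Cartier divisor (Lemma \ref{lem:Cart}). We then compute this restriction as a $\mathbb{G}_m$-equivariant line bundle on $\mathrm{Sym}^i(\mathbb{A}^1)\setminus 0$. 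The $\mathbb{G}_m$ here is the rubber torus of the last bubble $B$, namely the bubble carrying the $i$ points and containing $\tilde p_r$.

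The main step is to describe the normal direction to $\cE$ deformation-theoretically. Locally near a point of $F$, the map $\cY\to\Exp(C|D)$ is smooth (Proposition \ref{cor:Hilbsm}), and $\cE$ is the preimage of the boundary divisor of $\Exp(C|D)$ corresponding to the node $q$ joining $B$ to the rest of the expansion. We fix the remaining data of $z$, namely the core expansion and the other $n-i$ points. The normal bundle of $\cE$ is then the pullback of the normal bundle of that boundary divisor, which is the smoothing parameter of $q$. That parameter is the line $T_q B\otimes T_q C'$, where $C'$ is the component on the other side of $q$. The factor $T_qC'$ is constant along $F$. The factor $T_qB$ is the tangent line at $\infty=\mathbb{P}(0\oplus\mathbf{1})$ of the bubble $\mathbb{P}(T\oplus\mathbf{1})$.

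We then compute the $\mathbb{G}_m$-weight. The rubber torus acts on $B$ by scaling the coordinate $s$ on $\mathbb{A}^1=B\setminus\infty$ with weight $1$. This is the normalization that gives the symmetric functions of $s$ weights $(1,\dots,i)$ in Lemma \ref{lem:TWPB}. The local coordinate at $\infty$ is $1/s$, so $T_\infty B$ carries character $-1$.

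Over the quotient, the trivial line bundle with character $\chi$ descends to $\cO(\chi)$ on $\cP(1,\dots,i)$, with the sign convention fixed so that the coordinate functions $\sigma_k$ of weight $k$ are sections of $\cO(k)$. With this convention $T_qB$ descends to $\cO(-1)$. Tensoring with the constant line $T_qC'$ gives $N_{\cE/\cY}|_F\cong\cO_F(-1)$.

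As a consistency check, the same computation in the example $\Hilb^2(\PP^1|0)_{\le1}$ recovers the exceptional divisor $\cP(1,2)$ with normal bundle $\cO(-1)$, as expected for a weighted blow-up.

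The main obstacle is making the identification $N_{\cE/\cY}\cong T_qB\otimes T_qC'$ rigorous in families over $F$ and tracking signs consistently. To do this I would work on the smooth chart $\mathbb{A}^m\to\Exp(C|D)$. There the boundary coordinate $x$ of the last expansion has $\mathbb{G}_m$-weight $\pm1$, and $\cE=V(x)$. I would check directly, in the blow-up model $\mathbb{C}_1=\mathrm{Bl}_{p\times0}(C\times\mathbb{A}^1)$, that $x$ transforms under the rubber action oppositely to the coordinate $s$ on the bubble. The point is that $s=t/x$, where $t$ is a coordinate on $C$ at $p$. Hence $x$ has weight $-1$ whenever $s$ has weight $1$, and so $\cO(\cE)|_F=\cO(-1)$.
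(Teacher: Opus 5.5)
Your proposal is correct, but it takes a genuinely different route from the paper. The paper never computes the normal line on the expansion stack. Instead it lifts to Hassett spaces, where the reduction map $\rho_{\cA,\cB}\colon \Mbar_{g,\cA}\to\Mbar_{g,\cB}$ is an ordinary blow-up (\cite[Remark 4.6]{hassett}). Its exceptional divisor has fibres $\PP^{i-1}$ with normal bundle $\cO_{\PP^{i-1}}(-1)$, pulled back from $\cO(-1)$ on $\mathcal{M}_{0,2}\cong B\Gm$. The paper then passes through the $S_n$-quotient and relative coarse map $\gamma$ of Proposition \ref{prop:quotient}, using the triangle $\PP^{i-1}\to\cP(1,\dots,i)\to B\Gm$ and a degree comparison to force $d=-1$.

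You work intrinsically. Near $F$, $\cE$ is the preimage of the branch of the boundary of $\Exp(C|D)$ belonging to $q$, so $\cO_{\cY}(\cE)|_F$ is the smoothing line $T_qB\otimes T_qC'$. The rubber torus of $B$ acts on this line with weight opposite to that of $s$.

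\emph{What each route buys.} Your argument explains conceptually why the answer is $\cO(-1)$ regardless of the weights $(1,\dots,i)$. The magnitude is forced because a node-smoothing parameter has torus weight $\pm1$, and the sign comes from the tangent weight at $\infty$. It also bypasses Hassett's wall-crossing results and Proposition \ref{prop:quotient} entirely. The cost is the sign bookkeeping, which the paper gets for free from the classical fact about exceptional divisors of ordinary blow-ups.

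\emph{A step to write out.} Make the local identification of $\cE$ with the preimage of $V(x_q)$ explicit. It needs two facts:
\begin{enumerate}
\item While $q$ persists, the number of points on $B$ is locally constant, by logarithmic flatness.
\item Smoothing $q$ produces a last component with more than $i$ points. This holds because $C'$ carries at least one point of $Z$ away from $q$: stability condition (4) for $z\in\cZ\subset\cX$ gives $C'$ at least $i+1$ points, exactly $i$ of which sit at $\tilde p_r(z)$.
\end{enumerate}
Without the second fact, $\cE$ would not be locally the pullback of a boundary branch.
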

\begin{proof}

We start by lifting the problem to the Hassett space as follows.

Associated to $\bi-1_r$ and $\bi$ are respectively the weights 
$$\cA=(1-i_1\delta_n, \dots, 1-(i_r-1)\delta_n, \dots, 1-i_\ell\delta_n ,\underbrace{\delta_n, \delta_n, \dots, \delta_n}_{n \text{ times}})$$ and $$\cB=(1-i_1\delta_n, \dots, 1-i_r\delta_n, \dots, 1-i_\ell\delta_n ,\underbrace{\delta_n, \delta_n, \dots, \delta_n}_{n \text{ times}})$$ from Definition \ref{defn:wtAbi}. 

By \cite[Proposition 4.5]{hassett}, the map $\rho_{\cA, \cB}: \Mbar_{g, \cA}\to \Mbar_{g, \cB}$ is an isomorphisms outside the boundary divisors of the form $\Mbar_{0,\cA_{I_r}} \times \Mbar_{g,\cA_J}$ with $$\cA_{I_r}=(1-(i_r-1)\delta_n, \underbrace{\delta_n, \dots, \delta_n }_{i_r \text{ times} }, 1) \ \text{ and } \ \cA_J=(1-i_1\delta_n, \dots, \underbrace{1}_{r\text{-th position}}, \dots, 1-i_\ell\delta_n,  \underbrace{\delta_n, \dots, \delta_n}_{n-i_r \text{ times} })),$$
where $\rho_{\cA, \cB}$ contracts the rational subcurves parametrized by $\Mbar_{0,\cA_I}$.  Analyzing the weights $\mathcal{A}_{I_r},$ we see that $$\Mbar_{0, \mathcal{A}_{I_r}}\cong [((\mathbb{A}^1)^i\setminus 0)/\mathbb{G}_m] \cong \mathbb{P}^{i_r-1},$$ and that the natural map $\Mbar_{0, \cA_{I_r}} = [((\mathbb{A}^1)^i\setminus 0)/\mathbb{G}_m]\to B\mathbb{G}_m$ agrees with the forgetful map $\Mbar_{0, \cA_{I_r}}\to \mathcal{M}_{0,2}\cong B\mathbb{G}_m,$ where the marking with weight 1 is identified with $\infty\in \PP^1,$ and the one with weight $1-(i_r-1)\delta_n$ is identified with $0\in \PP^1.$ Let $\Mbar_{g,\cB}^{(i_r)}\subset \Mbar_{g,\cB}$ be the (disconnected) closed substack of $\Mbar_{g, \cB}$ cut out by the condition that any $i_r$ of the $n$ markings with weight $\delta_n$ collide with the marking with weight $1-i_r\delta_n.$ We observe that each connected component of $\Mbar_{g,\cB}^{(i_r)}$ is isomorphic to $\Mbar_{g, \cA_J}.$ Then it is known \cite[Remark 4.6]{hassett} that the morphism $\rho_{\cA, \cB}$ is the blow-up of $\Mbar_{g,\cB}$ with center $\Mbar_{g,\cB}^{(i_r)} ,$ and the exceptional divisor of this blow-up is $$\tilde{\cE}^{(i_r)}_{g, \cA} := \bigsqcup_{I_r\subset [n]: |I_r| = i_r} \Mbar_{0, \cA_{I_r}}\times \Mbar_{g, \cA_J}\cong \Mbar_{g, \cB}^{(i_r)}\times \PP^{i_r-1}.$$ Therefore, the normal bundle of the exceptional divisor $\cO(\cE^{(i_r)}_{g, \cA})|_{\cE^{(i_r)}_{g, \cA}}$ agrees with the pullback of $\cO_{\PP^{i_r-1}}(-1),$ which is in turn the pullback of $\mathcal{O}(-1)$ along the map $\Mbar_{0, \cA_{I_r}}\to \mathcal{M}_{0,2}\cong B\mathbb{G}_m.$

Passing to the setting of logarithmic Hilbert schemes, we recall the forgetful maps $\phi_\cA: \Mbar_{g, \cA}\to \Mbar_{g,\ell},$ $\phi_\cB: \Mbar_{g, \cB}\to \Mbar_{g,\ell}$ and use the notation $$\tilde \cY=\phi_{\mathcal{A}}^{-1}(C, p_1,\dots,p_{\ell}) \ \text{and} \  \tilde{\cX} = \phi_{\mathcal{B}}^{-1}(C, p_1,\dots,p_{\ell})$$ then by Proposition \ref{prop:quotient}, we have relative coarse moduli space morphisms $[{\tilde \cY}/S_{n}]\to \Himor$ and $[{\tilde \cX}/S_{n}]\to \Hi$ over $\Exp(C|D).$ 
Let $\gamma: \tilde{\cY}\to \Himor$ be the composition of the quotient map and the relative coarse moduli space morphism.

Let $z\in \cZri,$ and pick any pre-image $\tilde{z}\in \Mbar_{g,\cB}^{(i_r)}\subset \Mbar_{g, \cB}$ along the $S_n$-quotient map. There is a commutative diagram \[\begin{tikzcd}
	{\rho_{\cA, \cB}^{-1}(\tilde{z})\cong \PP^{i_r-1}} & \\
	{\piri^{-1}(z)\cong \mathcal{P}(1,\dots, i_r)} & {\mathcal{M}_{0,2}\cong B\mathbb{G}_m}
	\arrow["\gamma|_{\rho_{\cA,\cB}^{-1}(\tilde{z})}"', from=1-1, to=2-1]
	\arrow["\alpha", from=1-1, to=2-2]
	\arrow["\beta"', from=2-1, to=2-2]
\end{tikzcd}\]

Because $\tilde{\cE}^{(i_r)}_{g, \cA}\cap \tilde{\cY}\subset \tilde{\cY}$ is pulled back from $\cEri\subset \Himor,$ we have $$\cO_{\tilde{\cY}}(\tilde{\cE}^{(i_r)}_{g, \cA}\cap \tilde{\cY}) = \gamma^* \cO_{\Himor}(\cEri).$$ Restricting to $\rho_{\cA,\cB}^{-1}(\tilde{z}),$ this becomes $$\cO_{\PP^{i_r-1}}(-1)\cong \gamma^*\cO_{\Himor}(\cEri)|_{(\piri)^{-1}(z)}.$$

On the other hand, we know that $$\cO_{\Himor}(\cEri)|_{(\piri)^{-1}(z)} = \beta^* \cO(d) = \cO_{\cP(1,\dots, i_r)}(d)$$ for some $d\in \mathbb{Z}.$ Comparing degrees yields $d = -1$ as desired.

\end{proof}


\begin{proof}[Proof of Theorem \ref{thm:D=p}]
We verify that the data of $\cE^{(r)}_{\bi}\subset \Hilb^n(C|D)_{\le \bi-1_r}\to \Hilb^n(C|D)_{\le \bi}$ and $\cE_{\bi}^{(r)}\to \cZ^{(r)}_{\bi}$ satisfies the hypotheses of Theorem \ref{thm:blowdown}. Smoothness of the stacks and the divisor have been proven in Lemmas \ref{cor:Hilbsm} and \ref{lem:Cart}. Lemma \ref{lem:O(-1)} verifies that $\cE^{(r)}_{\bi}\to \cZ^{(r)}_{\bi}$ is a twisted weighted projective bundle with weights $(1,\dots, i_r)$ and proves the desired statement about normal bundle of the divisor. Theorem \ref{thm:blowdown} then implies that there exists a unique space contracting $\cE$ to $\cZ$ that fits into the diagram. This implies the weighted blow-down must be isomorphic to $\Hilb^n(C|D)_{\le \bi}$.
\end{proof}
\subsection{Main theorem}

\begin{customthm}{A} \label{thm:main}
    The map $\Hilb^n(C|D)\to \Hilb^n(C) = \Sym^n(C)$ factors through a sequence of $n\cdot \ell$ weighted blow-ups $$\Hilb^n(C|D)=\Hilb^n(C|D)_{\le \mathbf{0}} \to \cdots  \to \Hilb^n(C|D)_{\leq \bi}  \to \cdots \to \Hilb^n(C|D)_{\le \boldsymbol{n}}=\Hilb^n(C)$$ where $\bi\in \mathbb{N}^\ell$ satisfies that $\mathbf{0}\leq \bi\leq \boldsymbol{n} = (n,\dots,n)$ entry-wise. Each $\Hilb^n(C|D)_{\leq \bi}$ is an intermediate moduli space of subschemes on expanded degenerations of $C$ along $D$ and each weighted blow-up is of the form \begin{equation} \label{eq: onestep}
       \Hilb^n(C|D)_{\le (i_1,\dots, i_j-1,\dots , i_\ell)} \ra \Hilb^n(C|D)_{\le (i_1, \dots ,i_\ell)}.
    \end{equation}
     with weights $1, \dots , i_j$, for some $0 \le j \le \ell$.
\end{customthm}

\begin{proof}
    Theorem \ref{thm:D=p} ensures that each of the maps described above is a weighted blow-up with the desired center and weights. Furthermore, given the two vectors $(i_1,\dots, i_j-1,\dots , i_k-1, \dots i_\ell)$ and $(i_1, \dots ,i_\ell)$ with $j \lneq k$, the composition of two weighted blow-ups of the form (\ref{eq: onestep}) $$\Hilb^n(C|D)_{\le (i_1,\dots, i_j-1,\dots , i_k-1, \dots i_\ell)} \ra \Hilb^n(C|D)_{\le (i_1, \dots ,i_\ell)} $$ is independent of the order with which the blow-ups are performed. This follows from the analogous proposition on Hassett spaces proved in \cite[Proposition 4.4]{hassett}.
\end{proof}

\begin{customcor}{C}\label{cor:P1}
    The logarithmic Hilbert stacks $\Hilb^n(\PP^1|0)$, $\Hilb^n(\PP^1|\infty)$ and $\Hilb^n(\PP^1|0+\infty)$ are toric stacks.
\end{customcor}

\begin{proof}
    From Theorem \ref{thm:D=p}, both logarithmic Hilbert stacks are iterated weighted blow-ups of $\Hilb^n(\PP^1),$ and we observe that each blow-up center is the proper transforms of some toric substack of $\PP^n\cong\Hilb^n(\PP^1).$ In particular, the centers are toric substacks themselves, so the weighted blow-ups are all toric.
\end{proof}

\begin{remark}
Continuing the discussion set out in §\ref{sec:2P10}, we note that the geometry of logarithmic Hilbert schemes of $\PP^1$ is particularly accessible via homogeneous polynomials on $\PP^1$: the blow-up centers are proper transforms of their coordinate subspaces. As a further illustration, we include (Figure \ref{fig:Hilb3P1}) the toric fan picture of $\Hilb^3(\PP^1|0).$
\end{remark}
\input{Hilb3P1}

\begin{corollary}
    The intermediate moduli spaces $\Hilb^n(C|D)_{\le \bi},$ in particular the logarithmic Hilbert stack $\Hilb^n(C|D),$ have projective coarse moduli spaces.
\end{corollary}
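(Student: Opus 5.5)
Since $\Hilb^n(C)=\Sym^n(C)$ is a smooth projective variety, I will argue by induction along the chain of Theorem \ref{thm:main}, which starts at $\Hilb^n(C|D)_{\le \boldsymbol{n}}=\Sym^n(C)$ and reaches any $\Hilb^n(C|D)_{\le\bi}$ through finitely many steps. Each step is a weighted blow-up $f:\cY\to\cX$ of smooth separated tame Deligne--Mumford stacks. It therefore suffices to show that if $\cX$ has projective coarse moduli space $X$, then so does $\cY$. Properness of the coarse spaces of the intermediate stacks is automatic along the way, because weighted blow-ups are proper and $\Sym^n(C)$ is proper.

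For the inductive step I use the Quek--Rydh description $\cY=\mathcal{P}roj_{\cX}\bigl(\bigoplus_{k\ge 0}\mathcal{I}_k\bigr)$, where $\mathcal{I}_\bullet$ is the Rees algebra of the weighted center. This Proj carries a tautological $\cO_{\cY}(1)=\cO_{\cY}(-\cE)$ which is $f$-ample. Some power $\cO(-N\cE)$ then descends, or is at least relatively ample in the sense of coarse spaces: for $N$ divisible by the orders of all stabilizers, which are finite products of roots of unity, the sheaves $\cO(-N\cE)$ and the ample line bundle $L$ on $\cX$ pulled back from $X$ descend to line bundles on the coarse spaces. On coarse spaces, $Y\to X$ is then $\operatorname{Proj}_X$ of the finitely generated algebra of invariants $\bigl(\bigoplus_k \mathcal{I}_{kN}\bigr)$ pushed to $X$. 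This uses that in characteristic zero, taking coarse spaces of tame stacks commutes with this relative Proj. Hence $Y\to X$ is a projective morphism, and since $X$ is projective, $Y$ is projective: the bundle $\cO(-N\cE)\otimes L^{\otimes m}$ is ample on $Y$ for $m\gg 0$.

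The main obstacle is justifying that the coarse map $Y\to X$ is projective, i.e.\ that the relative Proj structure of the weighted blow-up passes to coarse moduli spaces. If that step is awkward, my fallback is the Hassett-space presentation of Proposition \ref{prop:quotient}. The coarse space of $\Hilb^n(C|D)_{\le\bi}$ coincides with that of $[\phi_{\cA}^{-1}(C,p_1,\dots,p_\ell)/S_n]$. The stack $\phi_{\cA}^{-1}(C,p_1,\dots,p_\ell)$ is a closed fiber of $\Mbar_{g,\cA}\to\Mbar_{g,\ell}$, and Hassett showed that $\Mbar_{g,\cA}$ has a projective coarse moduli space. A closed substack then has projective coarse space, and so does its quotient by the finite group $S_n$. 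This fallback argument works whenever $2g-2+\ell>0$. For the remaining cases $C=\PP^1$ with $\ell\le 2$, I use the toric description in Corollary \ref{cor:P1}: the fans are complete star subdivisions of the fan of $\PP^n$, hence projective.
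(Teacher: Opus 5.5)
Your main argument is the paper's: induct along the chain of weighted blow-ups of Theorem \ref{thm:main}, use that each weighted blow-up induces a projective morphism of coarse moduli spaces, and conclude from the projectivity of $\Sym^n(C)$. The paper only asserts the coarse-projectivity step, which you justify via the Rees-algebra $\mathcal{P}roj$ and the descent of $\cO(-N\cE)$. Your Hassett fallback is also a valid independent route, with one correction: $\phi_{\cA}^{-1}(C,p_1,\dots,p_\ell)$ is only finite over, not a closed substack of, $\Mbar_{g,\cA}$ when $(C,p_1,\dots,p_\ell)$ has automorphisms. This still suffices, because a proper algebraic space finite over a projective scheme is projective.
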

\begin{proof}
    This is because a weighted blow-up induce a projective morphism on coarse moduli spaces, and $\Hilb^n(C)\cong \Sym^n(C)$ is projective.
\end{proof}

\section{Integral Chow ring of logarithmic Hilbert schemes} 
In this section, we give a presentation of the integral Chow ring of $\Hilb^n(C|D)_{\le \bi}$ in terms of $\mathrm{Sym}^n(C)$ and exceptional divisors $\{\epsilon_{j}^{(r)}\}_{i_r+1\leq j\leq n}$ (Theorem \ref{thm:chowringD}). The calculation involves determining the normal bundle of the weighted blow-up centers and applying Keel's formula \ref{keel} to the iterated weighted blow-ups $\Hilb^n(C|D)_{\le \bi}\to \mathrm{Sym}^n(C)$. We will focus on the case of $D = p,$ which generalizes in a straightforward way.

\subsection{The equivariant top Chern class of $\boldsymbol{\cN}_{\boldsymbol{\cZ}_i}\textbf{Hilb}^n\boldsymbol{(C|p)}_{\le i}$} \label{subsec: topChernClass}

The key ingredient for the Chow ring computation, is to understand the $\mathbb{G}_m$-equivariant top Chern class of the normal bundle $\mathcal{N}_{\cZ_{\bi}}\Hilb^n(C|p)_{\leq \bi}$. We will compute it by introducing a nested sequence of hypersurfaces in the intermediate moduli spaces.

\begin{definition}
Let $H_{n,i}\subset \Hilb^n(C|p)_{\leq i}$ be the hypersurface specified by one point lying on $\tilde{p}.$ As stacks, $H_{n,i}\cong \Hilb^{n-1}(C|p)_{\leq i-1}.$
\end{definition}
As $H_{n,i}\subset\Hilb^n(C|p)_{\leq i}$ is a Cartier divisor, the normal bundle $\mathcal{N}_{H_{n,i}}\Hilb^n(C|p)_{\leq i} $ agrees with $\mathcal{O}(H_{n,i})|_{H_{n,i}}.$ We start by recalling the following fact about the base case $H_{n,n}\subset \Sym^{n}(C).$

\begin{lemma}\cite[Remark 1.2]{Moonen_Polishchuk} \label{lem:LonHnn}
  There is a well-defined line bundle $\mathcal{L}$ on $H_{n,n}\cong \Sym^{n-1}(C)$ that is isomorphic to:\begin{enumerate}
    \item the normal bundle $\mathcal{N}_{H_{n,n}}\Sym^n(C),$
    \item the line bundle $\mathcal{O}_{\Sym^{n-1}(C)}(H_{n-1,n-1})$ associated to the divisor $H_{n-1,n-1}\subset \Sym^{n-1}(C),$
    \item restriction of $\mathcal{N}_{H_{n+k,n+k}}\Sym^{n+k}(C)$ and $\mathcal{O}_{\Sym^{n+l}(C)}(H_{n+l,n+l})$ for any $k\geq 0, l\geq -1.$
  \end{enumerate}
\end{lemma}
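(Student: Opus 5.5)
The plan is to realize $H_{n,n}$ concretely as the image of the embedding $s_{n-1,n}^{(p)}:\Sym^{n-1}(C)\hookrightarrow \Sym^n(C)$, $Z\mapsto Z+p$. Its normal bundle is then $\cO_{\Sym^n(C)}(H_{n,n})|_{H_{n,n}}$, because $H_{n,n}$ is a Cartier divisor on a smooth variety. We set $\mathcal{L}:=(s^{(p)}_{n-1,n})^*\cO_{\Sym^n(C)}(H_{n,n})$. Item (1) is then a tautology, and the content of the lemma lies in items (2) and (3).

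For item (2), we identify $H_{n,n}$ with the divisor $\{D\in\Sym^n(C): D\geq p\}$. This is the zero locus of a section of $\cO(H_{n,n})$: on $\Sym^n(C)$, the divisor $H_{n,n}$ is the pushforward/norm of $\sum_{k}\mathrm{pr}_k^*(p)$ on $C^n$. The key step is to compute the restriction $(s^{(p)}_{n-1,n})^*H_{n,n}$ at the level of line bundles. We work on $C^{n}$ with the $S_n$-invariant line bundle $\boxtimes_{k=1}^n \cO_C(p)$, which descends to $\cO(H_{n,n})$ since $\mathrm{char}\,k=0$ and the stabilizers act trivially on fibers. Pulling back along $C^{n-1}\to C^n$, $(x_1,\dots,x_{n-1})\mapsto(x_1,\dots,x_{n-1},p)$, gives $\boxtimes_{k=1}^{n-1}\cO_C(p)\otimes \cO_C(p)|_p$. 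The last factor is the trivial line bundle $T_pC$ tensored over the base point, so the pullback is the $S_{n-1}$-invariant bundle descending to $\cO_{\Sym^{n-1}(C)}(H_{n-1,n-1})$, possibly twisted by the one-dimensional vector space $T_pC$, which is trivial. We must check that the descended bundles agree, not merely their pullbacks to $C^{n-1}$. Since the linearizations match, descent of equivariant bundles to the coarse space gives an isomorphism, proving (2).

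For item (3), the same computation applied to $s^{(p)}_{n+l-1,n+l}$ shows that $\cO(H_{n+l,n+l})$ pulls back to $\cO(H_{n+l-1,n+l-1})$ along each inclusion. Composing the inclusions $\Sym^{n-1}\hookrightarrow\Sym^{n}\hookrightarrow\cdots$ therefore yields, by induction on $l$, that every $\cO(H_{n+l,n+l})$ restricts to $\mathcal{L}$. The normal bundle statement then follows from (1) applied at level $n+k$, together with the restriction identity for $l=k-1$ and $l=k$. In particular, $\mathcal{L}$ is independent of the level, which is what "well-defined" means here.

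The main obstacle is the descent step. One must ensure that the isomorphism on $C^{n-1}$ is $S_{n-1}$-equivariant with the correct linearization along the big diagonals. Otherwise the descended bundles could differ by a character or a torsion twist. To handle this, I would first verify the identity on the open set where $p$ does not occur, where all maps are étale-locally products and the identity holds. Then I would argue that two line bundles on the smooth variety $\Sym^{n-1}(C)$ that agree off a divisor differ by $\cO$ of a multiple of $H_{n-1,n-1}$. Finally, I would fix that multiple by comparing degrees on a curve $C\hookrightarrow\Sym^{n-1}(C)$, $x\mapsto x+(n-2)q$ with $q$ a general point. The alternative is to cite \cite[Remark 1.2]{Moonen_Polishchuk} directly.
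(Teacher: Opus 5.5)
Your proof is correct. It differs from the paper only in that the paper gives no argument of its own: it imports the statement from \cite[Remark 1.2]{Moonen_Polishchuk}. You instead derive it directly. You pull $\cO(H_{n,n})$ back to $C^n$, where it is $\boxtimes_k\cO_C(p)$ with the permutation linearization. You then restrict along $x\mapsto(x,p)$, which is compatible with $s^{(p)}_{n-1,n}$ through the quotient maps, note that the extra factor $\cO_C(p)|_p$ is a constant one-dimensional twist, and descend. The induction for (3) is also fine.

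Two small remarks. First, $H_{n,n}$ is not the pushforward or norm of $\sum_k\mathrm{pr}_k^*(p)$; the pushforward is $n!\,H_{n,n}$. What you actually use, correctly, is $\pi^*H_{n,n}=\sum_k\mathrm{pr}_k^*(p)$ together with descent of the invariant section.

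Second, the descent step you flag as the main obstacle is harmless, so the excision-plus-degree fallback (also valid) is not needed. Suppose $\pi^*L\cong\pi^*M$ on $C^{n-1}$. Then the canonical linearization of $\pi^*(L\otimes M^{-1})\cong\cO$ is a character of $S_{n-1}$. It must be trivial, because transpositions act trivially on fibers over the big diagonal. Hence $L\otimes M^{-1}\cong(\pi_*\cO_{C^{n-1}})^{S_{n-1}}=\cO$, so $\pi^*\colon\Pic(\Sym^{n-1}(C))\to\Pic(C^{n-1})$ is injective, and agreement upstairs already suffices.

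As for what each route buys: the citation is short and comes with Moonen--Polishchuk's identification of $\cL$ with $\cO(1)$ on $\Sym^n(C)\to\Pic^n(C)$ for $n>2g-2$, which the subsequent remark uses. Your computation is elementary and canonical. Run with $x\mapsto(x,q)$ in place of $(x,p)$, it also shows that $\cL_p$ is preserved under $(s^{(q)}_{n,n+1})^*$ for every $q\in C$, which the introduction asserts without proof.
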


\begin{remark}
    When $n>2g-2,$ \cite[§1]{Moonen_Polishchuk} also identifies the line bundle $\cL$ with $\cO_{\Sym^n(C)}(1)$, where $\Sym^n(C)$ has the structure of a projective bundle over the Jacobian. Here, the degree-$n$ Picard group $\Pic^n(C)$ is implicitly identified with the Jacobian $\mathrm{Jac}(C)$ by $D\mapsto D-np$.
\end{remark}

Now we turn to the normal bundles of $H_{n,i}\subset \Hilb^n(C|p)_{\leq i}$ and relate them to $\mathcal{L}.$ In the remainder of the section, we refine the notations of $\cE_i, \epsilon_i,$ and $\cZ_i$, since it will be necessary to specify which ambient spaces the classes live in.

\begin{definition}\label{def: doubleindexes}
     Let $\cE_{n,i}$ and $\epsilon_{n,i}=[\cE_{n,i}]\in \mathrm{CH}^1(\Hilb^n(C|p)_{\leq i-1})$ denote the (class of) exceptional divisor introduced along the weighted blow-up $\Hilb^n(C|p)_{\leq i-1}\to \Hilb^n(C|p)_{\leq i}.$ They specify the divisor of subschemes which has length-$i$ supported on the last bubble. 
    
    In general, for $k \geq i,$ let $\cE_{n,k}$ and $\epsilon_{n,k}\in \mathrm{CH}^1(\Hilb^n(C|p)_{\leq i})$ be the proper transforms of $\cE_{n,k}$ and $\epsilon_{n,k}$ on $\Hilb^n(C|p)_{\leq k},$ which agree with the pullback along the iterated weighted blow-ups $\Hilb^n(C|p)_{\leq i}\to \Hilb^n(C|p)_{\leq k}.$

    Let $\mathcal{Z}_{n,i}\subset \Hilb^{n}(C|p)_{\leq i}$ denote the closed substack where exactly $i$ points lie on the endpoint.
\end{definition}

\begin{lemma}\label{lem:blwoupnormal}
  Let $f: \Hilb^n(C|p)_{\leq i-1}\to \Hilb^n(C|p)_{\leq i}$ be the weighted blow-up, then 
$${c_1(\mathcal{O}_{\Hilb^n(C|p)_{\leq i-1}}(H_{n,i-1})) = f^* c_1(\mathcal{O}_{\Hilb^n(C|p)_{\leq i}}(H_{n,{i}}))-i\epsilon_{n,i}}.$$
  
  Restricting to the proper transform morphism $f: H_{n,i-1}\to H_{n,i},$ we have 
  \begin{equation}\label{eqn:NHni}{c_1(\mathcal{N}_{H_{n,i-1}} \Hilb^n(C|p)_{\leq i-1}) = f^* c_1(\mathcal{N}_{H_{n,i}})-i\epsilon_{ n-1, i-1}, \text{where }\epsilon_{n,i}|_{H_{n,i-1}} = \epsilon_{n-1, i-1}.}\end{equation}

\end{lemma}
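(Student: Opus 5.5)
The plan is to compare $f^*H_{n,i}$ with the proper transform $H_{n,i-1}$ by computing the multiplicity of the exceptional divisor $\cE_{n,i}$ in $f^*H_{n,i}$. By Theorem \ref{thm:D=p}, $f$ is the weighted blow-up of $\Hilb^n(C|p)_{\le i}$ along $\cZ_{n,i}$ with weights $(1,\dots,i)$, so this multiplicity is the weighted order of vanishing along $\cZ_{n,i}$ of a local equation of $H_{n,i}$. Since $\cZ_{n,i}\subset H_{n,i}$ and $H_{n,i}$ is irreducible (it is isomorphic to $\Hilb^{n-1}(C|p)_{\le i-1}$), we get
\[f^*H_{n,i}=H_{n,i-1}+m\,\cE_{n,i}.\]
Here $H_{n,i-1}$ is the proper transform: the general point of $H_{n,i}$ lies outside $\cZ_{n,i}$, and there one point sits on $\tilde p$ in both spaces. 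The task is then to show $m=i$.

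To compute $m$, work étale locally on $\Hilb^n(C|p)_{\le i}$ near a point of $\cZ_{n,i}$. Near $\tilde p$ choose a local coordinate $t$ centered at the endpoint. The $i$ points near $\tilde p$ are encoded by the elementary symmetric functions $e_1,\dots,e_i$ of their $t$-coordinates; the remaining $n-i$ points give transverse smooth coordinates. In these coordinates $\cZ_{n,i}=V(e_1,\dots,e_i)$, and by the proof of Lemma \ref{lem:TWPB} the weights $(1,\dots,i)$ are attached to $e_1,\dots,e_i$ respectively. The divisor $H_{n,i}$ ("some point lies on $\tilde p$") is cut out by $e_i=\prod t_k$, which has weight $i$. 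The weighted blow-up assigns to $e_k$ the valuation $k$ along $\cE$, so the pullback of $e_i$ vanishes to order exactly $i$ along $\cE_{n,i}$. This gives $m=i$. Taking first Chern classes yields the first formula, using $\mathcal{O}(f^*H)=f^*\mathcal{O}(H)$.

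I expect the main obstacle to be the claim $m=i$. Theorem \ref{thm:D=p} was obtained abstractly from the blow-down criterion, so the weights were not given by explicit coordinates. To check that $e_i$ really has weighted order $i$, I would restrict to a fiber $\pi^{-1}(z)\cong\cP(1,\dots,i)$. On this fiber, $H_{n,i-1}\cap\pi^{-1}(z)=V(e_i)$ is a hypersurface of degree $i$, so $\mathcal{O}(H_{n,i-1})|_{\pi^{-1}(z)}\cong\mathcal{O}(i)$. Meanwhile $f^*\mathcal{O}(H_{n,i})$ is trivial on the fiber, and by Lemma \ref{lem:O(-1)} $\mathcal{O}(\cE_{n,i})$ restricts to $\mathcal{O}(-1)$. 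Comparing degrees gives $0=i-m$, so $m=i$ independently of the local coordinates.

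For the second statement, restrict the first identity to $H_{n,i-1}$. Because $H_{n,i-1}$ is a Cartier divisor, $\mathcal{N}_{H_{n,i-1}}=\mathcal{O}(H_{n,i-1})|_{H_{n,i-1}}$, and similarly for $H_{n,i}$. The pullback commutes with restriction along the induced map $f:H_{n,i-1}\to H_{n,i}$. It remains to identify $\epsilon_{n,i}|_{H_{n,i-1}}$ under $H_{n,i-1}\cong\Hilb^{n-1}(C|p)_{\le i-2}$. Its points with $i$ points on the last bubble, one of which is at $\tilde p$, correspond to the points of $\Hilb^{n-1}$ with $i-1$ points on the last bubble, i.e.\ to $\cE_{n-1,i-1}$. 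Transversality of this intersection follows from the local coordinates above: $e_i=0$ cuts the fiber transversally to $\cE$, which can also be seen from the smoothness of both over $\Exp(C|p)$. Hence $\epsilon_{n,i}|_{H_{n,i-1}}=\epsilon_{n-1,i-1}$, which gives \eqref{eqn:NHni}.
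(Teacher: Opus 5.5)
Your proof is correct, and its skeleton agrees with the paper's. Both arguments reduce to showing that $f^*H_{n,i}$ and $H_{n,i-1}$ differ by an integer multiple $m$ of $\cE_{n,i}$. The paper gets this from the excision sequence for $\Hilb^n(C|p)_{\le i-1}\smallsetminus\cE_{n,i}$. You get it by comparing supports and noting that $f$ is an isomorphism off $\cE_{n,i}$.

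The difference lies in how $m=i$ is obtained. The paper simply appeals to the weighted blow-up formula in rational Chow groups from Arena's thesis. This implicitly uses that $H_{n,i}$ has an equation of weighted degree $i$ along $\cZ_{n,i}$, the same heuristic the paper invokes in the top Chern class computation. Your fiber computation instead uses only results already established in the paper:
\begin{itemize}
\item on $F=\pi^{-1}(z)\cong\mathcal{P}(1,\dots,i)$ the pullback $f^*\mathcal{O}(H_{n,i})$ is trivial;
\item $\mathcal{O}(\cE_{n,i})|_F\cong\mathcal{O}(-1)$ by Lemma \ref{lem:O(-1)};
\item $H_{n,i-1}\cap F=V(e_i)$, so $\mathcal{O}(H_{n,i-1})|_F\cong\mathcal{O}(i)$.
\end{itemize}
Hence $\mathcal{O}(i-m)$ is trivial in $\Pic(F)\cong\mathbb{Z}$, forcing $m=i$. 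This is the same degree-comparison device as in the proof of Lemma \ref{lem:O(-1)}. It is self-contained and works integrally. As you observe, it also sidesteps a real issue: Theorem \ref{thm:D=p} comes from the abstract blow-down criterion, so no explicit local coordinates carrying the weights $(1,\dots,i)$ are available.

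You also give an actual argument for $\epsilon_{n,i}|_{H_{n,i-1}}=\epsilon_{n-1,i-1}$, which the paper only asserts. You identify the intersection modularly and use smoothness over $\Exp(C|p)$ for transversality.

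One small addition: for $i=1$ your argument is vacuous, since $F$ is a point and $H_{n,0}=\emptyset$. In that case $f$ is the weight-$1$ blow-up of the Cartier divisor $\cZ_{n,1}=H_{n,1}$, hence an isomorphism. So $f^*H_{n,1}=\cE_{n,1}$ and $m=1$ directly.
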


\begin{proof}
    
    Because the line bundles $\mathcal{O}_{\Hilb^n(C|p)_{\leq i-1}}(H_{n,i-1})$ and $f^* \mathcal{O}_{\Hilb^n(C|p)_{\leq i}}(H_{n,{i}})$ agree on $\Hilb^n(C|p)_{\leq (i-1)}\setminus \cE_{n, i-1},$ the excision sequence associated to the open immersion $\Hilb^n(C|p)_{\leq i-1}\setminus \cE_{n, i}\subset \Hilb^n(C|p)_{\leq i-1}$ implies that $c_1(\mathcal{O}_{\Hilb^n(C|p)_{\leq i-1}}(H_{n,i-1})) = f^* c_1(\mathcal{O}_{\Hilb^n(C|p)_{\leq i}}(H_{n,{i}}))+m\epsilon_{n,i}$ for some $m\in \mathbb{Z}.$ The fact that $m = -i$ now follows from the weighted blow-up formula in rational Chow group \cite{ArenaThesis}.
    
\end{proof}

\begin{remark}
  This agrees with the Stanley--Reisner ring calculation on $\Hilb^n(\mathbb{P}^1|0)_{\leq i}$ from §\ref{subsec:SR}, in which we have $\rho_1 + n \epsilon_n = \tau,$ with $\tau$ being the pullback of the normal bundle.
\end{remark}


Consider the filtration of closed substacks $$\mathcal{Z}_{n,i} =\Hilb^{n-i}(C|p)_{\le 0}  = H_{n-i+1, 1}\subset \Hilb^{n-i+1}(C|p)_{\le{1}}=H_{n-i+2, 2} \subset \cdots$$ $$\cdots \subset  \Hilb^{n-1}(C|p)_{\le i-1}= H_{n,i}\subset \Hilb^n(C|p)_{\leq i}.$$ Applying Lemma \ref{lem:blwoupnormal} to each step of the filtration $H_{n-k, i-k}\subset \Hilb^{n-k}(C|p)_{\leq i-k},$ we have the following formula for $c_{\mathrm{top}}^{\mathbb{G}_m} (\cN_{\cZ_{n,i}}\Hilb^n(C|p)_{\leq i}).$

\begin{corollary}
  The top Chern class of $\mathbb{G}_m$-equivariant normal bundle of $\mathcal{Z}_{i}\subset \mathcal{X}_i$ is given by 
  $$c_{\mathrm{top}}^{\mathbb{G}_m} (\cN_{\cZ_{n,i}}\Hilb^n(C|p)_{\leq i}) = \prod_{k=1}^i (k\cdot t + c_1(\mathcal{L})- \sum_{j = i-k+1}^{n}j\epsilon_{n-i, j-i+k}).$$

\end{corollary}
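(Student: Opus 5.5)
The plan is to exhibit $\cZ_{n,i}$ as the bottom of the filtration
\[
\cZ_{n,i}=H_{n-i+1,1}\subset H_{n-i+2,2}\subset\cdots\subset H_{n,i}\subset \Hilb^n(C|p)_{\leq i},
\]
where each inclusion $H_{n-k+1,i-k+1}\subset \Hilb^{n-k+1}(C|p)_{\leq i-k+1}\cong H_{n-k+2,i-k+2}$ is a Cartier divisor. The normal bundle of $\cZ_{n,i}$ then has a filtration whose graded pieces are the line bundles $\cN_{H_{n-k+1,i-k+1}}\Hilb^{n-k+1}(C|p)_{\leq i-k+1}$ restricted to $\cZ_{n,i}$, for $k=1,\dots,i$. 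These are the normal directions in which the $k$-th point supported on $\tilde p$ moves off the endpoint.

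The weighted structure enters through the $\mathbb{G}_m$-weights. In a local chart, the coordinates transverse to $\cZ_{n,i}$ are the elementary symmetric functions of degrees $1,\dots,i$ in the $i$ points colliding at $\tilde p$, and these carry weights $1,\dots,i$, as in Lemma \ref{lem:TWPB}. So I will take the graded piece of weight $k$ to be the $k$-th step of the filtration. Then the $\mathbb{G}_m$-equivariant top Chern class splits as
\[
c_{\mathrm{top}}^{\mathbb{G}_m}\bigl(\cN_{\cZ_{n,i}}\Hilb^n(C|p)_{\leq i}\bigr)=\prod_{k=1}^i\bigl(k t+c_1(\text{$k$-th graded line bundle})\bigr),
\]
by the splitting principle for twisted weighted bundles admitting such a filtration. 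I will justify this by matching the weighted normal bundle with the associated graded of the filtration, since the weighted normal bundle is not the ordinary normal bundle.

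The first Chern class of each graded piece comes from Lemma \ref{lem:blwoupnormal}. Applied at level $m$, formula \eqref{eqn:NHni} says that passing from $\Hilb^m(C|p)_{\leq j}$ to $\Hilb^m(C|p)_{\leq j-1}$ changes $c_1(\cN_{H_{m,j}})$ by $-j\,\epsilon_{m-1,j-1}$. Starting from $H_{m,m}\subset\Sym^m(C)$, where $c_1(\cN)=c_1(\cL)$ by Lemma \ref{lem:LonHnn}, I iterate down from $\Sym^m(C)$ to level $j$. This gives
\[
c_1\bigl(\cN_{H_{m,j}}\bigr)=c_1(\cL)-\sum_{a=j+1}^{m} a\,\epsilon_{m-1,a-1}.
\]
Restricting further to $\cZ_{n,i}$, the restriction rule $\epsilon_{m,a}|_{H_{m,a-1}}=\epsilon_{m-1,a-1}$ lowers both indices together. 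For the $k$-th step, which is the divisor $H_{n-i+k,k}$ with $m=n-i+k$ and $j=k$, these restrictions produce the classes $\epsilon_{n-i,\,\cdot}$ with the shifted indices $j-i+k$ appearing in the statement. Reindexing the sum gives the factor $kt+c_1(\cL)-\sum_{j}j\,\epsilon_{n-i,j-i+k}$.

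The main obstacle is the bookkeeping. I need to confirm that the weight-$k$ graded piece really is the normal bundle of the $k$-th divisor in the filtration, not the $(i-k+1)$-th. I also need to track the index shift $j\mapsto j-i+k$ through $i-k$ successive restrictions, taking care that $\cL$ restricts to $\cL$ (Lemma \ref{lem:LonHnn}(3)) and that the summation range $j=i-k+1,\dots,n$ comes out right. To settle the ordering I will check it against the toric case $C=\PP^1$ of \S\ref{subsec:SR}, where the ray $\rho_j$ has weights $k+j-n$.
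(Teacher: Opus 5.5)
Your argument is the paper's own: filter $\cZ_{n,i}$ by the divisors $H_{n-i+k,k}$, split the $\mathbb{G}_m$-equivariant top Chern class into one factor per weight, and compute each $c_1$ by iterating \eqref{eqn:NHni} down from $H_{m,m}\subset\Sym^m(C)$ and then restricting. Two points need correcting.

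\emph{Weight ordering.} You do not need a toric check here; the paper settles the ordering with a local equation. Near $\cZ_{n,i}$, the outermost divisor $H_{n,i}$ is $\{e_i=0\}$, where $e_i$ is the product of the $i$ colliding coordinates, so it has weight $i$. On $\{e_i=0\}$ the next divisor is $\{e_{i-1}=0\}$, of weight $i-1$, and so on down the flag. Hence weight $k$ goes with $H_{n-i+k,k}\subset\Hilb^{n-i+k}(C|p)_{\leq k}$, as in your final paragraph. Your earlier sentence assigns weight $k$ to the $k$-th inclusion $H_{n-k+1,i-k+1}\subset\Hilb^{n-k+1}(C|p)_{\leq i-k+1}$, which is backwards: it would put weight $1$ on $H_{n,i}$. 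The same picture justifies the splitting. The loci $\{e_k=\dots=e_i=0\}$ are the intrinsic substacks where at least $i-k+1$ points sit on $\tilde p$. So each $e_k$ is determined up to a unit modulo $(e_{k+1},\dots,e_i)$. This identifies the weight-$k$ piece of the weighted normal bundle with the restriction of $\cN_{H_{n-i+k,k}}\Hilb^{n-i+k}(C|p)_{\leq k}$.

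\emph{Final reindexing.} Your iteration actually produces, for the weight-$k$ factor,
\[
kt+c_1(\cL)-\sum_{a=k+1}^{n-i+k}a\,\epsilon_{n-i,a-k}
= kt+c_1(\cL)-\sum_{j=1}^{n-i}(k+j)\,\epsilon_{n-i,j}.
\]
This is exactly the $k$-th factor of $Q_{n,i}$ in Definition \ref{defn:Qmh}, the form the paper uses afterwards. It is not a reindexing of the displayed $\sum_{j=i-k+1}^{n}j\,\epsilon_{n-i,j-i+k}$. In that sum $\epsilon_{n-i,b}$ gets coefficient $b+i-k$ rather than $b+k$, and $b$ runs past $n-i$.

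Your expression is the correct one. Take $n=2$, $i=k=1$: it gives $t+c_1(\cL)-2\epsilon_{1,1}$. Now weighted-blow up $\PP^2=\Sym^2(\PP^1)$ at $2\cdot 0$ with weights $(1,2)$. The proper transform of the line $H_{2,2}$ has self-intersection $1-4\cdot\tfrac12=-1$, and $\deg\cL=\deg\epsilon_{1,1}=1$ on it, so the degrees match. The displayed sum would instead give $\epsilon_{1,1}+2\epsilon_{1,2}$, and $\epsilon_{1,2}$ does not exist on $\cZ_{2,1}\cong\Hilb^1(C|p)_{\leq 0}$. So do not force that last step. The summation as printed in the statement appears to be an index slip, and the result should be recorded in the $Q_{n,i}$ form you derived.
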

\begin{proof}
  We apply splitting principle to get the product $$\underbrace{(i\cdot t + c_1(\mathcal{N}_{H_{n,i}}\mathcal{X}_i))}_{H_{n,i}\subset \mathcal{X}_i} \underbrace{((i-1)t + c_1(\mathcal{N}_{H_{n-1,i-1}}\Hilb^{n-1}(C|p)_{\leq i-1}))}_{H_{n-1,i-1}\subset H_{n,i}}\cdots \underbrace{(t + c_1(\mathcal{N}_{H_{n-i+1, 1}}\Hilb^{n-i+1}(C|p)_{\leq 1}))}_{H_{n-i+1,1}\subset H_{n-i+2,2}}.$$ Here $H_{n,i}\subset \mathcal{X}_i$ has weight $i$ because asking for one point to be on $\tilde{p}$ is a homogeneous degree $i$ equation on local chart. Now we iterate formula (\ref{eqn:NHni}) to get the product displayed above. 
\end{proof} 

We encode the Chern class calculations in terms of the following polynomials.

\begin{definition} \label{defn:Qmh}We define the polynomials $Q_{m,h}(t_m,\dots,t_h) \in \CHs(\Sym^n(C))[t_m,\dots,t_h]$ such that
  $$Q_{m,h}(\epsilon_{m-h,m-h},\dots, \epsilon_{m-h,1}, t) = c_{\mathrm{top}}^{\mathbb{G}_m}(\mathcal{N}_{\mathcal{Z}_{m,h}}\Hilb^m(C|p)_{\leq h}) \in  \mathrm{CH}^h(\cZ_{m,h})[t]$$ where we use the identification $\cZ_{m,h}\cong \Hilb^{m-h}(C|p)_{\leq 0}.$ 
\end{definition}

Precisely, when $m>h>0,$ $$Q_{m,h}(t_m, \dots, t_{h+1}, t_h):=\prod_{k=1}^h (kt_h + c_1(\mathcal{L})-\sum_{j=1}^{m-h}(k+j)t_{j+h}).$$

When $m = h,$ $$Q_{m,m}(t_h):=\prod_{k=1}^h (kt_h + c_1(\mathcal{L})).$$ We formally define $Q_{0, k} = 0$ for all $k.$

\subsection{The case of $\textbf{ \text{Hilb}}^{\boldsymbol{n}}{\boldsymbol{(C|p)}}_{\le \boldsymbol{i}}$}
\begin{definition}\label{def:symimmersiononepoint}
    Let $p\in C.$ For $j\geq i,$ define $s_{ij}: \Sym^i(C) \ra \Sym^j(C)$ as the map given by $Z\mapsto Z + (j-i)p.$
\end{definition}

\begin{proposition}
The restriction map $\iota_{i,n}^*:\mathrm{CH}^\star(\Hilb^{n}(C|p)_{\leq i})\to \mathrm{CH}^\star(\cZ_{n,i})$ is surjective.
\end{proposition}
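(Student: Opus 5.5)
The plan is to factor the restriction through the filtration by hypersurfaces from the previous subsection,
$$\cZ_{n,i}=H_{n-i+1,1}\subset H_{n-i+2,2}\subset\cdots\subset H_{n,i}\subset \Hilb^n(C|p)_{\leq i},\qquad H_{n-k,i-k}\cong \Hilb^{n-k-1}(C|p)_{\leq i-k-1},$$
and to prove surjectivity one step at a time. Each step is a restriction $\CHs(\Hilb^m(C|p)_{\leq h})\to \CHs(H_{m,h})=\CHs(\Hilb^{m-1}(C|p)_{\leq h-1})$. A composite of surjections is surjective, so it suffices to treat one step. I will do this by induction on $n-h$, describing both sides by generators.

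\textbf{Generators.} By Theorem \ref{thm:D=p}, $\Hilb^m(C|p)_{\leq h}$ is an iterated weighted blow-up of $\Sym^m(C)$. Assume inductively that each earlier restriction map is surjective; this is the hypothesis of Theorem \ref{keel} at every earlier stage. Iterating Theorem \ref{keel} then shows that $\CHs(\Hilb^m(C|p)_{\leq h})$ is generated as an algebra over $\CHs(\Sym^m(C))$ by the exceptional classes $\epsilon_{m,k}$, $h+1\leq k\leq m$. The same holds for $H_{m,h}$ with $m-1$ and $h-1$ in place of $m$ and $h$. So it is enough to check two things.
\begin{enumerate}
\item Each generator $\epsilon_{m-1,k-1}$ of the target is the restriction of some class. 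This is the identity $\epsilon_{m,k}|_{H_{m,k-1}}=\epsilon_{m-1,k-1}$ from Lemma \ref{lem:blwoupnormal}, propagated by proper transform along the blow-ups between levels $k-1$ and $h$. The geometric reason is that a configuration with $k$ points on the last bubble and one point at $\tilde p$ is the same as a configuration of $m-1$ points with $k-1$ on the last bubble. I will also check that every generator $\epsilon_{m-1,k-1}$, $h\leq k-1\leq m-1$, arises this way.
\item The image of $\CHs(\Sym^m(C))$ under pullback followed by restriction contains the pullback of $\CHs(\Sym^{m-1}(C))$. The square formed by $H_{m,h}\subset\Hilb^m(C|p)_{\leq h}$ over $H_{m,m}=s_{m-1,m}(\Sym^{m-1}(C))\subset \Sym^m(C)$ commutes, because the blow-down maps are compatible with adding the point $p$. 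This reduces the claim to surjectivity of $s_{m-1,m}^*:\CHs(\Sym^m(C))\to\CHs(\Sym^{m-1}(C))$.
\end{enumerate}

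\textbf{Main obstacle.} The hard part is surjectivity of $s_{d,N}^*$ on integral Chow rings of symmetric products. I would first try to reduce it to the large-degree case. For $N>2g-2$, $\Sym^N(C)$ is a projective bundle over $\Pic^N(C)\cong \mathrm{Jac}(C)$. Composing with the Abel--Jacobi map, $s_{d,N}$ is compatible with the translation $\Pic^d\to\Pic^N$ given by $D\mapsto D+(N-d)p$. By Lemma \ref{lem:LonHnn}, $\cL$ restricts to $\cL$. Hence whenever $\CHs(\Sym^d(C))$ is generated by $c_1(\cL)$ together with pullbacks from the Jacobian, the map $s_{d,N}^*$ is surjective. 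This holds by the projective bundle formula when $d>2g-2$.

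For small $d$, and in particular when $\Sym^d(C)$ is not a projective bundle, I would try an explicit argument. One could realize $\Sym^d(C)$ inside $\Sym^N(C)$ as the degeneracy locus $\{D : h^0(\cO(D+(N-d)p)\otimes\cO_{(N-d)p})\ \text{drops}\}$, or as the zero locus of a section of an explicit bundle. Alternatively, one could use Collino's description of $\Sym^d(C)$ as built from projective bundles over Brill--Noether loci. The aim in either approach is to write every class on $\Sym^d(C)$ as a polynomial in $c_1(\cL)$ and restrictions from $\Sym^N(C)$.

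If this fails in general, the fallback is to replace $\CHs(\Sym^m(C))$ throughout by its image under the relevant pullbacks. This is consistent with the appearance of $\ker(s^*)$ in Theorem \ref{thm:chowringD}. Steps (1) and (2) then still give surjectivity onto the subring that Theorem \ref{keel} actually requires.
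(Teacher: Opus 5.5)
\textbf{There is a genuine gap: you never prove that pullback along $s$ is surjective on symmetric products, and that is the one input everything hinges on.}

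Your reduction is essentially the paper's. You apply Theorem \ref{keel} inductively to see that $\CHs(\Hilb^n(C|p)_{\leq i})$ is generated by pullbacks from $\Sym^n(C)$ and exceptional classes, and likewise $\CHs(\cZ_{n,i})\cong\CHs(\Hilb^{n-i}(C|p)_{\leq 0})$ with $\Sym^{n-i}(C)$. You observe that $\epsilon_{n,j}\mapsto\epsilon_{n-i,j-i}$. You then use the square over $s_{(n-i),n}$ to reduce to surjectivity of pullback along $s$. Factoring through the hypersurfaces $H_{m,h}$, instead of restricting directly to $\cZ_{n,i}$, changes nothing essential.

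Your projective-bundle argument only covers $\Sym^d(C)$ with $d>2g-2$. The proposition needs every $d=n-i\in\{0,\dots,n\}$. Already $i=n-1$ requires $s_{1,n}^*\colon\CHs(\Sym^n(C))\to\CHs(C)$ to be surjective. In your own scheme, every step with $h=m$, including the base of your induction on $n-h$, is literally $s_{m-1,m}^*\colon\CHs(\Sym^m(C))\to\CHs(H_{m,m})\cong\CHs(\Sym^{m-1}(C))$. So without this input the induction does not even start. The degeneracy-locus and Collino ideas remain a plan, and it is not clear they would give integral generation. The paper supplies the missing step from Kimura--Vistoli. There is a correspondence $\delta_n\colon\Sym^n(C)\vdash\Sym^{n-1}(C)$ with $\delta_n\circ s_{(n-1)n}=\mathrm{id}$. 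Hence $s_{(n-1)n}^*\circ\delta_n^*=\mathrm{id}$, so $s_{(n-1)n}^*$ is surjective for every $n$. Composing these maps gives surjectivity of $s_{(n-i),n}^*$.

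Your fallback does not rescue the argument. Replacing $\CHs(\Sym^m(C))$ by its image proves a strictly weaker statement than the proposition. There is also no ``subring'' version of Theorem \ref{keel}: its hypothesis is surjectivity of $i^*$ onto all of $\CHs(\cZ)$. Without that, $\CHs$ of the blow-up is in general not generated by $\CHs(\cX)$ and the exceptional class. One also needs pushforwards from $\cE$ of classes pulled back from $\cZ$ that do not lift to $\cX$. So the generator description you invoke at every earlier stage of the induction is no longer available.
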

\begin{proof}
    We prove the statement by induction on $2n-i$. If $i=n$, then the restriction map is $$\CHs (\Sym^n(C)) \ra \CHs (\Sym^0(C))=\CHs (*)=\bZ$$ which is clearly surjective. 
    Let us now assume $\iota_{i',n'}$ is surjective for $2n'-i'< 2n-i$ and prove the statement for $2n-i$.
     By Theorem \ref{thm:main}, $\Hilb^n(C|p)_{\le i}$ is a weighted blow-up of $\Hilb^n(C|p)_{\le i+1}.$ By inductive hypothesis on $(n',i') = (n,i+1),$ we can apply Keel's formula (Theorem \ref{keel}), and deduce the Chow ring of $\Hilb^{n}(C|p)_{\leq i}$ is generated by $\mathrm{CH}^\star(\mathrm{Sym}^n(C))$ and $\epsilon_{n,n}, \dots, \epsilon_{n, i+1}.$ Via the isomorphism $\cZ_{n,i}\cong \Hilb^{n-i}(C|p)_{\leq 0},$ and a similar reasoning, the Chow ring of $\cZ_{n,i}$ is generated by $\mathrm{CH}^\star(\mathrm{Sym}^{n-i}(C))$ and $\epsilon_{n-i,n-i}, \dots, \epsilon_{n-i, 1}.$ We observe that the restriction map $\iota_{i,n}^*$ sends $\epsilon_{n,j}$ to $\epsilon_{n-i, j-i}.$ The commutative diagram \[\begin{tikzcd}
	{\cZ_{n,i}} & {\Hilb^{n}(C|p)_{\leq i}} \\
	{\mathrm{Sym}^{n-i}(C)} & {\mathrm{Sym}^{n}(C)}
	\arrow[hook, from=1-1, to=1-2, "\iota_{i,n}"]
	\arrow[from=1-1, to=2-1]
	\arrow[from=1-2, to=2-2]
	\arrow[hook, from=2-1, to=2-2, "s_{(n-i),n}"]
    \end{tikzcd}\] implies that the pullback $\iota_{i,n}^*: \mathrm{CH}^\star(\Hilb^{n}(C|p)_{\leq i})\to \CHs(\cZ_{n,i})$ extends $$s_{(n-i),n}^*: \mathrm{CH}^\star(\mathrm{Sym}^{n}(C))\to \mathrm{CH}^\star(\mathrm{Sym}^{n-i}(C)).$$ Therefore, to prove that the map $\iota_{i,n}^*$ is surjective, it suffices to check that each $s_{(n-i),n}^*$ is surjective. The work of Kimura--Vistoli \cite[§1.8]{KV96} constructed correspondences $\delta_n: \mathrm{Sym}^n(C)\vdash \mathrm{Sym}^{n-1}(C)$ such that $\delta_n\circ s_{(n-1)n} = \mathrm{id},$ so that $s_{(n-1)n}^*: \mathrm{CH}^\star(\mathrm{Sym}^{n}(C))\to \mathrm{CH}^\star(\mathrm{Sym}^{n-1}(C))$ is surjective: see also \cite[Remark 5.4]{Yin2016}. Therefore, the generators of $\mathrm{CH}^\star(\Hilb^{n}(C|p)_{\leq i})$ surjects onto the generators of $\mathrm{CH}^\star(\cZ_{n,i}).$ 
\end{proof}


\begin{theorem}
    
\label{thm:chowringp}
  The Chow ring $\mathrm{CH}^\star(\Hilb^n(C|p)_{\leq i})$, has a presentation 

  $$\mathrm{CH}^\star(\Hilb^n(C|p)_{\leq i}) = \frac{\mathrm{CH}^\star(\mathrm{Sym}^n(C))[\epsilon_{n},\dots, \epsilon_{i+1}]}{\left\{\begin{array}{c}
   \text{for each }j = n, \dots, i+1: Q_{n,j}(\epsilon_{j})(\epsilon_{n},\dots,\epsilon_{j+1}); \ \ker(s_{(n-j)n}^*) \cdot \epsilon_{j}; \\ \text{for each } k = n-j, \dots, 1: Q_{n-j,k}(\epsilon_{n},\dots, \epsilon_{j+k+1}, \epsilon_{j+k})\cdot \epsilon_{n-j} 
  \end{array}\right\}}$$ where $\epsilon_i=[\cE_i]$ is the class of the exceptional divisor of the $(n-i+1)$-th weighted blow-up and corresponds to the locus parameterizing $i$ points supported on the last bubble.  
\end{theorem}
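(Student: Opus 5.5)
The plan is to argue by descending induction on $i$, starting from $i=n$ and iterating the weighted Keel formula (Theorem \ref{keel}) once for each weighted blow-up $\Hilb^n(C|p)_{\leq j-1}\to\Hilb^n(C|p)_{\leq j}$ given by Theorem \ref{thm:main}. The base case $i=n$ is trivial: $\Hilb^n(C|p)_{\leq n}=\Sym^n(C)$, and the presentation has no generators and no relations. For the inductive step, assume the presentation holds for $\Hilb^n(C|p)_{\leq i+1}$. By Theorem \ref{thm:D=p}, $\Hilb^n(C|p)_{\leq i}$ is the weighted blow-up of $\Hilb^n(C|p)_{\leq i+1}$ along $\cZ_{n,i+1}$ with weights $(1,\dots,i+1)$. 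By the surjectivity proposition just proved, $\iota_{i+1,n}^*$ is surjective, so Theorem \ref{keel} applies. It gives
\[
\CHs(\Hilb^n(C|p)_{\leq i})\cong \CHs(\Hilb^n(C|p)_{\leq i+1})[\epsilon_{i+1}]\big/\bigl(\epsilon_{i+1}\cdot\ker(\iota_{i+1,n}^*),\ Q(\epsilon_{i+1})\bigr),
\]
where $Q$ is any lift of $c^{\mathbb{G}_m}_{\mathrm{top}}(\cN_{\cZ_{n,i+1}})$ whose constant term is $[\cZ_{n,i+1}]$.

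Next I identify the lift $Q$. The corollary computing the equivariant top Chern class, together with Definition \ref{defn:Qmh}, expresses $c^{\mathbb{G}_m}_{\mathrm{top}}$ as $Q_{n,i+1}(\epsilon_{n-i-1,\bullet},t)$ on $\cZ_{n,i+1}\cong\Hilb^{n-i-1}(C|p)_{\leq 0}$. Since $\iota^*_{i+1,n}$ sends $\epsilon_{n,j}$ to $\epsilon_{n-i-1,j-i-1}$, the polynomial $Q_{n,i+1}(\epsilon_n,\dots,\epsilon_{i+2},t)$, with the $\epsilon_{n,j}$ on $\Hilb^n(C|p)_{\leq i+1}$ substituted, is a lift. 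Its constant term equals $[\cZ_{n,i+1}]$. To see this I would use that $\cZ_{n,i+1}$ is the transverse intersection of the nested hypersurfaces $H_{n-k,i+1-k}$, so its class is the product of their first Chern classes, and these are exactly the factors at $t=0$ by Lemma \ref{lem:blwoupnormal}. This contributes the relation $Q_{n,i+1}(\epsilon_{i+1})(\epsilon_n,\dots,\epsilon_{i+2})$.

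The main obstacle is describing $\ker(\iota_{i+1,n}^*)$ in terms of the presentation, so that $\epsilon_{i+1}\cdot\ker$ becomes the listed relations. By the inductive hypothesis applied to $\cZ_{n,i+1}\cong\Hilb^{n-i-1}(C|p)_{\leq0}$, the target ring is $\CHs(\Sym^{n-i-1}C)[\epsilon'_{n-i-1},\dots,\epsilon'_1]$ modulo its own relations. The map $\iota^*$ is compatible with $s^*_{(n-i-1)n}$ on $\CHs(\Sym^nC)$ and shifts the indices of the $\epsilon$'s. So I expect the kernel to be generated by three kinds of elements:
\begin{enumerate}
\item $\ker s^*_{(n-i-1)n}$, which gives the relation $\ker(s^*)\cdot\epsilon_{i+1}$;
\item preimages of the defining relations of the target, namely the $Q_{n-i-1,k}$ expressions evaluated on the shifted $\epsilon$'s, which give the third family of relations multiplied by $\epsilon_{i+1}$;
\item relations already holding in $\CHs(\Hilb^n(C|p)_{\leq i+1})$.
\end{enumerate}
To make this precise, I would check on generators that $\iota^*$ maps the presentation of the source onto that of the target. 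I would then argue that any kernel element is congruent, modulo the ideal generated by the lifted target relations and $\ker s^*$, to a polynomial whose image vanishes formally. For this I would use a section of $s^*$ (the Kimura--Vistoli correspondence) and the fact that the $\epsilon$'s map to free generators. Finally, I collect the relations from all steps $j=n,\dots,i+1$ and reindex them, matching the form in the statement.
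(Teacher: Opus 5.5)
Your route is the paper's. You iterate the weighted Keel formula (Theorem \ref{keel}) along the blow-ups of Theorem \ref{thm:D=p}. You compute $\ker\iota^*_{i+1,n}$ by pulling back the presentation of $\cZ_{n,i+1}\cong\Hilb^{n-i-1}(C|p)_{\leq 0}$ along the surjection $\CHs(\Sym^n C)[\epsilon_{n,n},\dots,\epsilon_{n,i+2}]\to\CHs(\Sym^{n-i-1} C)[\epsilon_{n-i-1,n-i-1},\dots,\epsilon_{n-i-1,1}]$ extending $s^*_{(n-i-1),n}$; this is exactly Lemma \ref{lem:rings}. You then discard redundant products with $\epsilon_{i+1}$. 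Your check that the constant term of the lift equals $[\cZ_{n,i+1}]$, via the flag of nested divisors and the projection formula, verifies a hypothesis of Theorem \ref{keel} that the paper leaves implicit.

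The step that fails as written is the induction. A descending induction on $i$ with $n$ fixed does not give you the presentation of $\Hilb^{n-i-1}(C|p)_{\leq 0}$. You invoke that presentation as the ``inductive hypothesis'' when describing the target of $\iota^*_{i+1,n}$, so you must induct on $n$ as well. The paper does this by inducting over $(n,n-i)$ in lexicographic order, which is legitimate because $n-i-1<n$.

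Your item (3) also needs to be made precise. Set $m=n-i-1$ and choose the lifts of $\ker s^*_{(m-j),m}$ inside $\ker s^*_{(m-j),n}$. This is possible since $s^*_{(m-j),n}=s^*_{(m-j),m}\circ s^*_{m,n}$ and $s^*_{m,n}$ is surjective. Then the lifts of the target relations $\ker(s^*_{(m-j),m})\cdot\epsilon_{m,j}$ and $Q_{m-j,k}(\dots)\cdot\epsilon_{m,j}$ become, after the shift $j\mapsto j+i+1$, exactly relations already present in the inductive presentation of $\Hilb^n(C|p)_{\leq i+1}$. Multiplying them by $\epsilon_{i+1}$ therefore adds nothing.

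Finally, watch the reindexing. Your item (2) produces $Q_{n-j,k}(\epsilon_n,\dots,\epsilon_{j+k})\cdot\epsilon_j$ at $j=i+1$, i.e.\ the third family with multiplier $\epsilon_j$. The printed multiplier $\epsilon_{n-j}$ is a misprint: for $n=3$, $i=1$ it would involve $\epsilon_1$, which is not a generator. The redundancy argument above only closes up with $\epsilon_j$.
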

\begin{example}
  When $i = n-1,$ the indices in the list are $j = n.$ Since $n-j = 0,$ all the $Q_{n-j,k}=0,$ so we get $$\mathrm{CH}^\star(\mathrm{Sym}^n(C))[\epsilon_{n,n}]/(Q_{n,n}(\epsilon_{n,n}),\ker (s_{0n}^*)\epsilon_{n,n}).$$
\end{example}
\begin{proof}

For the purpose of the proof, let us rewrite the presentation with the double indexes as in Definition \ref{def: doubleindexes}, so that we can more easily apply induction. With the refined notation, the desired formula reads 

$$\mathrm{CH}^\star(\Hilb^n(C|p)_{\leq i}) = \frac{\mathrm{CH}^\star(\mathrm{Sym}^n(C))[\epsilon_{n,n},\dots, \epsilon_{n,i+1}]}{\left\{\begin{array}{c}
  \text{for each } j = n, \dots, i+1: Q_{n,j}(\epsilon_{n,n},\dots,\epsilon_{n,j+1}, \epsilon_{n,j}); \ \ker(s_{(n-j)n}^*) \cdot \epsilon_{n,j};\\ \text{for each } k = n-j, \dots, 1: Q_{n-j,k}(\epsilon_{n,n},\dots, \epsilon_{n,j+k+1}, \epsilon_{n,j+k})\cdot \epsilon_{n,n-j},  
  \end{array}\right\}}.$$

  We use lexicographic ordering on $(n, n-i)$ and run induction on this totally ordered set. When $n-i = 0,$ the indexing set of $j$ runs from $n, n+1$ which we treat as the empty set, hence the inductive hypothesis on the row $\{(n,0)\}$ holds.

  We present $\Hilb^n(C|p)_{\leq i},$ as blow-up of $\Hilb^n(C|p)_{\leq (i+1)}$ which, by inductive hypothesis, has a presentation $$\CHs(\Hilb^n(C|p)_{\leq (i+1)}) = \frac{\mathrm{CH}^\star(\mathrm{Sym}^n(C))[\epsilon_{n,n},\dots, \epsilon_{n,i+2}]}{\left\{\begin{array}{c}
  \text{for each } j = n, \dots, i+2: Q_{n,j}(\epsilon_{n,n},\dots,\epsilon_{n,j+1}, \epsilon_{n,j}); \  \ker(s_{(n-j)n}^*) \cdot \epsilon_{n,j};\\\text{for each } k = n-j, \dots, 1: Q_{n-j,k}(\epsilon_{n,n},\dots, \epsilon_{n,j+k+1}, \epsilon_{n,j+k})\cdot \epsilon_{n,n-j}, 
  \end{array}\right\}}$$ 

  Applying Keel's formula, we get \[\mathrm{CH}^\star(\Hilb^n(C|p)_{\leq i}) = \frac{\mathrm{CH}^\star(\Hilb^n(C|p)_{\leq i+1})}{(Q_{n,i+1}(\epsilon_{n,i+1}); \ \ker \iota^*_{i+1,n} \cdot \epsilon_{n,i+1})},\] where $\iota_{i+1,n}: \mathcal{Z}_{n,i+1}\to \Hilb^{n}(C|p)_{\leq i+1}$ is the closed embedding. To perform the inductive step, we determine $\ker \iota^*_{i+1,n}.$
  
  In the following, let $m:=n-(i+1).$ By inductive hypothesis, the substack $\mathcal{Z}_{n,i+1}\cong \Hilb^{m}(C|p)_{\leq 0}$ has presentation as $$\CHs(\Hilb^m(C|p)_{\le 0})=\frac{\mathrm{CH}^\star(\mathrm{Sym}^{m}(C))[\epsilon_{m,m},\dots, \epsilon_{m,1}]}{\left\{\begin{array}{c}
   \text{for each } j = m, \dots, 1: Q_{m,j}(\epsilon_{m,m},\dots,\epsilon_{m,j+1}, \epsilon_{m,j}); \  \ker(s_{(m-j),m}^*) \cdot \epsilon_{m,j};\\\text{for each } k = m-j, \dots, 1: Q_{m-j,k}(\epsilon_{m,m},\dots, \epsilon_{m,j+k+1}, \epsilon_{m,j+k})\epsilon_{m,m-j}
  \end{array}\right\}},$$ and the map $$\iota_{m,n}^*: \mathrm{CH}^\star(\Hilb^n(C|p)_{\leq (i+1)})\to \mathrm{CH}^\star(\Hilb^{m}(C|p)_{\leq 0})$$ extends $$s_{m,n}^*: \mathrm{CH}^\star(\mathrm{Sym}^n(C))\to \mathrm{CH}^\star(\mathrm{Sym}^{m}(C))$$ by sending $\epsilon_{n,j}\mapsto \epsilon_{m, j-(i+1)}.$ By Lemma \ref{lem:rings} below, the kernel of the map is generated by $\ker s_{m,n}^*$ and lifts of the relations in the presentation of $\mathrm{CH}^\star(\Hilb^{m}(C|p)_{\leq 0})$ given above.
  
  We observe that for each $j= m, \dots, 1,$ the restriction map $\iota_{m,n}^*$ sends $$Q_{m, j}(\epsilon_{n, j+(i+1)})(\epsilon_{n,n},\dots, \epsilon_{n, j+i+2})\mapsto Q_{m,j}(\epsilon_{m,j})(\epsilon_{m,m},\dots,\epsilon_{m,j+1}),$$ {$$\ker(s_{(\tilde{n}-j)n}^*)\epsilon_{n,j+(i+1)}\mapsto \ker(s_{(\tilde{n}-j)\tilde{n}}^*)\epsilon_{\tilde{n},j}.$$} 
  Given such a $j,$ for each $k = m-j,\dots, 1,$ $\iota_{m,n}^*$ sends $$Q_{m-j, k}(\epsilon_{n, j+k+(i+1)})(\epsilon_{n,n},\dots, \epsilon_{n, j+k+i+2})\epsilon_{n,n-j}\mapsto Q_{m-j,k}(\epsilon_{m,j+k})(\epsilon_{m,m},\dots, \epsilon_{m,j+k+1})\epsilon_{m,m-j}.$$ Therefore, $\ker \iota_{i+1,n}^*$ is generated by
  
$$\left\{\begin{array}{c}
    \ker s_{m,n}^*;  \text{ for each } j= m, \dots, 1:  Q_{m, j}(\epsilon_{n,n},\dots, \epsilon_{n, j+i+2}, \epsilon_{n, j+(i+1)});  \ker(s_{(m-j)n}^*)\cdot \epsilon_{n,j+(i+1);}\\
    \text{for each }k = m-j,\dots, 1:  Q_{m-j, k}(\epsilon_{n,n},\dots, \epsilon_{n, j+k+i+2}, \epsilon_{n, j+k+(i+1)}) \cdot \epsilon_{n,n-j}
  \end{array} \right\}.$$
  
  The new relations in $\mathrm{CH}^\star(\Hilb^n(C|p)_{\leq i})$ introduced by the weighted blow-up are hence generated by 
  $$\left\{\begin{array}{c}
    Q_{n,i+1}(\epsilon_{n,n},\dots, \epsilon_{n,i+2}, \epsilon_{n,i+1}); \ \ker s_{m,n}^* \cdot \epsilon_{n,i+1};
    \\
     \text{for each }j=m, \dots, 1:  Q_{m, j}(\epsilon_{n,n},\dots, \epsilon_{n, j+i+2}, \epsilon_{n, j+(i+1)})\cdot\epsilon_{n,i+1}; \  \ker(s_{(m-j)n}^*)\epsilon_{n,j+(i+1)}\cdot \epsilon_{n,i+1};\\
    \text{for each }k = m-j,\dots, 1:  Q_{m-j, k}(\epsilon_{n,n},\dots, \epsilon_{n, j+k+i+2}, \epsilon_{n, j+k+(i+1)})\cdot \epsilon_{n,n-j} \cdot \epsilon_{n,i+1}
  \end{array} \right\}$$
  
  We notice that among the above terms,  $Q_{m-j, k}(\epsilon_{n,n},\dots, \epsilon_{n, j+k+i+2}, \epsilon_{n, j+k+(i+1)}) \cdot \epsilon_{n,n-j} \cdot \epsilon_{n,i+1}$ lies in the ideal generated by $Q_{m-j, k}(\epsilon_{n,n},\dots, \epsilon_{n, j+k+i+2}, \epsilon_{n, j+k+(i+1)}) \cdot \epsilon_{n,n-j},$ which is already part of relations in $\mathrm{CH}^\star(\Hilb^n(C|p)_{\leq i+1})$ by inductive hypothesis.
  
  Similarly, $\ker s^*_{(n-j),n}\cdot \epsilon_{n,j+(i+1)}$ is a relation in $\mathrm{CH}^\star(\Hilb^n(C|p)_{\leq i+1}).$ The rest of the relations are precisely the relations corresponding to $j = i+1$ (and $k = m,\dots, 1$) as desired.
\end{proof}

In the proof, we have used the following elementary fact about rings and ideals.
\begin{lemma}\label{lem:rings}
  Let $\tilde{\phi}: R\to S$ be a surjection of rings, and let $I\subset S$ be an ideal such that $\langle b_1,\dots b_n\rangle = I.$ Pick any $a_i\in R$ such that $\tilde{\phi}(a_i) = b_i,$ then $\tilde{\phi}^{-1}(I) = \langle \ker \tilde{\phi}, a_1,\dots,a_n\rangle.$
\end{lemma}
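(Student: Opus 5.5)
The inclusion $\langle \ker\tilde{\phi}, a_1,\dots,a_n\rangle\subseteq \tilde{\phi}^{-1}(I)$ is immediate. Elements of $\ker\tilde{\phi}$ map to $0\in I$. Each $a_i$ maps to $b_i\in I$. Since $\tilde{\phi}^{-1}(I)$ is an ideal of $R$ (the preimage of an ideal under a ring homomorphism), it contains the ideal these elements generate.

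For the reverse inclusion, take $x\in \tilde{\phi}^{-1}(I)$, so $\tilde{\phi}(x)\in I=\langle b_1,\dots,b_n\rangle$. Write $\tilde{\phi}(x)=\sum_i s_i b_i$ with $s_i\in S$. Surjectivity of $\tilde{\phi}$ is used exactly here: choose $r_i\in R$ with $\tilde{\phi}(r_i)=s_i$. Then
\[
\tilde{\phi}\Big(x-\sum_i r_i a_i\Big)=\tilde{\phi}(x)-\sum_i s_i b_i=0,
\]
so $x-\sum_i r_i a_i\in\ker\tilde{\phi}$. Hence $x\in \langle \ker\tilde{\phi}, a_1,\dots,a_n\rangle$.

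There is no real obstacle. The only point needing care is that surjectivity is essential for lifting the coefficients $s_i$. In the application in the proof of Theorem \ref{thm:chowringp}, this is supplied by the preceding Proposition on surjectivity of $\iota_{i,n}^*$.
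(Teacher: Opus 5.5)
Your proof is correct and follows essentially the same argument as the paper. The easy inclusion holds because $\tilde{\phi}^{-1}(I)$ is an ideal containing $\ker\tilde{\phi}$ and the $a_i$; for the reverse inclusion you lift the coefficients $s_i$ via surjectivity and observe that $x-\sum_i r_i a_i\in\ker\tilde{\phi}$.
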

\begin{proof}
  Firstly, $\tilde{\phi}^{-1}(I)$ is an ideal, so $b_i\in \tilde{\phi}^{-1}(I), \ker \tilde{\phi}\subset \tilde{\phi}^{-1}(I)$ implies that $\langle \ker \tilde{\phi}, a_1,\dots,a_n\rangle\subset \tilde{\phi}^{-1}(I).$

  Suppose $\ell \in \tilde{\phi}^{-1}(I),$ then write $\tilde{\phi}(\ell) = s_1 b_1 + \dots + s_nb_n.$ Pick any $r_i\in R$ such that $\tilde{\phi}(r_i) = s_i,$ then $$\tilde{\phi}(\sum_{i=1}^n r_i a_i) = \sum_{i=1}^n s_i b_i = \tilde{\phi}(\ell),$$ so $(\sum_{i=1}^n r_i a_i)-\ell\in \ker \tilde{\phi},$ therefore $\ell\in\langle  \ker \tilde{\phi},a_1,\dots, a_n\rangle.$
\end{proof}

\subsection{The Chow ring of $\textbf{\text{Hilb}}^{\boldsymbol{n}}{\boldsymbol{(C|D)}}_{\le \bi}$}

When the divisor $D$ has more than one point, iterating the above calculations gives a presentation of $\mathrm{CH}^\star(\Hilb^n(C|D)_{\leq \bi}).$

Let us start with adapting the Definitions \ref{defn:Qmh} and \ref{defn:sij} to when there are more than one marking present.

\begin{definition}\label{defn:sij}
    For $p_r \in D$, $j\geq i,$ define $$s_{i,j}^{(p_r)}: \Sym^i(C) \ra \Sym^j(C)$$ as the map given by $Z\mapsto Z + (j-i)p_r.$
\end{definition}

The point $p_r$ specifies a hypersurface $H_{n,n}^{(p_r)}\subset \Sym^n(C)$ defined by one point lying on $p_r$ and a corresponding line bundle $\cL_{p_r}$ which behaves as specified by Lemma \ref{lem:LonHnn}. Then we can generalize definition \ref{defn:Qmh} as follows.
 
 \begin{definition} We define the polynomial in $\CHs(\Sym^n(C))[t_m,\dots,t_h]$
 $$Q_{m,h}^{(r)}(t_{m},\dots,t_{h+1},t_h) :=\prod_{k=1}^h (kt_h + c_1(\mathcal{L}_{p_r})-\sum_{j=1}^{m-h}(k+j)t_{j+h})$$ for $r = 1,\dots, \ell$.
\end{definition}

\begin{definition}
    For $j\ge i_r,$ denote $\epsilon_j^{(r)}=[\cE_j^{(r)}]$ as the classes of the exceptional divisors of the weighted blow-ups of Theorem \ref{thm:main} needed to get to $\Hilb^n(C|D)_{\leq \bi}$. They correspond to the {closures of} loci parameterizing length-$n$ subschemes where exactly $i$ points are supported on the last bubble along $p_r$.
\end{definition}

\begin{remark}
    Let $\bi$ be a vector with $r$-th entry equal to $h+1,$ and let $\cZ^{(r)}_{h+1}\subset \Hilb^n(C|D)_{\leq \bi}$ be the closed substack of length-$n$ subschemes where the endpoint along $p_r$ has support $h+1.$ Then we have $c^{\Gm}_{\mathrm{top}}(\cN_{\cZ^{(r)}_{h+1}} \Hilb^n(C|D)_{\leq \bi}) = Q_{n,h}^{(r)}(\epsilon^{(r)}_n, \dots, \epsilon^{(r)}_{h+1}, t).$
\end{remark}

\begin{customthm}{D}\label{thm:chowringD}
    The Chow ring $\mathrm{CH}^\star(\Hilb^n(C|D)_{\leq \bi})$ has a presentation 
  
  $$\mathrm{CH}^\star(\Hilb^n(C|D)_{\leq \bi}) = \frac{\mathrm{CH}^\star(\mathrm{Sym}^n(C))[\epsilon^{(r)}_{j}\mid i_r+1\leq j\leq n, r = 1,\dots, \ell]}{\left\{\begin{array}{c}
   \text{for each }j = n, \dots, i_r+1:\  Q_{n,j}^{(r)}(\epsilon^{(r)}_{n},\dots,\epsilon^{(r)}_{j}); \\ \ker({(s^{(p_r)}_{(n-j), n})}^*) \cdot \epsilon^{(r)}_{j};\\ \text{for each }k = n-j, \dots, 1: \ Q_{n-j,k}^{(r)}(\epsilon^{(r)}_{n},\dots, \epsilon^{(r)}_{j+k})\cdot\epsilon^{(r)}_{n-j} 
  \end{array}\right\}_{r = 1,\dots, \ell}}.$$
\end{customthm}

\begin{remark}
    The formula does not depend on the order of the blow-ups, because for $j\neq k,$ the blow-up centers $\cZ_{\boldsymbol{n}, \boldsymbol{j}-1_j}$ and $\cZ_{\boldsymbol{n}, \boldsymbol{j}-1_k}$ intersect transversely. Therefore, when either of the centers is blown up, the total transform of the other agrees with its proper transform.
\end{remark}


\section{The boundary strata of $\Hilb^{{n}}{{(C|D)}}$}\label{subsec:strat}
The exceptional divisors $\epsilon_{j}^{(r)}$ intersect transversely and give $\Hilb^{n}(C|D)$ a divisorial logarithmic structure. The boundary strata of $\Hilb^{n}(C|D)$ are indexed by the lengths of support on each component. We record this by tuples $(m,\underline{\boldsymbol{\nu}})  = (m, \boldsymbol{\nu}_1, ,\dots ,\boldsymbol{\nu}_\ell),$ where \begin{enumerate}
    \item $m\in \mathbb{Z}_{\geq 0}$ denotes the length of the subscheme supported on the interior of the curve $C\setminus D,$
    \item for $1\leq i\leq \ell,$ $\boldsymbol{\nu}_i$ is a (possibly empty) sequence of positive integers that records the lengths of subschemes supported on irreducible components from the bubble adjacent to the interior (along $p_i$) to the one containing the endpoint $\tilde{p}_i$. We denote the length of $\boldsymbol{\nu}_i$ as $k_i.$ 
\end{enumerate}

Such a tuple $(m,\underline{\boldsymbol{\nu}})$ specifies a locally closed substack $\Hilb^{n}(C|D)_{\leq \bi}^{(m,\underline{\boldsymbol{\nu}})}\subseteq \Hilb^{n}(C|D)$ parametrizing subschemes on expansions with the prescribed lengths of support at components. See Figure \ref{fig:partition} for an example.

\begin{figure}[htp]
\resizebox{.2\textwidth}{!}{
\begin{tikzpicture}[x=0.75pt,y=0.75pt,yscale=-1,xscale=1]

\draw [color={rgb, 255:red, 74; green, 144; blue, 226 }  ,draw opacity=1 ]   (87.04,12.97) -- (65.3,71.28) ;
\draw [color={rgb, 255:red, 74; green, 144; blue, 226 }  ,draw opacity=1 ]   (27.73,10) -- (5.99,68.32) ;
\draw [color={rgb, 255:red, 0; green, 0; blue, 0 }  ,draw opacity=1 ]   (9.94,23.84) -- (91.98,23.84) ;
\draw  [fill={rgb, 255:red, 0; green, 0; blue, 0 }  ,fill opacity=1 ] (84.08,24.33) .. controls (84.08,23.51) and (83.41,22.85) .. (82.59,22.85) .. controls (81.78,22.85) and (81.11,23.51) .. (81.11,24.33) .. controls (81.11,25.15) and (81.78,25.82) .. (82.59,25.82) .. controls (83.41,25.82) and (84.08,25.15) .. (84.08,24.33) -- cycle ;
\draw  [fill={rgb, 255:red, 0; green, 0; blue, 0 }  ,fill opacity=1 ] (23.78,24.33) .. controls (23.78,23.51) and (23.12,22.85) .. (22.3,22.85) .. controls (21.48,22.85) and (20.82,23.51) .. (20.82,24.33) .. controls (20.82,25.15) and (21.48,25.82) .. (22.3,25.82) .. controls (23.12,25.82) and (23.78,25.15) .. (23.78,24.33) -- cycle ;
\draw  [color={rgb, 255:red, 208; green, 2; blue, 27 }  ,draw opacity=1 ][fill={rgb, 255:red, 208; green, 2; blue, 27 }  ,fill opacity=1 ] (19.8,36.22) -- (20.59,37.01) -- (17.65,39.95) -- (20.59,42.89) -- (19.8,43.68) -- (16.86,40.74) -- (13.92,43.68) -- (13.13,42.89) -- (16.07,39.95) -- (13.13,37.01) -- (13.92,36.22) -- (16.86,39.16) -- cycle ;
\draw [color={rgb, 255:red, 74; green, 144; blue, 226 }  ,draw opacity=1 ]   (5,42.62) -- (27.73,94.02) ;
\draw  [color={rgb, 255:red, 208; green, 2; blue, 27 }  ,draw opacity=1 ][fill={rgb, 255:red, 208; green, 2; blue, 27 }  ,fill opacity=1 ] (23.24,73.26) -- (24.03,74.05) -- (21.09,76.99) -- (24.03,79.93) -- (23.24,80.72) -- (20.3,77.78) -- (17.36,80.72) -- (16.56,79.93) -- (19.5,76.99) -- (16.56,74.05) -- (17.36,73.26) -- (20.3,76.2) -- cycle ;
\draw  [color={rgb, 255:red, 208; green, 2; blue, 27 }  ,draw opacity=1 ][fill={rgb, 255:red, 208; green, 2; blue, 27 }  ,fill opacity=1 ] (17.36,60.86) -- (18.15,61.65) -- (15.21,64.59) -- (18.15,67.53) -- (17.36,68.32) -- (14.42,65.38) -- (11.48,68.32) -- (10.68,67.53) -- (13.62,64.59) -- (10.68,61.65) -- (11.48,60.86) -- (14.42,63.8) -- cycle ;
\draw  [color={rgb, 255:red, 208; green, 2; blue, 27 }  ,draw opacity=1 ][fill={rgb, 255:red, 208; green, 2; blue, 27 }  ,fill opacity=1 ] (74.69,49.98) -- (75.48,50.78) -- (72.54,53.71) -- (75.48,56.65) -- (74.69,57.45) -- (71.75,54.51) -- (68.81,57.45) -- (68.02,56.65) -- (70.96,53.71) -- (68.02,50.78) -- (68.81,49.98) -- (71.75,52.92) -- cycle ;
\draw  [fill={rgb, 255:red, 0; green, 0; blue, 0 }  ,fill opacity=1 ] (50.96,23.84) .. controls (50.96,23.02) and (50.3,22.36) .. (49.48,22.36) .. controls (48.66,22.36) and (48,23.02) .. (48,23.84) .. controls (48,24.66) and (48.66,25.32) .. (49.48,25.32) .. controls (50.3,25.32) and (50.96,24.66) .. (50.96,23.84) -- cycle ;
\draw  [color={rgb, 255:red, 208; green, 2; blue, 27 }  ,draw opacity=1 ][fill={rgb, 255:red, 208; green, 2; blue, 27 }  ,fill opacity=1 ] (67.25,20.25) -- (68.05,21.05) -- (65.11,23.99) -- (68.05,26.93) -- (67.25,27.72) -- (64.31,24.78) -- (61.37,27.72) -- (60.58,26.93) -- (63.52,23.99) -- (60.58,21.05) -- (61.37,20.25) -- (64.31,23.19) -- cycle ;

\draw (13,11.4) node [anchor=north west][inner sep=0.75pt]  [font=\tiny]  {$p_{1}$};
\draw (43,11.4) node [anchor=north west][inner sep=0.75pt]  [font=\tiny]  {$p_{2}$};
\draw (88,11.4) node [anchor=north west][inner sep=0.75pt]  [font=\tiny]  {$p_{3}$};

\end{tikzpicture}

}
\caption{A point in $\Hilb^5(C|D)^{1, ((1,2), \varnothing, (1))}.$ Here, $D = p_1 + p_2 + p_3$ and the partitions are ordered accordingly.}
\label{fig:partition}
\end{figure}

To describe the substack, we recall that the $\mathbb{C}^\star$-action on $\Sym^i(\mathbb{A}^1)\cong \mathbb{A}^i$ induced by the $\mathbb{C}^\star$-action on $\mathbb{A}^1$ has weights $(1,\dots, i),$ so that $[(\Sym^i(\mathbb{A}^1)\setminus 0)/\mathbb{C}^\star]\cong \mathcal{P}(1,\dots, i),$ and the coordinates $[e_1:\dots: e_i]$ on $\mathcal{P}(1,\dots, i)$ correspond to the elementary symmetric polynomials on $i$ variables. Restricting the $\mathbb{C}^\star$-action to $\mathbb{C}^\star$ and $\Sym^i(\mathbb{C}^\star),$ the quotient stack $[\Sym^i(\mathbb{C}^\star)/\mathbb{C}^\star]$ is isomorphic to $\mathcal{P}(1,\dots, i)\setminus \mathbb{V}(e_i).$


We have an isomorphism \[\Hilb^{n}(C|D)^{(m,\underline{\boldsymbol{\nu}})}\cong \Sym^{m}(C\setminus D)\times \prod_{i=1}^{\ell}\prod_{j = 1}^{k_i}[\mathrm{Sym}^{\nu_i^{(j)}}(\mathbb{C}^\star)/\mathbb{C}^\star].\]

The product description allows us to classify the stabilizer groups at closed points in $\Hilb^{n}(C|D).$ We recall that the stabilizer of a point $[a_1:\dots:a_i]\in \mathcal{P}(1,\dots, i)$ is cyclic group of order $\mathrm{gcd}(k\in \{1,\dots, i\}: a_k\neq 0)$ embedded as roots of unity in $\mathbb{C}^\star$. Thus, the stabilizer group at each point is a product of cyclic groups, and the cyclic group factor coming from the bubble specified by $(i,j)\in \{1,\dots, \ell\}\times \{1,\dots, k_i-1\}$ must have order dividing $\nu_{i}^{(j)}.$

We also relate the locally closed substacks $\Hilb^{n}(C|D)^{(m,\underline{\boldsymbol{\nu}})}$ to the exceptional divisors introduced by weighted blow-ups. For $1\leq j\leq k_i,$ let $N_{i,j}:=\sum_{k=k_i-j+1}^{k_i}\nu_{i}^{(k)},$ so that $N_{i,j}$ is the total length of subscheme supported on the first $j$ bubbles from the endpoint. The closure of $\Hilb^{n}(C|D)^{(m,\underline{\boldsymbol{\nu}})}$ in $\Hilb^n(C|D)$ is a smooth, regularly embedded closed substack of codimension $\sum_{i=1}^\ell k_i,$ and its cycle class in $A^{\star}(\Hilb^n(C|D))$ has class $\prod_{i=1}^{\ell}\prod_{j=1}^{k_i}\epsilon^{(i)}_{n, N_{i,j}}.$


    

\section{Generating function of Euler characteristics}\label{sec:chi}
We determine the Euler characteristics of $\Hilb^n(C|D)$ by calculating their classes in the Grothendieck ring of varieties. We use the locally closed stratification on $\mathrm{Hilb}^n(C|D)$ itself, which is along the lines of §\ref{subsec:strat} and complementary to the weighted blow-up perspective that we have been taking previously.

Let $Z_C(t)= \sum_{m\geq 0} \mathrm{Sym}^m(C) t^m\in K_0(\mathsf{Var})[\![t]\!]$ be the motivic Zeta function of $C.$ We determine classes of $\Hilb^n(C|D)$ in terms of $Z_C(t).$

\begin{customthm}{E}\label{thm:chi}
     We have $$\sum_{n\geq 0}\Hilb^n(C|D) t^n = Z_C(t)\left(\frac{(1-\mathbb{L}t)(1-t)}{1-(\mathbb{L}+1)t}\right)^\ell.$$
\end{customthm}

\begin{proof}
    From the previous section, $\Hilb^n(C|D)$ is stratified by \[\Hilb^{n}(C|D)^{(m,\underline{\boldsymbol{\nu}})}\cong \Sym^{m}(C\setminus D)\times \prod_{i=1}^{\ell}\prod_{j = 1}^{k_i}[\mathrm{Sym}^{\nu_i^{(j)}}(\mathbb{C}^\star)/\mathbb{C}^\star]\] for all tuples $(m,\underline{\boldsymbol{\nu}})$ where $0\leq m \leq n,$ and $m+\sum_{i=1}^\ell |\boldsymbol{\nu}_i|=n.$ Therefore, \begin{align*}\sum_{n\geq 0}\Hilb^n(C|D) t^n & = \sum_{n\geq 0}\sum_{\substack{(m, \underline{\boldsymbol{\nu}}):\\ m+\sum_{i=1}^\ell |\boldsymbol{\nu}_i|=n}}[\Hilb^{n}(C|D)^{(m,\underline{\boldsymbol{\nu}})}]\cdot t^{n}\\
    & = \sum_{(m, \underline{\boldsymbol{\nu}})}(\Sym^{m}(C\setminus D)\cdot t^m)\cdot \prod_{i=1}^{\ell}\left(\sum_{\underline{\boldsymbol{\nu}}}\prod_{j = 1}^{k_i}[\mathrm{Sym}^{\nu_i^{(j)}}(\mathbb{C}^\star)/\mathbb{C}^\star]\cdot t^{\nu_i^{(j)}}\right)\\ & = \left(\sum_{m\geq 0}\Sym^{m}(C\setminus D)\cdot t^m\right)\left(\sum_{k = 0}\left(\sum_{j=1}^\infty [\Sym^j \mathbb{C}^\star/\mathbb{C}^\star]\cdot t^j\right)^k\right)^\ell\\ & = \left(\sum_{m\geq 0}\Sym^{m}(C\setminus D)\cdot t^m\right)\frac{1}{\left(1-\sum_{j = 1}^{\infty} [\Sym^j \mathbb{C}^\star/\mathbb{C}^\star]\cdot t^j \right)^\ell}\end{align*}

    Since $Z_{(\text{-})}(t)$ is motivic, we have $Z_{C}(t) = Z_{C\setminus D}(t)\left(Z_{\mathrm{pt}}(t)\right)^\ell = Z_{C\setminus D}(t)\cdot (1-t)^{-\ell}.$ Also, the quotient stack $[\mathrm{Sym}^{\ell}(\mathbb{C}^\star)/\mathbb{C}^\star]$ has $\mathbb{A}^{\ell-1}$ as coarse moduli space, so $[\mathrm{Sym}^{j}(\mathbb{C}^\star)/\mathbb{C}^\star] = [\mathbb{L}^{j-1}]\in K_0(\mathsf{Var}).$ Using these two facts, the generating function simplifies to $${(1-t)^{\ell}\cdot Z_C(t)}\cdot {\left(\frac{1-(\mathbb{L}+1)t}{1-\mathbb{L}t}\right)^{-\ell}}$$ as desired.
\end{proof}

When $C = \mathbb{P}^1,$ the motivic zeta function is $$\sum_{m\geq 0}\mathbb{P}^m\cdot t^m = \frac{1}{(1-t)(1-\mathbb{L}t)},$$ allowing us to completely determine the generating functions in this case. 
\begin{corollary}
  We have $$\sum_{n\geq 0}\Hilb^n(\mathbb{P}^1|0+\infty)t^n = \frac{1-t}{1-\mathbb{L}t}\frac{1}{\left(1-\frac{t}{1-\mathbb{L}t}\right)^2} = \frac{(1-t)(1-\mathbb{L}t)}{(1- (\mathbb{L}+1) t)^2},$$ $$\sum_{n\geq 0}\Hilb^n(\mathbb{P}^1|0)t^n  = \frac{1}{1-(\mathbb{L}+1)t}.$$
\end{corollary}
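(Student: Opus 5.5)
The corollary is a direct specialization of Theorem \ref{thm:chi}, so the plan is to substitute the motivic zeta function of $\mathbb{P}^1$ and simplify, first with $\ell = 2$ ($D = 0+\infty$) and then with $\ell=1$ ($D = 0$).

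\emph{Step 1: the zeta function.} Since $\Sym^m(\mathbb{P}^1)\cong \mathbb{P}^m$, we have $[\Sym^m(\mathbb{P}^1)] = 1+\mathbb{L}+\dots+\mathbb{L}^m$. Comparing coefficients of $t^m$ in the geometric series expansion gives
$$Z_{\mathbb{P}^1}(t)=\sum_{m\ge 0}\Big(\sum_{k=0}^m \mathbb{L}^k\Big)t^m = \frac{1}{(1-t)(1-\mathbb{L}t)}.$$

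\emph{Step 2: the case $D=0+\infty$.} It is convenient to go through the intermediate expression obtained in the proof of Theorem \ref{thm:chi}, namely $Z_{C\setminus D}(t)\,\big(1-\sum_{j\ge1}\mathbb{L}^{j-1}t^j\big)^{-\ell}$. Here $C\setminus D=\mathbb{G}_m$, and motivic additivity gives
$$Z_{\mathbb{G}_m}(t)=Z_{\mathbb{P}^1}(t)(1-t)^2=\frac{1-t}{1-\mathbb{L}t}.$$
Summing the geometric series gives $\sum_{j\ge1}\mathbb{L}^{j-1}t^j = \frac{t}{1-\mathbb{L}t}$. Substituting these with $\ell=2$ yields the first displayed expression. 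Clearing denominators, $1-\frac{t}{1-\mathbb{L}t}=\frac{1-(\mathbb{L}+1)t}{1-\mathbb{L}t}$, so the expression becomes
$$\frac{1-t}{1-\mathbb{L}t}\cdot\frac{(1-\mathbb{L}t)^2}{(1-(\mathbb{L}+1)t)^2}=\frac{(1-t)(1-\mathbb{L}t)}{(1-(\mathbb{L}+1)t)^2}.$$
As a consistency check, the same result follows directly from Theorem \ref{thm:chi}: $Z_{\mathbb{P}^1}(t)\,\frac{(1-\mathbb{L}t)^2(1-t)^2}{(1-(\mathbb{L}+1)t)^2}$ simplifies to the same expression.

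\emph{Step 3: the case $D=0$.} Take $\ell=1$ in Theorem \ref{thm:chi}. The factors $(1-t)(1-\mathbb{L}t)$ cancel against $Z_{\mathbb{P}^1}(t)$, leaving $\frac{1}{1-(\mathbb{L}+1)t}$.

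There is no real obstacle here; the only points needing care are the identity $Z_{\mathbb{G}_m}(t)=Z_{\mathbb{P}^1}(t)(1-t)^2$ (additivity of $Z$ together with $Z_{\mathrm{pt}}(t)=(1-t)^{-1}$) and the algebraic simplification.
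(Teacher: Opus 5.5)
Your proposal is correct and is essentially the paper's argument: substitute $Z_{\mathbb{P}^1}(t)=\frac{1}{(1-t)(1-\mathbb{L}t)}$ into Theorem~\ref{thm:chi}. As you observe, the first displayed expression is exactly the intermediate form $Z_{\mathbb{G}_m}(t)\bigl(1-\frac{t}{1-\mathbb{L}t}\bigr)^{-2}$ from that theorem's proof, and your identity $Z_{\mathbb{G}_m}(t)=Z_{\mathbb{P}^1}(t)(1-t)^2$ and the simplifications for $\ell=2$ and $\ell=1$ all check out.
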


\begin{remark}
  In other words, $[\Hilb^n(\mathbb{P}^1|0)] = [(\mathbb{P}^1)^n]$ in the Grothendieck ring of varieties. This is because as toric stacks, they have the same number of cones in each dimension.
\end{remark}

We recall that by Macdonald's formula \cite{Macdonald_1962}, the motivic zeta function $Z_C(t)$ specializes to

\begin{tabular}{|c|c|c|c|}
  \hline 
  Invariant & $\Sym^n(C)$ & $\Hilb^n(C|D)$ & Ring\\
  \hline Serre characteristics
   & $\displaystyle \frac{(1-t)^{[H^1(C)]}}{(1-t)(1-\mathbb{L}t)}$ & $ \displaystyle \frac{(1-t)^{[H^1(C)]}}{(1-t)(1-\mathbb{L} t)}\left(\frac{(1-\mathbb{L}t)(1-t)}{1-(\mathbb{L}+1)t}\right)^\ell$ & $K_0(\mathsf{MHS})[\![t]\!]$\\
  \hline  \begin{tabular}{c}
       Hodge--Deligne  \\
        polynomials
  \end{tabular}   & $\displaystyle \frac{(1-ut)^g(1-vt)^g}{(1-t)(1-uvt)}$ & $\displaystyle \frac{(1-ut)^g(1-vt)^g}{(1-t)(1-uvt)}\left(\frac{(1-uvt)(1-t)}{1-(uv+1)t}\right)^\ell$ & $\mathbb{Z}[u,v][\![t]\!]$\\
  \hline \begin{tabular}{c}
       Poincaré \\
        polynomials
  \end{tabular} & $\displaystyle \frac{(1-xt)^{2g}}{(1-t)(1-x^2t)}$ & $\displaystyle \frac{(1-xt)^{2g}}{(1-t)(1-x^2t)}\left(\frac{(1-x^2t)(1-t)}{1-(x^2+1)t}\right)^\ell$ & $\mathbb{Z}[x][\![t]\!]$\\
  \hline Euler characteristics & $\displaystyle (1-t)^{2g-2}$ & $\displaystyle (1-t)^{2g-2} \left(\frac{(1-t)^2}{1-2t}\right)^{\ell}$ & $\mathbb{Z}[\![t]\!]$\\
  \hline
\end{tabular}

\appendix
\section{Toric structure of $\Hilb^n(\PP^1|0+\infty)$ via Chow quotients}
\begin{center}
    by Dhruv Ranganathan
\end{center}

Corollary \ref{cor:P1} of this paper states that that the logarithmic Hilbert scheme $\Hilb^n(\PP^1|0+\infty)$ is toric; indeed, it is presented as a sequence of toric blowups of a toric variety. A closely related variant of this space is a well-known toric variety: the toric variety associated with the secondary polytope of the length-$n$ interval. See results of Kennedy-Hunt \cite{kh21} and also \cite[§6]{MR20}. Using this, one can check that the logarithmic Hilbert scheme $\Hilb^n(\mathbb{P}^1|0+\infty)$ is toric. Since $\Hilb^n(\PP^1)$ is $\PP^n$, which is also toric, one might guess the intermediate case $\Hilb^n(\mathbb{P}^1|0)$. We briefly sketch this and leave many details to the reader.

Consider the ``rubber" logarithmic Hilbert scheme $\Hilb^n_{\mathrm{rub}}(\mathbb{P}^1|0+\infty)$, parameterizing cycles on expansions of $\mathbb{P}^1$ along $0$ and $\infty$, but where two points are identified if they differ by $\mathbb{C}^*$-scaling in the main component. The moduli problem is represented by a Deligne--Mumford stack with logarithmic structure, by the same methods as \cite{MR20}.

Separately, the linear system $\mathbb{P}^n$ of degree-$n$ divisors on $\mathbb{P}^1$. There is a $\mathbb{C}^*$-scaling action on $\mathbb{P}^n$ induced by the standard $\mathbb{C}^*$-action on $\PP^1$. Let $H$ denote the (normalized) Chow quotient of $\mathbb{P}^n$ by this action. The Chow quotient parameterizes isomorphism classes of $\mathbb{C}^*$-equivariant degenerations of $\mathbb{P}^1$ with a degree-$n$ divisor. This follows from, for instance, the ``finite case" of Alexeev's work on complete semi-abelian moduli \cite[§2.10-12]{Alexeev2002}, or by work of Ascher--Molcho \cite{AM16}. As such, there is a moduli map:
	\[h:\Hilb^n_{\mathrm{rub}}(\mathbb{P}^1|0+\infty) \to H\]
\begin{proposition} The morphism $h$ is an isomorphism on coarse moduli spaces. \end{proposition}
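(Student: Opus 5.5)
The plan is to show that the coarse moduli space of $\Hilb^n_{\mathrm{rub}}(\PP^1|0+\infty)$ and $H$ are both normal proper varieties, and that $h$ induces a bijection on geometric points. Zariski's main theorem then does the rest. Normality of $H$ holds because $H$ is the normalized Chow quotient. For the rubber space, the same chart argument as in Proposition \ref{cor:Hilbsm} shows that it is a smooth DM stack: over the rubber stack of expansions, the map is an open immersion into a relative symmetric product. Its coarse space is therefore normal. Properness of the rubber space should follow from the valuative criterion, arguing exactly as for $\Hilb^n(C|D)$ via expansions (or via the Hassett-space description of Section \ref{subsec:Hilbquot}, taking a quotient by $\mathbb{C}^*$ on the main component). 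Since $H$ is separated, the coarse map $\bar h$ is then proper.

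Next I would check birationality. Both spaces contain the open locus of divisors supported on $\mathbb{C}^*$, up to scaling, namely $[\Sym^n(\mathbb{C}^*)/\mathbb{C}^*]$. On this locus $h$ is the identity: a generic $\mathbb{C}^*$-orbit closure in $\PP^n$ is a generic point of the Chow quotient, and the corresponding degeneration of $(\PP^1,Z)$ is trivial.

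The key step is injectivity, in fact bijectivity, on closed points. Here I would use the description of points of $H$ as $\mathbb{C}^*$-equivariant degenerations. By Kapranov / Alexeev \cite{Alexeev2002}, a point of the Chow quotient is a broken orbit cycle: a chain of orbit closures $\overline{O_1}\cup\cdots\cup\overline{O_k}$ in $\PP^n$ that join a point of the fixed locus over $0$ to one over $\infty$ (appropriately normalized). Each orbit $O_j$ is a divisor on $\mathbb{C}^*$ of some length $\nu^{(j)}$, taken up to scaling. The remaining points sit at $0$ and $\infty$, and they are absorbed into the neighboring components. Concretely, the torus fixed points of $\PP^n=\Sym^n\PP^1$ are the divisors $a\cdot 0+(n-a)\cdot\infty$, and successive orbits in a broken chain link these limits. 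This matches exactly the strata of Section \ref{subsec:strat}: a chain of bubbles carrying lengths $(\nu^{(1)},\dots,\nu^{(k)})$, each component mod its rubber torus, with log flatness meaning that no point sits at a node or at an endpoint.

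I would make the bijection explicit in both directions. Given $(\tilde C,Z)$, contract component $j$ and take the $\mathbb{C}^*$-orbit closure of $Z_j$ plus the remaining points, pushed to $0$ or $\infty$. Conversely, a broken orbit determines its bubbles. In both directions, stability (every component carries a point) corresponds to the condition that the orbits are not fixed points.

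The main obstacle I expect is the careful matching of the components of the broken orbit cycle with the chain, including multiplicities. The Chow quotient records cycles with multiplicities, and I need every component to appear with multiplicity one. That needs a reason. The plan is to argue it toric-combinatorially: $H$ is the toric variety of the fiber fan (secondary fan) of the projection of the simplex of $\PP^n$ to $\mathbb{R}$. Via Corollary \ref{cor:P1}, the rubber space is toric too, with fan given by a quotient of the fan of $\Hilb^n(\PP^1|0+\infty)$. If the fans coincide, $\bar h$ is a toric isomorphism. Failing that, I would fall back on bijectivity plus normality and apply Zariski's main theorem to the birational, proper, quasi-finite map $\bar h$.
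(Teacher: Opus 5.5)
Your overall strategy is the paper's. The paper also concludes by Zariski's main theorem, using that $h$ is finite (read off from the moduli interpretation), that it is birational (both interiors are $n$ points on $\mathbb{C}^*$ up to scaling), and that $H$ is normal as a normalized Chow quotient. Your ``proper plus quasi-finite'' check is just an unpacking of finiteness. Normality of the source is more than you need, since integrality suffices.

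However, the step you single out as the main obstacle rests on a false claim. The components of points of $H$ do \emph{not} all appear with multiplicity one. Take the divisor $Z=x\cdot\mu_n$ on a single component. Its $\mathbb{C}^*$-orbit closure is the line in $\PP^n=\Sym^n\PP^1$ joining the fixed points $n\cdot 0$ and $n\cdot\infty$. The orbit map $\lambda\mapsto\lambda x\mu_n$ has degree $n$ onto that line, and the corresponding point of $H$ is $n$ times the line. In general, the orbit closure attached to a bubble whose divisor has stabilizer $\mu_d\subset\mathbb{C}^*$ appears with multiplicity $d$. This is exactly the stacky structure of the rubber space, and it becomes invisible on coarse spaces. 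So an attempt to prove multiplicity one would fail.

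You also do not need it. Injectivity of $\bar h$ already follows from the \emph{reduced} support of the cycle: the orbits and the fixed points linking them recover the chain and each component's divisor up to scaling. Surjectivity is automatic from properness and dominance. Your ``fallback'' is therefore the whole proof, and it coincides with the paper's.

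The fan comparison should be dropped. Corollary~\ref{cor:P1} concerns only the non-rubber spaces. In the appendix, toricity of the rubber coarse space is deduced \emph{from} this proposition, so it is not available as an input.
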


\begin{proof} By the moduli interpretation, one easily checks that $h$ is finite. It is also clearly birational, since the interior parameterizes $n$ points on $\mathbb{C}^*$. The Chow quotient is normal by construction, so the map is an isomorphism by Zariski's main theorem. \end{proof}

As mentioned above, the Chow quotient of the linear system is the toric variety associated with the secondary polytope of the length-$n$ interval \cite{KSZ91}. Therefore,

\begin{corollary} The coarse moduli space of $\Hilb^n_{\mathrm{rub}}(\mathbb{P}^1|0+\infty)$ is a toric variety. \end{corollary}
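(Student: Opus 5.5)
The plan is to deduce the statement from the Proposition just proved together with the classical description of Chow quotients of toric varieties by subtori. By the Proposition, $h$ induces an isomorphism between the coarse moduli space of $\Hilb^n_{\mathrm{rub}}(\mathbb{P}^1|0+\infty)$ and the normalized Chow quotient $H$ of $\mathbb{P}^n$ by $\mathbb{C}^*$. It therefore suffices to show that $H$ is a normal toric variety. Once that is done, we identify its fan as the normal fan of the secondary polytope of the length-$n$ interval, following \cite{KSZ91}.

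\textbf{Step 1: the $\mathbb{C}^*$-action comes from a subtorus.} Write $\mathbb{P}^n=\Sym^n(\PP^1)$ in the coordinates given by the coefficients $[c_0:\dots:c_n]$ of a degree-$n$ binary form, as in \S\ref{sec:2P10}. The scaling $[a:b]\mapsto[\lambda a:b]$ acts on $c_k$ with weight $k$. Hence the $\mathbb{C}^*$ in question is the one-parameter subgroup $\lambda\mapsto(1,\lambda,\dots,\lambda^n)$ of the dense torus $T=\mathbb{G}_m^{n+1}/\mathbb{G}_m$ of $\mathbb{P}^n$. Its defining point configuration is $\{0,1,\dots,n\}\subset\mathbb{Z}$, that is, the lattice points of the length-$n$ interval.

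\textbf{Step 2: toric structure on the Chow quotient.} Because $T$ is commutative, the quotient torus $T/\mathbb{C}^*$ acts on the Chow variety of cycles in $\mathbb{P}^n$ and preserves the closure $H$ of the locus of generic orbit closures. On that dense open locus, the generic orbits of $T$ act simply transitively modulo $\mathbb{C}^*$. So $H$ contains a dense $T/\mathbb{C}^*$-orbit, and after normalization it is a normal toric variety.

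\textbf{Step 3: the fan.} To identify the fan, we use the Kapranov--Sturmfels--Zelevinsky theorem \cite{KSZ91}: the normalized Chow quotient of the toric variety $X_A$ by the subtorus determined by $A$ is the toric variety of the fiber polytope, here the secondary polytope of $A=\{0,\dots,n\}$. This matches the moduli description: a torus-fixed point of $H$ is a $\mathbb{C}^*$-equivariant degeneration of $(\mathbb{P}^1,Z)$ into a chain, which corresponds to a regular subdivision of $[0,n]$.

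\textbf{Main obstacle.} The point needing care is the compatibility of two different normalizations. The Chow quotient in \cite{KSZ91} is normalized, while the Proposition concerns coarse spaces of a DM stack. The coarse space of a smooth DM stack is normal, so Zariski's main theorem, already used in the Proposition, transfers normality. Thus no further argument beyond Step 2 is needed, and I expect Step 2 (the torus action on the Chow quotient) to be the only genuinely nontrivial input.
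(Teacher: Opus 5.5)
Your proposal is correct and follows the paper's route. The appendix likewise deduces the corollary from the Proposition ($h$ is an isomorphism on coarse spaces onto the normalized Chow quotient $H$) together with the Kapranov--Sturmfels--Zelevinsky identification of $H$ with the toric variety of the secondary polytope of the length-$n$ interval; your Step 2 only makes explicit why $H$ carries a $T/\mathbb{C}^*$-action with a dense free orbit.

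Two minor points. In Step 3, the relevant Kapranov--Sturmfels--Zelevinsky statement concerns the Chow quotient of $\mathbb{P}^n=\mathbb{P}^{|A|-1}$ (the toric variety of the simplex) by the subtorus determined by $A$; it is not about $X_A$ itself, which is a single orbit closure. Also, your normalization worry is moot: $H$ is already defined as the normalized Chow quotient, and Zariski's main theorem in the Proposition only needs normality of the target.
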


One can adapt this argument to the non-rubber moduli space using the approach of Kennedy-Hunt \cite{kh21}; we do not pursue this here. Nevertheless, inspecting the map \[\Hilb^n(\mathbb{P}^1|0+\infty) \to \Hilb^n_{\mathrm{rub}}(\mathbb{P}^1|0+\infty),\]it is straightforward see that it is a flat family of chains $\mathbb{P}^1$'s, which provides a plausibility argument for the toric-ness of $\Hilb^n(\mathbb{P}^1|0+\infty)$. 

One note of caution is that while $\Hilb^n_{\mathrm{rub}}(\PP1|0+\infty)$ and $H$ are isomorphic, the logarithmic boundary divisor on the former is a strict subset of the toric boundary divisor boundary on the latter. In addition to this, a drawback of this argument is that it seems not to apply in the case of $(\PP^1|0)$, and certainly not to other logarithmic curves.

\bibliographystyle{alpha}
\bibliography{lhwbu.bib}
\,

\end{document}